\documentclass{article}
\usepackage[english]{babel}
\usepackage{graphicx}
\usepackage{textcomp}
\usepackage{amssymb}
\usepackage{amsmath}
\usepackage{amsthm}
\usepackage{setspace}
\usepackage[margin=2.5cm]{geometry}
\usepackage{bm}
\usepackage{centernot}
\usepackage{relsize}
\usepackage{hyperref}
\usepackage{amsmath}
\usepackage{amssymb}
\usepackage{mathtools}
\usepackage{xcolor}
\DeclareFontFamily{U}{mathx}{}
\DeclareFontShape{U}{mathx}{m}{n}{
	<-> mathx10
}{}
\DeclareSymbolFont{mathx}{U}{mathx}{m}{n}
\DeclareMathAccent{\widecheck}{0}{mathx}{"71}
\newtheorem{proposition}{Proposition}
\newtheorem{corollary}{Corollary}
\newtheorem{remark}{Remark}
\newtheorem{definition}{Definition}

\newtheorem{lemma}{Lemma}

\makeatletter
\renewcommand*\env@matrix[1][*\c@MaxMatrixCols c]{%
  \hskip -\arraycolsep
  \let\@ifnextchar\new@ifnextchar
  \array{#1}}
\makeatother

\begin{document}

\title{Very weak solutions of the heat equation with anisotropically singular time-dependent diffusivity}
\author{Zhirayr Avetisyan, Zahra Keyshams, Monire Mikaeili Nia, Michael Ruzhansky}
\date{}
\maketitle
\section{Abstract}
We investigate the heat equation with a time-dependent, anisotropic, and potentially singular diffusivity tensor.
Since weak (in the Sobolev sense) or distributional solutions may not exist in this setting, we employ the framework of very weak solutions to establish the existence and uniqueness of solutions to the heat equation with singular, anisotropic, time-dependent diffusivity.
\section{Introduction}
The concept of a solution to a partial differential equation (PDE) has evolved remarkably in the last century or more, from the classical solutions which satisfied the equation in the normal sense (i.e., pointwise), to the weak solutions in sense of Sobolev, or the weak solutions in the sense of an optimization problem (e.g., Lagrangian), all the way to the distributional solutions in various spaces of distributions, ultradistributions or even Fourier hyperfunctions. While each of these approaches has its advantages, it appears that none of them is capable of handling PDEs with strongest (e.g., distributional) coefficients. The recently proposed very weak solutions seem to be a promising way of addressing such problems.

The concept of very weak solutions of PDEs with singular (i.e., non-smooth) coefficients goes back to the paper \cite{GaRu15} by C. Garetto and M. Ruzhansky, where the authors considered second-order weakly hyperbolic equations with singular time-dependent coefficients. While weak (in the sense of Sobolev) or distributional solutions may not exist, it is established that a certain regularization scheme, called very weak solution, always produces appropriately unique solutions to such equations. Moreover, in case the equation does admit distributional or ultra-distributional solutions, the very weak solutions coincide with them in the appropriate sense. The latter condition, called consistency, is the main advantage of the method of very weak solutions over another widely used approach to singular PDEs called the Colombeau algebras.

Since then, the subject of very weak solutions has gained much momentum, see \cite{ARST21a, ARST21b, ChRT22, Gar20, RY20} for more references. However, to our knowledge the only work where the heat equation with time-dependent singular coefficients is considered is \cite{ARST21}, where the authors address the heat equation with a singular potential. In the present paper we study the existence, uniqueness of very weak solutions of the heat equation without a potential, but with a time-dependent, fully anisotropic thermal diffusivity tensor, which is allowed to be strongly singular. Sharp, non-gradual (i.e., singular) coefficients in PDEs appear in many physically relevant problems, where the medium consists of more than one phases with a sharp transmission interface. In particular, when a phase transition in the medium happens much faster than the diffusion process under consideration, the diffusivity coefficients can be effectively seen to be suffering discontinuities. Such a rapid phase transition may happen during the crystallization of a supercooled fluid, and if the resulting crystal is anisotropic, then a diffusion process in this medium will be described with the kind of equations we handle in this paper. Another interesting situation where such equations may be relevant is cosmology. Our best understanding of the history of early universe is that it has undergone phase transitions, which have dramatically changed its chemical and thermal properties. Moreover, many models of early universe assume that it was anisotropic, which could result in anisotropic phase transitions. Thus, any diffusion process that has happened in the early universe at cosmic scales may have to be described by the kind of equations discussed in this work. Readers interested in PDE-theoretical and Fourier-analytical aspects of anisotropic cosmological models may consult \cite{AvVe13} and references therein. 

The structure of this paper is organized as follows. In Section 2, we present the foundational framework and classical results that underpin the analysis developed later. This section includes several key inequalities that will be instrumental throughout the paper.

Section 3 contains the main contributions of this work. We begin with Preliminaries, introducing the necessary functional and analytical tools. We then formulate the problem in a very weak sense in the first subsection. In the second subsection, then, we construct and study solution operators corresponding to this formulation. In the final subsection, we extend our analysis to the $L^{p}-L^{q}$-setting, highlighting the behavior of solutions in broader function spaces.
The last section of the paper concludes with a list of references.

\textbf{Convention.} 
Throughout this paper,  $C$ is used to represent positive constants whose values may change at every occurrence, which are independent of the main involved parameters. We use the notation $A\lesssim_{\mu}B$ to indicate that there exists a constant $C_{\mu}$, depending only on the parameter $\mu$, such that
$A\leq C_{\mu} B.$
When the dependence on $\mu$
is not essential or is clear from the context, we may write $A\lesssim B$ to simplify the notation to mean $A\leq CB$, for a positive constant $C$.
\section{The classical theory}

In this section, we will introduce the heat equation with anisotropic smooth time-dependent diffusivity, and present its solution by very classical means. The main technical challenge here is the fact that, due to anisotropic evolution, the time-dependence of the diffusivity cannot be factored out, resulting in somewhat more involved expressions. The methods and ideas, however, are classical.

Consider the following partial differential equation,
\begin{equation}
\partial_tu(t,x)-\mathcal{L}_tu(t,x)=f(t,x),\label{heatEq}
\end{equation}
where $\{\mathcal{L}_t\}_{t\in\mathbb{R}_+}$ is a smooth family of partial differential operators given by
$$
\mathcal{L}_tv(x)=\sum_{i,j=1}^n a_{ij}(t)\partial^2_{x_ix_j}v(x),\quad\forall x\in\mathbb{R}^n,\quad\forall t\in\mathbb{R}_+,\quad\forall v\in C^\infty(\mathbb{R}^n),\quad n\in\mathbb{N}.
$$
Here $a\in C^\infty(\mathbb{R}_+,\mathrm{GL}(n))\cap C([0,\infty),\mathrm{GL}(n))$ is a matrix valued smooth function, such that $a(t)$ is a symmetric positive definite matrix for all $t\in\mathbb{R}_+$. We will use the inner product in $\mathbb{R}^n$ to write
$$
\mathcal{L}_t=\langle\partial_x,a(t)\partial_x\rangle,\quad\forall t\in\mathbb{R}_+.
$$
Denote
$$
\Delta\doteq\left\{(s,t)\in[0,+\infty)\times\mathbb{R}_+\;\vline\quad s<t\right\},
$$
then its closure in $\mathbb{R}^2$ will be
$$
\overline{\Delta}=\left\{(s,t)\in[0,+\infty)^2\;\vline\quad s\le t\right\}.
$$
Introduce the matrix valued smooth function $A\in C^\infty(\mathbb{R}_+,\mathrm{GL}(n))$ by
$$
A(t)=\int\limits_0^ta(s)ds,\quad\forall t\in\mathbb{R}_+.
$$
It is clear that $A(t)-A(s)$ is a symmetric positive definite matrix for every $(s,t)\in\Delta$.

Define the function $\mathcal{W}\in\mathcal{S}(\mathbb{R}^n)\otimes C^\infty(\Delta)$ by
$$
\mathcal{W}(x;s,t)\doteq\frac1{\sqrt{(4\pi)^n\det[A(t)-A(s)]}}e^{-\frac14\langle x,[A(t)-A(s)]^{-1}x\rangle},\quad\forall x\in\mathbb{R}^n,\quad\forall(s,t)\in\Delta.
$$
\begin{remark}\label{remarkfourierinverse}
\begin{align}\label{fourierinverse}
	\mathcal{W}(x;s,t) = \mathcal{F}^{-1}\left(e^{-\langle[A(t)-A(s)]\cdot, \cdot \rangle}\right)(x)=\widecheck{\left(e^{-\langle[A(t)-A(s)]\cdot, \cdot \rangle}\right)}(x),~~~\forall x\in \mathbb{R}^{n},~~\forall (s,t)\in \Delta,
\end{align}
where $\mathcal{F}^{-1}$ stands for the inverse Fourier transform. This is because, for a symmetric positive definite matrix $B$ and an arbitrary vector $b$ and a  constant $c$, one can verify through computation and the application of the Gaussian integral that
\begin{align}\label{formulaforinverse}
	\int\limits_{\mathbb{R}^n} e^{-\frac{1}{2}x^{\top}Bx+b^{\top} x+c} dx = \sqrt{\dfrac{(2\pi)^n}{\det B}} e^{\frac{1}{2}b^{\top}B^{-1}b+c}.
\end{align}	
	Here, we have
\begin{align*}
	\mathcal{F}^{-1}\left(e^{-\langle[A(t)-A(s)]\cdot, \cdot\rangle} \right)(x)= (2\pi)^{-n}\int\limits_{\mathbb{R}^n} e^{-\frac{1}{2}\xi^{\top}(2[A(t)-A(s)])\xi+ix.\xi} d\xi,~~~\forall x\in \mathbb{R}^{n},~~~\forall (s,t)\in \Delta.
\end{align*}
Therefore, by letting $B=2[A(t)-A(s)]$ and $b^{\top}=ix$ in \eqref{formulaforinverse} we obtain the result in \eqref{fourierinverse}.
\end{remark}
\begin{proposition}\label{pr1}
The following properties hold:
\begin{itemize}
\item[1.]
$$
\int\limits_{\mathbb{R}^n}\mathcal{W}(x;s,t)dx=1,\quad\forall(s,t)\in\Delta;
$$

\item[2.]
$$
\partial_t\mathcal{W}(x;s,t)-\mathcal{L}_t\mathcal{W}(x;s,t)=0,\quad\forall x\in\mathbb{R}^n,\quad\forall(s,t)\in\Delta;
$$

\item[3.]
$$
\lim_{\epsilon\to0+}\mathcal{W}(x;s,s+\epsilon)=\delta,\quad\forall s\in[0,+\infty);
$$

\item[4.] 
$$
\lim_{\epsilon\to0+}\mathcal{W}(x; t-\epsilon,t)=\delta,\quad\forall t\in(0,+\infty).
$$
\end{itemize}
\end{proposition}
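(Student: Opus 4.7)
All four items flow quickly from the Fourier representation in Remark~\ref{remarkfourierinverse}, $\mathcal{W}(x;s,t)=\mathcal{F}^{-1}\bigl(e^{-\langle[A(t)-A(s)]\cdot,\cdot\rangle}\bigr)(x)$, together with the Gaussian formula \eqref{formulaforinverse}. Writing $\widehat{\mathcal{W}}(\xi;s,t)=e^{-\langle[A(t)-A(s)]\xi,\xi\rangle}$ for the spatial Fourier transform, item~1 is immediate: $\int_{\mathbb{R}^n}\mathcal{W}(x;s,t)\,dx=\widehat{\mathcal{W}}(0;s,t)=1$, or alternatively one applies \eqref{formulaforinverse} directly with $B=\tfrac12[A(t)-A(s)]^{-1}$, $b=0$, $c=0$. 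For item~2, rather than differentiate the explicit Gaussian---whose prefactor involves $\det$ and whose exponent involves $[A(t)-A(s)]^{-1}$---I would work on the Fourier side. The symbol of $\mathcal{L}_t$ is $-\langle a(t)\xi,\xi\rangle$, so $\partial_t\widehat{\mathcal{W}}(\xi;s,t)=-\langle a(t)\xi,\xi\rangle\,\widehat{\mathcal{W}}(\xi;s,t)=\widehat{\mathcal{L}_t\mathcal{W}}(\xi;s,t)$, and inverting the Fourier transform---legitimate since $\mathcal{W}(\cdot;s,t)\in\mathcal{S}(\mathbb{R}^n)$---yields the claim.

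Items~3 and~4 are standard approximate-identity statements, to be interpreted in $\mathcal{S}'(\mathbb{R}^n)$. For item~3, fix $\phi\in\mathcal{S}(\mathbb{R}^n)$ and set $M_\epsilon\doteq A(s+\epsilon)-A(s)$. Since $M_\epsilon$ is symmetric positive definite for $\epsilon>0$, its unique symmetric positive definite square root $\sqrt{M_\epsilon}$ is well defined, and the substitution $x=\sqrt{M_\epsilon}\,y$ reduces the pairing to
\[
\int_{\mathbb{R}^n}\mathcal{W}(x;s,s+\epsilon)\phi(x)\,dx=\frac{1}{(4\pi)^{n/2}}\int_{\mathbb{R}^n}e^{-|y|^2/4}\phi\bigl(\sqrt{M_\epsilon}\,y\bigr)\,dy.
\]
Continuity of $A$ on $[0,+\infty)$ yields $M_\epsilon\to 0$ and hence $\sqrt{M_\epsilon}\to 0$ as $\epsilon\to 0+$, so dominated convergence with dominating function $\|\phi\|_\infty e^{-|y|^2/4}$ gives the limit $\phi(0)$, using $\int_{\mathbb{R}^n}e^{-|y|^2/4}\,dy=(4\pi)^{n/2}$. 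Item~4 is identical with $M_\epsilon$ replaced by $A(t)-A(t-\epsilon)$; the restriction $t>0$ ensures $t-\epsilon\in[0,+\infty)$ for all sufficiently small $\epsilon>0$, which explains the asymmetry between the two limits.

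The main obstacle, such as it is, lies in item~2: differentiating the explicit Gaussian in $t$ directly would produce cumbersome cross-terms from Jacobi's formula for $\partial_t\det$ and from the identity $\partial_t[A(t)-A(s)]^{-1}=-[A(t)-A(s)]^{-1}a(t)[A(t)-A(s)]^{-1}$, all of which must conspire to cancel against $\mathcal{L}_t\mathcal{W}$. Passing to the Fourier side bypasses this computation entirely, and once that trick is adopted the remaining steps are routine.
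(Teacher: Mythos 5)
Your proposal is correct, but it routes two of the four items differently from the paper. For item~1 you and the paper do essentially the same Gaussian computation (your shortcut $\int\mathcal{W}\,dx=\widehat{\mathcal{W}}(0;s,t)=1$ is also fine). For item~2 the paper works entirely on the physical side: it differentiates the explicit kernel, computing $\partial^2_{x_ix_j}\mathcal{W}$ and then $\partial_t\mathcal{W}$ via Jacobi's formula for $\partial_t\det[A(t)-A(s)]$ and the identity for $\partial_t[A(t)-A(s)]^{-1}$, and checks that the two expressions coincide --- exactly the ``cumbersome cross-terms'' you chose to avoid. Your Fourier-side argument ($\partial_t\widehat{\mathcal{W}}=-\langle a(t)\xi,\xi\rangle\widehat{\mathcal{W}}=\widehat{\mathcal{L}_t\mathcal{W}}$) is cleaner and buys a near-trivial verification, at the small price of having to justify interchanging $\partial_t$ with the inverse Fourier integral (differentiation under the integral sign, with the locally uniform dominating bound $\left|\langle a(t)\xi,\xi\rangle\right|e^{-\langle[A(t)-A(s)]\xi,\xi\rangle}$ integrable in $\xi$ on compact $t$-sets away from $s$); you assert this legitimacy but should spell out that one line, since ``$\mathcal{W}(\cdot;s,t)\in\mathcal{S}$'' by itself addresses invertibility rather than the interchange. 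Conversely, for items~3 and~4 the roles are swapped: the paper passes to the Fourier side, moving the inverse transform onto $\check\phi$ and applying dominated convergence with the uniform bound $e^{-\langle[A(s+\epsilon)-A(s)]\xi,\xi\rangle}\le 1$, whereas you stay on the physical side with the substitution $x=\sqrt{M_\epsilon}\,y$ and dominate by $\|\phi\|_\infty e^{-|y|^2/4}$; both are standard approximate-identity arguments of comparable length, and your observation that continuity of $A$ at $s$ (resp.\ at $t$, with $t>0$ guaranteeing $t-\epsilon\ge0$) drives $\sqrt{M_\epsilon}\to0$ is exactly the point the paper's DCT step encodes on the frequency side.
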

\begin{proof}
In order to prove part 1, consider the Gaussian integral as follows:
$$
\int\limits_{\mathbb{R}^n} e^{-\alpha y^{\top}y} dy = \left(\frac{\pi}{\alpha}\right)^{\frac{n}{2}}, ~~\alpha>0.
$$
Here, we have 
\begin{align}\label{gaussian}
	I &\doteq\int\limits_{\mathbb{R}^n} e^{-\frac{1}{4}\langle x, [A(t)-A(s)]^{-1}x\rangle} dx \notag\\
	&= \int\limits_{\mathbb{R}^n} e^{-\frac{1}{4}\langle [A(t)-A(s)]^{-\frac{1}{2}}x, [A(t)-A(s)]^{-\frac{1}{2}}x\rangle} dx,\quad\forall(s,t)\in\Delta.
\end{align}
Note that $[A(t)-A(s)]^{-\frac{1}{2}}$ exists because $A(t)-A(s)$ is a positive definite matrix. Let $y = [A(t)-A(s)]^{-\frac{1}{2}}x$, therefore, $dx = \det[(A(t)-A(s))^{\frac{1}{2}}] dy$.
Continuing the calculation in \eqref{gaussian}, we have
\begin{align*}
	I &= \int\limits_{\mathbb{R}^n} e^{-\frac{1}{4} y^Ty} dy \sqrt{\det[A(t)-A(s)]} = (4\pi)^{\frac{n}{2}} \sqrt{\det[A(t)-A(s)]},\quad\forall(s,t)\in\Delta.
\end{align*}
Hence,
\begin{align*}
	\int\limits_{\mathbb{R}^n}\mathcal{W}(x;s,t)dx= \frac1{\sqrt{(4\pi)^n\det[A(t)-A(s)]}}\, \mathrm{I} =1,\quad\forall(s,t)\in\Delta.
\end{align*}
To prove part 2, note that
$$
\mathcal{L}_t\mathcal{W}(x;s,t)=\sum_{i,j=1}^n a_{ij}(t)\partial^2_{x_ix_j}\mathcal{W}(x;s,t),\quad\forall x\in\mathbb{R}^n,\quad\forall (s,t)\in\Delta.
$$
Therefore, we need to calculate $\partial^2_{x_ix_j}\mathcal{W}(x;s,t)$.
We have
\begin{align}
	\partial_{x_j}\mathcal{W}(x;s,t) &= \frac{1}{\sqrt{(4\pi)^n\det[A(t)-A(s)]}}  \partial_{x_j}\left(e^{-\frac{1}{4}\langle x, [A(t)-A(s)]^{-1}x\rangle}\right) \notag\\ 
	&= -\frac{1}{4} \frac{1}{\sqrt{(4\pi)^n\det[A(t)-A(s)]}} e^{-\frac{1}{4}\langle x, (A(t)-A(s))^{-1}x\rangle} 2 \left((A(t)-A(s))^{-1}x\right)_{j}\notag\\
	&= -\frac{1}{2}\mathcal{W}(x;s,t)\left((A(t)-A(s))^{-1}x\right)_{j},\quad\forall x\in\mathbb{R}^n,\quad\forall(s,t)\in\Delta.
\end{align}
Then, with a similar calculation,
\begin{align}
	&\partial^2_{x_ix_j}\mathcal{W}(x;s,t) = 
	\partial_{x_i}(\partial_{x_j}\mathcal{W}(x;s,t))= \partial_{x_i}\left(-\frac{1}{2}\mathcal{W}(x;s,t)\left((A(t)-A(s))^{-1}x\right)_{j}\right)\notag\\
	&= \frac{1}{4} \mathcal{W}(x;s,t) \left((A(t)-A(s))^{-1}x\right)_{i}\left((A(t)-A(s))^{-1}x\right)_{j} -\frac{1}{2} \mathcal{W}(x;s,t)\left((A(t)-A(s))^{-1}\right)_{i,j},\\
	&\forall x\in\mathbb{R}^n, \forall(s,t)\in\Delta.\notag
\end{align}
Therefore,
\begin{align}\label{LtW} 
	&\mathcal{L}_t\mathcal{W}(x;s,t)\notag\\
	&= \mathcal{W}(x;s,t)\left(-\frac{1}{2} \sum_{i,j=1}^n a_{i,j}(t)\left([A(t)-A(s)]^{-1}\right)_{i,j}+\frac{1}{4}\sum_{i,j=1}^n a_{i,j}(t)\left([A(t)-A(s)]^{-1}x\right)_{i}\left([A(t)-A(s)]^{-1}x\right)_{j}\right)\notag\\
	&=\mathcal{W}(x;s,t)\left(-\frac{1}{2} \operatorname{Tr}\left(a(t)[A(t)-A(s)]^{-1}\right)+\frac{1}{4}\langle[A(t)-A(s)]^{-1}x, a(t)[A(t)-A(s)]^{-1}x\rangle\right),\\
	&(\forall x\in\mathbb{R}^n,\quad \forall(s,t)\in\Delta)\notag.
\end{align}
To complete the proof, we need to calculate 
$\partial_t \mathcal{W}(x;s,t),$ for $ \forall x\in\mathbb{R}^n, \forall(s,t)\in\Delta$.
\begin{align}\label{partialtW}
	\partial_t \mathcal{W}(x;s,t)&=\partial_t\left(\frac{1}{\sqrt{(4\pi)^n\det[A(t)-A(s)]}}\right)e^{-\frac{1}{4}\langle x, [A(t)-A(s)]^{-1}x\rangle}\notag\\
	&+\frac{1}{\sqrt{(4\pi)^n\det[A(t)-A(s)]}}\partial_t\left(e^{-\frac{1}{4}\langle x, [A(t)-A(s)]^{-1}x\rangle}\right)\notag\\
	&=-\frac{1}{2\sqrt{(4\pi)^n}}{\det[A(t)-A(s)]^{-\frac{1}{2}}}\operatorname{Tr}\left([A(t)-A(s)]^{-1}a(t)\right)e^{-\frac{1}{4}\langle x, [A(t)-A(s)]^{-1}x\rangle}\notag\\
	&+\frac{1}{4}\frac{1}{\sqrt{(4\pi)^n\det[A(t)-A(s)]}}e^{-\frac{1}{4}\langle x, [A(t)-A(s)]^{-1}x\rangle}\langle x, [A(t)-A(s)]^{-1}a(t)[A(t)-A(s)]^{-1}x\rangle\notag\\
	&=\mathcal{W}(x; s,t)\left(-\frac{1}{2} \operatorname{Tr}\left([A(t)-A(s)]^{-1}a(t)\right)+\frac{1}{4}\langle [A(t)-A(s)]^{-1}x, a(t)[A(t)-A(s)]^{-1}x\rangle\right).
\end{align}
Comparing \eqref{LtW} and \eqref{partialtW} leads us to $$
\partial_t\mathcal{W}(x;s,t)-\mathcal{L}_t\mathcal{W}(x;s,t)=0,\quad\forall x\in\mathbb{R}^n,~ \forall(s,t)\in\Delta.
$$
In order to prove part $3$, since $\mathcal{W}(\cdot;s,s+\epsilon) \in \mathcal{S}(\mathbb{R}^n)\subset \mathcal{S}'(\mathbb{R}^n)$, for $\forall s\in [0,\infty)$, and for any sufficiently small  $\epsilon>0$, as well as for every $\phi \in \mathcal{S}(\mathbb{R}^n)$,  using Remark \ref{remarkfourierinverse}, we have
\begin{align}
	\lim_{\epsilon\to0+}\mathcal{W}(\cdot;s,s+\epsilon)(\phi) &=\lim_{\epsilon\to0+} \int\limits_{\mathbb{R}^n} \mathcal{W}(x;s,s+\epsilon) \phi(x) dx\notag\\
	&=\lim_{\epsilon\to0+}\int\limits_{\mathbb{R}^n} \widecheck{\left(e^{-\langle [A(s+\epsilon)-A(s)]\xi, \xi \rangle}\right)}(x) \phi(x) \, dx
	\notag\\
	&=\lim_{\epsilon\to0+} \int\limits_{\mathbb{R}^n} \left(e^{-\langle[A(s+\epsilon)-A(s)]\xi, \xi \rangle}\right) \check{\phi}(\xi) d\xi \notag\\
	&=\int\limits_{\mathbb{R}^n} \lim_{\epsilon\to0+} \left(e^{-\langle[A(s+\epsilon)-A(s)]\xi, \xi \rangle}\right) \check{\phi}(\xi) d\xi \notag\\
	&=\int\limits_{\mathbb{R}^n} 1 \check{\phi}(\xi) d\xi= \check{1}(\phi) = \delta(\phi).
\end{align}
Note that by the Dominated Convergence Theorem, we can pass to the limit inside the integral because   $\left(e^{-\langle[A(s+\epsilon)-A(s)]\cdot, \cdot \rangle}\right)$  is a uniformly bounded function $\forall \epsilon>0$ and $\forall s\in [0,\infty)$ as well as  $\check{\phi}\in \mathcal{S}(\mathbb{R}^n)\subset L^{1}(\mathbb{R}^n)$.

The proof of part 4 follows similarly to part 3. For every \( t > 0 \) and sufficiently small \( \epsilon > 0 \), we have \( \mathcal{W}(\cdot; t - \epsilon, t) \in \mathcal{S}(\mathbb{R}^n) \subset \mathcal{S}'(\mathbb{R}^n) \). Therefore, for any \( \phi \in \mathcal{S}(\mathbb{R}^n) \), by applying Remark \ref{remarkfourierinverse}, we obtain 
\begin{align}
	\lim_{\epsilon\to0+}\mathcal{W}(\cdot;t-\epsilon,t)(\phi) &=\lim_{\epsilon\to0+} \int\limits_{\mathbb{R}^n} \mathcal{W}(x;t-\epsilon,t) \phi(x) dx\notag\\
	&=\lim_{\epsilon\to0+}\int\limits_{\mathbb{R}^n} \widecheck{\left(e^{-\langle[A(t)-A(t-\epsilon)]\xi, \xi \rangle}\right)}(x) \phi(x) dx \notag\\
	&=\lim_{\epsilon\to0+} \int\limits_{\mathbb{R}^n} \left(e^{-\langle[A(t)-A(t-\epsilon)]\xi, \xi \rangle}\right) \check{\phi}(\xi) d\xi \notag\\
	&=\int\limits_{\mathbb{R}^n} \lim_{\epsilon\to0+} \left(e^{-\langle[A(t)-A(t-\epsilon)]\xi, \xi \rangle}\right) \check{\phi}(\xi) d\xi \notag\\
	&=\check{1}(\phi) = \delta(\phi).
	\end{align}
\end{proof}

Before proceeding, we first collect several remarks concerning the kernel $\mathcal{W}$, which will play an important role throughout the paper.
\begin{remark}\label{partialtimes}
For every $v\in\mathcal{S}'(\mathbb{R}^n),~ (s,t)\in\Delta ~ \text{and} ~ \alpha, \beta \in \mathbb{N}_{0}$, we have 
\begin{align}
	\frac{\partial^{\alpha+\beta}}{\partial t^\alpha \partial s^{\beta}}\left(\mathcal{W}(\cdot; s,t)\ast v\right) = \left(	\frac{\partial^{\alpha+\beta}}{\partial t^\alpha \partial s^{\beta}}\mathcal{W}(\cdot; s,t)\right)\ast v,
\end{align}
since $\mathcal{W}\in\mathcal{S}(\mathbb{R}^n)\otimes C^\infty(\Delta)$ implies that the mapping 
$$
\Delta\longrightarrow \mathcal{S}(\mathbb{R}^n),
$$
$$
(s,t)\mapsto \mathcal{W}(\cdot;s,t),
$$ 
belongs to $ C^\infty(\Delta,\mathcal{S}(\mathbb{R}^n))$.  
\end{remark}
\begin{remark}\label{partialtimesx}
For every $v\in\mathcal{S}'(\mathbb{R}^n),~ (s,t)\in\Delta$ and $\beta \in \mathbb{N}^n_{0}$ as well as $x\in \mathbb{R}^n$, we have 
\begin{align}
	\frac{\partial^\beta}{\partial_{x_1}^{\beta_{1}}\cdots\partial_ {x_{n}}^{\beta_{n}}}\left(\mathcal{W}(\cdot; s,t)\ast v\right)(x) = \left(	\frac{\partial^\beta}{\partial_{x_1}^{\beta_{1}}\cdots\partial_ {x_{n}}^{\beta_{n}}}\mathcal{W}(\cdot; s,t)\right)\ast v(x),
\end{align}
since $\mathcal{W}\in\mathcal{S}(\mathbb{R}^n)\otimes C^\infty(\Delta)$ implies that the mapping
$$
\mathbb{R}^n\longrightarrow \mathcal{S}(\mathbb{R}^n),
$$
$$
x\mapsto \mathcal{W}(x-\cdot;s,t),
$$ 
belongs to $ C^\infty(\mathbb{R}^{n},\mathcal{S}(\mathbb{R}^n))$.  
\end{remark}
To avoid confusion, we include the following remark to clarify some notations. In this remark, we use Proposition 5.15 of \cite{folland}.

\begin{remark}\label{notation}
A distribution $h\in\mathcal{S}'(\mathbb{R}^n)$ if and only if there exists a finite set of multi-index pairs $F=\{(\alpha_i,\beta_{i})\}_{i=1}^d \subset \mathbb{N}_0^{2n}$ for some $d\in\mathbb{N}$ such that
\begin{align}
	\|h\|_F\doteq\sup_{\phi\in\mathcal{S}(\mathbb{R}^n)}\{|h(\phi)|; ~~ \sum_{i=1}^{d}\|\phi\|_{\alpha_{i},\beta_{i}}\leq1\}<\infty.
\end{align}
Therefore, for every $h\in\mathcal{S}'(\mathbb{R}^n)$, there exists $ F=\{(\alpha_i,\beta_{i})\}_{i=1}^d\subset \mathbb{N}_0^{2n}$ such that for every $\phi\in\mathcal{S}(\mathbb{R}^n)$, one has
$$
|h(\phi)|\leq \|h\|_F \sum_{i=1}^{d}\|\phi\|_{\alpha_{i},\beta_{i}},
$$
where $\|\cdot\|_{\alpha_{i}, \beta_{i}}$ is a seminorm on $\mathcal{S}(\mathbb{R}^n)$.
\end{remark}

Here, we include the following lemma specifically related to the proof of Part 5 of Proposition \ref{pr2}.

\begin{lemma}\label{upperboundderivative}
Let  $M$ be an $n \times n$  symmetric, positive definite, real valued matrix. Let $ \mathbb{N}_0^{n \times n}$ denote the set of all $n \times n$ matrices with entries in $ \mathbb{N}_0$. Suppose $\alpha \in \mathbb{N}_0^{n \times n}$, and denote its rows by $ \alpha_i = (\alpha_{i1}, \ldots, \alpha_{in}) \in \mathbb{N}_0^n$ for each $ i = 1, 2, \ldots, n$.
 Define the function $g$ by
\begin{align}
	g&: \mathbb{R}^n \longrightarrow \mathbb{R},\notag\\
	g&(\xi) \doteq e^{-\langle \xi, M \xi \rangle}, \qquad \forall \xi \in \mathbb{R}^n.
\end{align}
Then, the following inequality holds:
\begin{align}\label{derivatives66}
\left| \partial^\gamma_\xi g(\xi) \right| 
\leq C_\gamma \sum_{\substack{\alpha \in \mathbb{N}_0^{n \times n} \\ \sum_{j=1}^n \alpha_{ij} \leq\gamma_i}} 
\prod_{i,j=1}^n |m_{ij}|^{\alpha_{ij}} 
\prod_{j=1}^n |\xi_j|^{q_j} \, e^{-\langle \xi, M \xi \rangle}, \quad \forall \xi \in \mathbb{R}^n,~~\forall \gamma\in\mathbb{N}^{n}_{0},
\end{align}
where, for each $j = 1, \dots, n$, the exponent $q_j \in \mathbb{N}_0$ is defined by
$$
q_j \doteq \sum_{i=1}^n \alpha_{ij},
$$
and the total degree satisfies
$$
\sum_{j=1}^n q_{j} =  \sum_{j=1}^n \sum_{i=1}^n\alpha_{ij}= |\gamma|.
$$
\end{lemma}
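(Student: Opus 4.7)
The natural approach is induction on $|\gamma|=\gamma_1+\ldots+\gamma_n$, exploiting the identity
\[
\partial_{\xi_k}\bigl(P(\xi)\,e^{-\langle \xi, M\xi\rangle}\bigr) = \bigl(\partial_{\xi_k}P(\xi) - 2(M\xi)_k\,P(\xi)\bigr)\,e^{-\langle \xi, M\xi\rangle}, \qquad (M\xi)_k = \sum_{a=1}^{n} m_{ka}\,\xi_a,
\]
valid for any polynomial $P$. The base case $|\gamma|=0$ is immediate with the zero matrix $\alpha = 0 \in \mathbb{N}_0^{n\times n}$ and $C_0=1$.

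For the inductive step, I would differentiate a representative term $T_\alpha = \prod_{i,j}|m_{ij}|^{\alpha_{ij}}\prod_j|\xi_j|^{q_j}e^{-\langle\xi,M\xi\rangle}$ with $\sum_j\alpha_{ij}\leq\gamma_i$ with respect to $\xi_k$, producing two contributions. Differentiating the exponential factor and using $|(M\xi)_k|\leq\sum_a|m_{ka}||\xi_a|$ yields a sum over $a$ of terms of the form $2|m_{ka}||\xi_a|\,T_\alpha$; each fits the claimed form for $\gamma+e_k$ via the relabeling $\alpha'_{ka}=\alpha_{ka}+1$ (all other entries unchanged), since then $\sum_j\alpha'_{kj}=\sum_j\alpha_{kj}+1\leq\gamma_k+1=(\gamma+e_k)_k$ and the identity $q'_j=\sum_i\alpha'_{ij}$ remains valid with $q'_a=q_a+1$ and $q'_j=q_j$ for $j\neq a$. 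Differentiating the polynomial factor instead gives $q_k|\xi_k|^{q_k-1}\prod_{j\neq k}|\xi_j|^{q_j}\prod|m|^\alpha e^{-\langle\xi,M\xi\rangle}$; applying the elementary bound $|\xi_k|^{q_k-1}\leq 1+|\xi_k|^{q_k+1}$ (valid for $q_k\geq 1$) and absorbing $q_k\leq|\gamma|$ into the new constant $C_{\gamma+e_k}$, this contribution is controlled by a combination of terms of the required form for $\gamma+e_k$ (with suitably chosen $\alpha'$ having $\sum_j\alpha'_{ij}\leq(\gamma+e_k)_i$).

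The main technical obstacle is the combinatorial bookkeeping of the multi-indices $\alpha\in\mathbb{N}_0^{n\times n}$, in particular ensuring simultaneously that the row-sum constraint $\sum_j\alpha'_{ij}\leq(\gamma+e_k)_i$ and the definition $q'_j=\sum_i\alpha'_{ij}$ are preserved through each differentiation, especially in the polynomial-derivative term which decreases the polynomial degree by one. An arguably cleaner route avoids this altogether by using Faà di Bruno's formula: writing $g=e^F$ with $F(\xi)=-\langle\xi,M\xi\rangle$ quadratic, only first- and second-order partials of $F$ are non-zero, so
\[
\partial^\gamma_\xi g(\xi) = g(\xi)\sum_{\pi}\prod_{B\in\pi}\partial^{B} F(\xi),
\]
where $\pi$ ranges over partitions of the multiset of indices encoded by $\gamma$ into blocks of size one or two. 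Singletons $\{k\}$ contribute $-2(M\xi)_k=-2\sum_a m_{ka}\xi_a$ and pairs $\{k,\ell\}$ contribute $-2m_{k\ell}$; taking absolute values, expanding the sums, and collecting monomials by their $m$- and $\xi$-exponents produces a bound of precisely the stated form, with the row-sum constraint $\sum_j\alpha_{ij}\leq\gamma_i$ reflecting that each copy of the index $i$ in $\gamma$ is used at most once as a "row index" for some factor $m_{ij}$.
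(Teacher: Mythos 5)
Your route (induction on $|\gamma|$, with Fa\`a di Bruno as an alternative) is genuinely different from the paper's proof, which computes $\partial_{\xi_i}g$ once, keeps the top-degree product $\prod_{i=1}^n\bigl(\sum_{j=1}^n m_{ij}\xi_j\bigr)^{\gamma_i}$, expands it by the multinomial theorem, and then relaxes the row-sum equality $\sum_j\alpha_{ij}=\gamma_i$ to an inequality in order to account for the remaining terms. However, your inductive step does not close, and the obstruction is precisely the one you flag. First, you cannot differentiate the inequality of the induction hypothesis; you would have to carry an exact signed expansion of $\partial^\gamma_\xi g$ through the induction. Second, and more seriously, the monomials produced by differentiating the polynomial factor (equivalently, the contributions of the second derivatives of $\langle\xi,M\xi\rangle$) have the $\xi_k$-exponent strictly smaller than the column sum $\sum_i\alpha_{ik}$, i.e.\ they violate the rigid coupling $q_j=\sum_i\alpha_{ij}$ demanded in \eqref{derivatives66}. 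The patch $|\xi_k|^{q_k-1}\le 1+|\xi_k|^{q_k+1}$ does not repair this: the piece carrying $|\xi_k|^{q_k+1}$ can be put in the required form only by incrementing some $\alpha_{ik}$, i.e.\ by inserting a factor $|m_{ik}|$ that is not present in the term being estimated, while the piece carrying $|\xi_k|^{0}$ would require deleting factors $|m_{ik}|$ that are present; neither substitute dominates the original term with a constant depending only on $\gamma$.

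Your Fa\`a di Bruno remark is the cleaner way to see the structure, but its conclusion overreaches: singleton blocks contribute $m_{ka}\xi_a$ (coupled), whereas pair blocks contribute $m_{k\ell}$ with no accompanying $\xi_\ell$, so collecting monomials honestly yields a bound in which $q_j\le\sum_i\alpha_{ij}$, not $q_j=\sum_i\alpha_{ij}$. The difference matters where coordinates vanish: for $n=1$ and $\gamma=2$ one has $|g''(0)|=2m$, while every monomial of the form in \eqref{derivatives66} with $\alpha\neq0$ vanishes at $\xi=0$, so no constant $C_2$ independent of $M$ can close the coupled estimate there; this also shows the coupled form cannot be reached by any rearrangement of your argument (the paper's own proof treats these lower-degree terms only by asserting that relaxing the row-sum constraint ``includes'' them after bounding the top-degree term). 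If you carry out the Fa\`a di Bruno computation and state the estimate with $q_j\le\sum_i\alpha_{ij}$, each term containing at least one factor $|m_{ij}|$ and at most $|\gamma|$ powers of $|\xi|$, you obtain a correct bound, and that decoupled form is all that is actually used later in the proof of Proposition \ref{pr2}.
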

\begin{proof}
For a fixed multi-index $\gamma = (\gamma_1, \dots, \gamma_n)$, we compute derivatives of $g(\xi) = e^{-\langle \xi, M \xi \rangle}$. First, observe that
$$
\partial_{\xi_i} g(\xi) = -2 (M \xi)_i \, e^{-\langle \xi, M \xi \rangle} = -2 \left( \sum_{j=1}^n m_{ij} \xi_j \right) e^{-\langle \xi, M \xi \rangle},\quad\forall\xi\in\mathbb{R}^{n},~\forall i=1,2,\cdots,n.
$$
Each derivative $\partial_{\xi_i}$ contributes a linear combination of the form $\sum_j m_{ij} \xi_j$, multiplied by the exponential term. Hence, the repeated differentiation produces a polynomial in $\xi$ of degree $|\gamma|$ times $e^{-\langle \xi, M \xi \rangle}$ whose highest-degree term is
\begin{equation}\label{linearcombination}
\prod_{i=1}^n \left( \sum_{j=1}^n m_{ij} \xi_j \right)^{\gamma_i}e^{-\langle \xi,M\xi\rangle},\quad \forall \xi\in\mathbb{R}^{n}.
\end{equation}
On the other hand, we have 
\begin{equation}\label{derivation55}
\left( \sum_{j=1}^n m_{ij} \xi_j \right)^{\gamma_i} = \sum_{\substack{\alpha_i \in \mathbb{N}_0^n \\ |\alpha_i| = \gamma_i}} \binom{\gamma_i}{\alpha_i} \prod_{j=1}^n m_{ij}^{\alpha_{ij}} \xi_j^{\alpha_{ij}},\quad \forall\xi\in\mathbb{R}^{n},~\forall i=1,2,\cdots,n.
\end{equation}

 Hence, by \eqref{derivation55}, one can see that 
\begin{align}\label{derivative11}
\left| \prod_{i=1}^n \left( \sum_{j=1}^n m_{ij} \xi_j \right)^{\gamma_i}\right|&\lesssim_{\gamma} \prod_{i=1}^{n} \left( \sum_{\substack{
\alpha_{i}=(\alpha_{i1},\cdots,\alpha_{in})\in \mathbb{N}_0^n \\
|\alpha_{i}| = \gamma_{i}
}}\prod_{j=1}^{n} \left| m_{ij}\right| ^{\alpha_{ij}} \left| \xi_j\right| ^{\alpha_{ij}} \right)\notag\\
&=\sum_{\substack{\alpha=(\alpha_{1},\alpha_{2},\cdots,\alpha_{n})\in\mathbb{N}^{n\times n}_{0}\\\sum^{n}_{j=1}\alpha_{ij}=\gamma_{i}}}\prod^{n}_{i,j=1}|m_{ij}|^{\alpha_{ij}}\prod^{n}_{j=1}|\xi_{j}|^{\sum^{n}_{i=1}\alpha_{ij}}.
\end{align}
Hence, by multiplying both sides of \eqref{derivative11} by $e^{-\langle \xi, M\xi \rangle}$, we get 	\begin{align}\label{derivative22}	 \left| \prod_{i=1}^n \left( \sum_{j=1}^n m_{ij} \xi_j \right)^{\gamma_i}\right| e^{-\langle \xi,M\xi\rangle} \lesssim_{\gamma} \sum_{\substack{\alpha=(\alpha_{1},\alpha_{2},\cdots,\alpha_{n})\in\mathbb{N}^{n\times n}_{0}\\\sum^{n}_{j=1}\alpha_{ij}=\gamma_{i}}}\prod^{n}_{i,j=1}|m_{ij}|^{\alpha_{ij}}\prod^{n}_{j=1}|\xi_{j}|^{\sum^{n}_{i=1}\alpha_{ij}}e^{-\langle \xi,M\xi\rangle}.
\end{align}
Therefore, relation \eqref{derivative22} leads us to the following 
\begin{align}\label{derivative44}
\left| \partial^{\gamma}_{\xi} g(\xi) \right| 	\lesssim_{\gamma} \sum_{\substack{\alpha=(\alpha_{1},\alpha_{2},\cdots,\alpha_{n})\in\mathbb{N}^{n\times n}_{0}\\\sum^{n}_{j=1}\alpha_{ij}\leq\gamma_{i}}}\prod^{n}_{i,j=1}|m_{i,j}|^{\alpha_{ij}}\prod^{n}_{j=1}|\xi_{j}|^{\sum^{n}_{i=1}\alpha_{ij}}e^{-\langle \xi,M\xi\rangle}.
\end{align}
To conclude \eqref{derivatives66} from \eqref{derivative44}, if for each $\alpha\in\mathbb{N}^{n\times n}_{0}$ in \eqref{derivative44} we define $q_{j}\doteq \sum^{n}_{i=1}\alpha_{ij}$ for each $j=1,2,\cdots,n$,  we can associate a unique $q=(q_{1},q_{2},\cdots,q_{n})$ with $\sum^{n}_{j=1}q_{j}=\sum^{n}_{i,j=1}\alpha_{ij}=|\gamma|$.  
 Hence, the sum in \eqref{derivative44} includes all the terms from \eqref{derivative11} and possibly more (terms with lower degree). Therefore, we obtain
\begin{equation}\label{derivative33}
\left| \partial^{\gamma}_{\xi} g(\xi) \right| 	\lesssim_{\gamma}\
\sum_{\substack{\alpha=(\alpha_{1},\alpha_{2},\cdots,\alpha_{n})\in\mathbb{N}^{n\times n}_{0}\\\sum^{n}_{j=1}\alpha_{ij}\leq\gamma_{i}}}
\prod_{i,j=1}^n |m_{ij}|^{\alpha_{ij}} 
\prod_{j=1}^n |\xi_j|^{q_j}e^{-\langle \xi,M\xi\rangle},~\forall \xi\in \mathbb{R}^{n},~\forall \gamma\in\mathbb{N}^{n}_{0}.
\end{equation}
Now, the proof is complete.

Note that since $\langle \xi, M\xi\rangle$ is a quadratic form, in the derivatives of  $e^{-\langle \xi, M\xi\rangle}$,
there are no terms in the expansion \eqref{derivative33} for which  ${\alpha_{ij}}=0$, $\forall i,j\in\{1,2,\cdots,n\}$. In other words, at least some powers of $|m_{i,j}|$ must appear in each term of the sum $\forall i,j=1,2,\cdots,n.$
\end{proof}

Here, consider the family of linear operators $\{\operatorname{W}_{s,t}\}_{(s,t)\in\Delta}$, acting as
$$
\operatorname{W}_{s,t}:\mathcal{S}'(\mathbb{R}^n)\to\mathcal{S}(\mathbb{R}^n),\quad\forall(s,t)\in\Delta,
$$

$$
\operatorname{W}_{s,t}v=\mathcal{W}(\cdot;s,t)\ast v,\quad\forall v\in\mathcal{S}'(\mathbb{R}^n),\quad\forall(s,t)\in\Delta.
$$

We state Proposition \ref{pr2} concerning this family of operators. 

\begin{proposition}\label{pr2}
The following properties hold:
\begin{itemize}
\item[1.]
$$
\operatorname{W}_{s,t}\in C(\mathcal{S}'(\mathbb{R}^n),\mathcal{S}(\mathbb{R}^n))\,\cap\,C(\mathcal{S}(\mathbb{R}^n),\mathcal{S}(\mathbb{R}^n)),\quad\forall(s,t)\in\Delta;
$$

\item[2.] For every $v\in\mathcal{S}'(\mathbb{R}^n)$, the map $(s,t)\mapsto\operatorname{W}_{s,t}v$ belongs to
$$
C^\infty(\Delta,\mathcal{S}(\mathbb{R}^n));
$$

\item[3.]
$$
\operatorname{W}_{r,s}\operatorname{W}_{s,t}=\operatorname{W}_{r,t},\quad\forall(r,s),(s,t)\in\Delta;
$$

\item[4.]
$$
\partial_t\operatorname{W}_{s,t}v-\mathcal{L}_t\operatorname{W}_{s,t}v=0,\quad\forall v\in\mathcal{S}'(\mathbb{R}^n),\quad\forall(s,t)\in\Delta;
$$
\item[5.]
$$
\lim_{(\delta,\epsilon)\to(0+,0+)}\operatorname{W}_{s-\delta,s+\epsilon}v=v,\quad\forall v\in\mathcal{S}(\mathbb{R}^n),\quad\forall s\in[0,+\infty);
$$
\item[6.]
$$
\lim_{(\delta,\epsilon)\to(0+,0+)}\operatorname{W}_{s-\delta,s+\epsilon}v=v,\quad\forall v\in\mathcal{S}'(\mathbb{R}^n),\quad\forall s\in[0,+\infty).
$$
\end{itemize}
\end{proposition}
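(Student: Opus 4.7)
The plan is to read every property off the Fourier-side identity $\widehat{\mathcal{W}}(\xi;s,t)=e^{-\langle\xi,[A(t)-A(s)]\xi\rangle}$ from Remark~\ref{remarkfourierinverse}, together with standard convolution calculus and the bookkeeping provided by Remarks~\ref{partialtimes}, \ref{partialtimesx} and Lemma~\ref{upperboundderivative}. For Parts~1 and~2, since $\mathcal{W}(\cdot;s,t)\in\mathcal{S}(\mathbb{R}^n)$, convolution defines a continuous endomorphism of $\mathcal{S}(\mathbb{R}^n)$ via the Peetre-type estimate $1+|x|\le(1+|x-y|)(1+|y|)$ applied inside each Schwartz seminorm. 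For the action on $v\in\mathcal{S}'(\mathbb{R}^n)$, I would write $(W_{s,t}v)(x)=\langle v,\mathcal{W}(x-\cdot;s,t)\rangle$, bound the pairing through the finitely many Schwartz seminorms supplied by Remark~\ref{notation}, and split $x^\alpha=\sum_\beta\binom{\alpha}{\beta}(x-y)^{\alpha-\beta}y^\beta$ so that the $y^\beta$ factor is absorbed into $v$ and the $(x-y)^{\alpha-\beta}$ factor is absorbed by the Gaussian decay of $\mathcal{W}$. Part~2 then follows because $(s,t)\mapsto\mathcal{W}(\cdot;s,t)$ lies in $C^\infty(\Delta,\mathcal{S}(\mathbb{R}^n))$ by definition, and Remark~\ref{partialtimes} pushes every $\partial_s^\alpha\partial_t^\beta$ through the convolution, reducing smoothness of $(s,t)\mapsto W_{s,t}v$ to smoothness of the kernel.

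For Part~3, the convolution theorem reduces the semigroup identity to the elementary Gaussian product
\[
e^{-\langle\xi,[A(s)-A(r)]\xi\rangle}\,e^{-\langle\xi,[A(t)-A(s)]\xi\rangle}=e^{-\langle\xi,[A(t)-A(r)]\xi\rangle},
\]
i.e.\ $\mathcal{W}(\cdot;r,s)*\mathcal{W}(\cdot;s,t)=\mathcal{W}(\cdot;r,t)$, whence $W_{r,s}W_{s,t}=W_{r,t}$ by associativity of convolution of two Schwartz functions with a tempered distribution. For Part~4, Remarks~\ref{partialtimes} and \ref{partialtimesx} allow us to commute both $\partial_t$ and the differential operator $\mathcal{L}_t$ with the convolution, so
\[
\partial_tW_{s,t}v-\mathcal{L}_tW_{s,t}v=\bigl(\partial_t\mathcal{W}(\cdot;s,t)-\mathcal{L}_t\mathcal{W}(\cdot;s,t)\bigr)*v,
\]
and the bracket vanishes identically by Part~2 of Proposition~\ref{pr1}.

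For Parts~5 and~6, I would again pass to the Fourier side and use $\widehat{W_{s-\delta,s+\epsilon}v}(\xi)=e^{-\langle\xi,[A(s+\epsilon)-A(s-\delta)]\xi\rangle}\widehat{v}(\xi)$. Continuity of $A$ at $s$ (inherited from $a\in C([0,\infty),\mathrm{GL}(n))$) gives $A(s+\epsilon)-A(s-\delta)\to 0$, so the multiplier converges pointwise to $1$ while remaining bounded by $1$; Lemma~\ref{upperboundderivative} further bounds each $\xi$-derivative of the multiplier by a polynomial in the matrix entries times the Gaussian, so every derivative tends to $0$ in an $L^1$-integrable fashion against $\widehat{v}\in\mathcal{S}(\mathbb{R}^n)$. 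Dominated convergence on every Schwartz seminorm of the difference, transferred through the Fourier transform, then yields $W_{s-\delta,s+\epsilon}v\to v$ in $\mathcal{S}$ for $v\in\mathcal{S}$. Part~6 is dual: the evenness in $x$ of the Gaussian $\mathcal{W}(\cdot;s,t)$ gives $\langle W_{s-\delta,s+\epsilon}v,\phi\rangle=\langle v,W_{s-\delta,s+\epsilon}\phi\rangle$ for $v\in\mathcal{S}'$ and $\phi\in\mathcal{S}$, and the right-hand side converges to $\langle v,\phi\rangle$ by Part~5 applied to $\phi$.

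The main technical obstacle will be the uniform control needed in Parts~1 and~5: convolution of a Schwartz function with a general tempered distribution is a priori only $C^\infty$ of polynomial growth, so extracting genuine Schwartz seminorm bounds (rather than mere pointwise smoothness) requires the super-exponential decay of the Gaussian and the explicit multi-index derivative estimates of Lemma~\ref{upperboundderivative} to be used in an essential way.
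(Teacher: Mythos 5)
Your proposal matches the paper's proof in all essential respects: seminorm estimates for the convolution operators in Parts 1--2, the Fourier-side Gaussian product for Part 3, commuting $\partial_t$ and $\mathcal{L}_t$ through the convolution and invoking Part 2 of Proposition \ref{pr1} for Part 4, the Fourier-multiplier argument with the Leibniz rule and Lemma \ref{upperboundderivative} for Part 5, and duality via Part 5 for Part 6. The only cosmetic difference is in Part 5, where the paper replaces your dominated-convergence step by the explicit quantitative bound $\left|1-e^{-\langle\xi,M\xi\rangle}\right|\le\langle\xi,M\xi\rangle\le(\epsilon+\delta)\,|\xi|^{2}\max_{\tau\in[0,s+1]}\|a(\tau)\|_{n\times n}$ for the $\gamma=0$ term, which gives an $O(\epsilon+\delta)$ rate on each Schwartz seminorm directly.
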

\begin{proof}
To prove the first part, we demonstrate the validity of the following two statements.
\begin{itemize}
\item[(i)] 
$$
\operatorname{W}_{s,t}\in C(\mathcal{S}(\mathbb{R}^n),\mathcal{S}(\mathbb{R}^n)),\quad\forall(s,t)\in\Delta,
$$
\item[(ii)] 
$$
\operatorname{W}_{s,t}\in C(\mathcal{S}'(\mathbb{R}^n),\mathcal{S}(\mathbb{R}^n))\quad\forall(s,t)\in\Delta.
$$
\end{itemize}
Let $\{\phi_{m}\}_{m=1}^{\infty} \subset \mathcal{S}(\mathbb{R}^n)$ such that $\phi_{m} \xrightarrow{\mathcal{S}(\mathbb{R}^n)} 0$ when $m \rightarrow \infty$. It means that
\begin{equation}\label{assumption}
	\sup_{x \in \mathbb{R}^n} \left|x^\alpha \partial_x^\beta \phi_{m}(x)\right| \rightarrow 0, ~\text{as}\quad m\rightarrow \infty, \quad\forall\alpha, \beta \in \mathbb{N}_{0}^n.
\end{equation}
Therefore, to prove $(i)$ we need to show that for every $\alpha, \beta \in \mathbb{N}_{0}^n$,
$$
\sup_{x \in \mathbb{R}^n} \left|x^\alpha \partial_x^\beta \operatorname{W}_{s,t}\phi_{m}(x)\right| \rightarrow 0, ~\text{as}\quad m\rightarrow \infty, ~~\forall(s,t)\in\Delta,~~ \forall\alpha, \beta \in \mathbb{N}_{0}^n.
$$
We have
\begin{align}\label{c(s(R))}
	&\sup_{x\in \mathbb{R}^n}\left|x^\alpha \partial_x^\beta \operatorname{W}_{s,t}\phi_{m}(x)\right| =\sup_{x\in \mathbb{R}^n}\left|x^\alpha \partial_x^\beta (\phi_{m}\ast\mathcal{W}(\cdot; s,t)(x))\right|\notag\\
	&=\sup_{x\in \mathbb{R}^n} \left|x^\alpha\right|\left|\partial_x^\beta (\phi_{m}\ast\mathcal{W}(\cdot; s,t)(x))\right|=\sup_{x\in \mathbb{R}^n} \left|x^\alpha\right|\left| \int\limits_{\mathbb{R}^n}(\partial_x^\beta\phi_{m}(x-y))\mathcal{W}(y; s,t) dy\right| \notag\\
	&\leq \sup_{x \in \mathbb{R}^n} \int\limits_{\mathbb{R}^n} |x|^{|\alpha|} \left|\partial_x^\beta \phi_{m}(x-y)\right|\mathcal{W}(y; s,t)dy
	= \sup_{x \in \mathbb{R}^n} \int\limits_{\mathbb{R}^n} |x-y+y|^{|\alpha|} \left|\partial_x^\beta \phi_{m}(x-y)\right|\mathcal{W}(y; s,t)dy\notag\\
	&\leq 2^{|\alpha|} \sup_{x \in \mathbb{R}^n} \int\limits_{\mathbb{R}^n} |x-y|^{|\alpha|} \left|\partial_x^\beta \phi_{m}(x-y)\right|\mathcal{W}(y; s,t)dy+
	2^{|\alpha|} \sup_{x \in \mathbb{R}^n} \int\limits_{\mathbb{R}^n} |y|^{|\alpha|} \left|\partial_x^\beta \phi_{m}(x-y)\right|\mathcal{W}(y; s,t)dy\notag\\
	&\leq 2^{|\alpha|} \sup_{x \in \mathbb{R}^n} \int\limits_{\mathbb{R}^n} \left(1+|x-y|^2\right)^{|\alpha|} \left|\partial_x^\beta \phi_{m}(x-y)\right|\mathcal{W}(y; s,t)dy\notag\\
	&+
	2^{|\alpha|} \sup_{x \in \mathbb{R}^n} \int\limits_{\mathbb{R}^n} \left|\partial_x^\beta \phi_{m}(x-y)\right|\left(1+|y|^2\right)^{|\alpha|}\mathcal{W}(y; s,t)dy, \quad\forall (s,t)\in \Delta.
\end{align}
Here, we define
\begin{align}\label{g}
	\mathbb{R}^n&\longrightarrow \mathcal{S}(\mathbb{R}^n),\notag\\
	g_{m}(x)&\doteq\left(1+|x|^2\right)^{|\alpha|} \left|\partial_x^\beta \phi_{m}(x)\right|,~~~ \forall \alpha, \beta \in \mathbb{N}_{0}^n, ~  \forall m \in \mathbb{N},
\end{align}
and
\begin{align}\label{h}
	\mathbb{R}^n&\longrightarrow \mathcal{S}(\mathbb{R}^n),\notag\\
	h(x)&\doteq \left(1+|x|^2\right)^{|\alpha|}\mathcal{W}(x; s,t),~~~\forall \alpha \in \mathbb{N}_{0}^n, ~~ \forall(s,t)\in \Delta.
\end{align}
Using \eqref{g} and \eqref{h} in \eqref{c(s(R))}, we have
\begin{align}\label{convolution}
	\sup_{x\in \mathbb{R}^n} \left|x^\alpha \partial_x^\beta \operatorname{W}_{s,t}\phi_{m}(x)\right| &\leq 2^{|\alpha|} \sup_{x \in \mathbb{R}^n} \left(g_{m}\ast\mathcal{W}(\cdot; s,t)\right)(x)+ 2^{|\alpha|} \sup_{x \in \mathbb{R}^n} \left| \partial_x^\beta\phi_{m}\right|\ast h(x)\notag\\
	&=2^{|\alpha|} \left(\left\|g_{m}\ast \mathcal{W}(\cdot; s,t)\right\|_{\infty}+\left\||\partial_x^\beta\phi_{m}|\ast h\right\|_{\infty} \right)\notag\\
	&\leq 2^{|\alpha|} \left(\left\|g_{m}\right\|_{\infty} \left\|\mathcal{W}(\cdot; s,t)\right\|_{1}+\left\|\partial_x^\beta\phi_{m}\right\|_{\infty}\left\| h\right\|_{1} \right).
\end{align}
Therefore,
\begin{align*}
	\sup_{x\in\mathbb{R}^n} \left|x^\alpha \partial_x^\beta \operatorname{W}_{s,t}\phi_{m}(x)\right|\rightarrow 0,\quad \forall x\in \mathbb{R}^n,~~ \forall(s,t)\in\Delta,~~ \forall\alpha, \beta \in \mathbb{N}_{0}^n,~~\forall m\in \mathbb{N}.
\end{align*}
This is due to the facts that $\left\|\mathcal{W}(\cdot; s,t)\right\|_{1}<\infty, ~\text{and}~  \|h\|_{1} <\infty$, as well as the assumption \eqref{assumption}.

Now, we prove the statement $(ii)$ of part 1.
Let $\{T_{m}\}_{m=1}^{\infty} \subset \mathcal{S}'(\mathbb{R}^n)$ such that $T_{m} \xrightarrow{\mathcal{S}'(\mathbb{R}^n)} 0$. It means that for every $\phi \in \mathcal{S}(\mathbb{R}^n)$, we have $T_{m}(\phi) \rightarrow 0$ or equivalently, it means
$$
\lim_{m\rightarrow \infty}\|T_{m}\|_{F}=0, \quad \forall m\in\mathbb{N},
$$
for some   $F=\{(\alpha_{i},  \beta_{i})\}\subset \mathbb{N}_0^{2n}$.
Now, we need to show that 
for every $\alpha, \beta \in \mathbb{N}_{0}^n$,
$$
\sup_{x \in \mathbb{R}^n} \left| x^\alpha \partial_x^\beta \operatorname{W}_{s,t}T_{m}(x)\right| \rightarrow 0.
$$
We have
\begin{align}\label{continuoustempered}
	\operatorname{W}_{s,t}T_{m}(x) &= \left(\mathcal{W}(\cdot; s,t)\ast T_{m}\right)(x)=T_{m}\left(\mathcal{W}(x-\cdot;s,t)\right).
\end{align}
In addition, referring to Remark \ref{partialtimesx}, one has
\begin{align*}
	&\partial_x^{\beta} \operatorname{W}_{s,t} T_{m}(x) = \partial_x^{\beta}\left(\mathcal{W}(\cdot; s,t)\ast T_{m}\right)(x)=\left(\partial_x^{\beta}\mathcal{W}(\cdot; s,t)\right)\ast T_{m}(x),\\ &\quad \forall x\in \mathbb{R}^n, ~ \forall (s,t)\in \Delta, ~~\forall \beta \in \mathbb{N}_{0}^n, ~~\forall m\in \mathbb{N}.
\end{align*}
Now, since $T_{m} \in \mathcal{S}'(\mathbb{R}^n)$, it follows from Remark \ref{notation} that there exists a finite set $F=\{(\alpha_i, \beta_{i})\}_{i=1}^d$ for some $d\in\mathbb{N}$ such that
\begin{align*}
	&\sup_{x \in \mathbb{R}^n}\left|x^\alpha\partial_{x}^\beta \operatorname{W}_{s,t}T_{m}(x)\right| = \sup_{x \in \mathbb{R}^n}\left|x^\alpha\left(\partial_x^{\beta}\mathcal{W}(\cdot; s,t)\ast T_{m}\right)(x)\right|
	\\&=\sup_{x \in \mathbb{R}^n}\left|T_{m}(x^\alpha\partial_{x}^\beta \mathcal{W}(x-\cdot;s,t))\right|
	\leq \|T_m\|_{F} \sum_{i=1}^{d}\|(\cdot)^{\alpha}\partial_x^{\beta}\mathcal{W}(\cdot-\colon;s,t)\|_{\alpha_{i},\beta_{i}},\quad\forall  \alpha, \beta\in \mathbb{N}_0^n, ~\forall (s,t)\in\Delta.
\end{align*}
	Therefore, we conclude 
\begin{align*}
	\lim_{m\rightarrow \infty}& \left\| T_{m}((\cdot)^\alpha \partial_\cdot^{\beta} \mathcal{W}(\cdot-\colon;s,t))\right\|_{\infty} 
	\leq \lim_{m\rightarrow \infty} \|T_{m}\|_{F} \sum_{i=1}^d\|(\cdot)^{\alpha}\partial_x^{\beta}\mathcal{W}(\cdot-\colon;s,t)\|_{\alpha_i,\beta_i},\quad \forall \alpha, \beta \in \mathbb{N}_{0}^n,  ~\forall(s,t)\in \Delta.
\end{align*}
Hence, the proof of part $1$ is complete because 
$\lim_{m\rightarrow \infty} \|T_{m}\|_{F}=0$ and $\mathcal{W}(\cdot-\colon;s,t)\in \mathcal{S}(\mathbb{R}^n)$.

The proof of part 2 follows similarly to the reasoning in Remark \ref{partialtimes}.

To prove part 3, since 
$$
\operatorname{W}_{r,s}:\mathcal{S}'(\mathbb{R}^n) \longrightarrow \mathcal{S}(\mathbb{R}^n), \quad \forall (r,s) \in \Delta,
$$
and $$\operatorname{W}_{s,t}v \in \mathcal{S}(\mathbb{R}^n) \subset \mathcal{S}^{'}(\mathbb{R}^n),~~\forall v\in \mathcal{S}^{'}(\mathbb{R}^n),~~\forall (s,t)\in \Delta,$$ then we can calculate $\operatorname{W}_{r,s}(\operatorname{W}_{s,t}v)$. We have
\begin{align*}
	\widehat{\operatorname{W}_{r,s}(\operatorname{W}_{s,t}v)} &= \widehat{\mathcal{W}(\cdot;r,s)\ast \operatorname{W}_{s,t}v} = \widehat{\mathcal{W}(\cdot;r,s)} \widehat{\operatorname{W}_{s,t}v}\\ 
	&=\widehat{\mathcal{W}(\cdot;r,s)}(\widehat{ \mathcal{W}(\cdot;s,t)\ast v}) = \widehat{\mathcal{W}(\cdot;r,s)}\widehat{ \mathcal{W}(\cdot;s,t)}\hat{v},\\
	&\forall (s,t), (r,s)\in \Delta.
\end{align*}
Moreover,
\begin{align*}
	\widehat{\mathcal{W}(\cdot;r,s)}\widehat{ \mathcal{W}(\cdot;s,t)}(\xi) &= \widehat{\mathcal{W}(\xi;r,s)}\widehat{ \mathcal{W}(\xi;s,t)}\\
	&= e^{-\langle [A(s)-A(r)]\xi, \xi \rangle} e^{-\langle [A(t)-A(s)]\xi, \xi \rangle}\\
	&=e^{-\langle [A(t)-A(r)]\xi, \xi \rangle}=	\widehat{\mathcal{W}(\cdot;r,t)},\\
	&\forall (s,t), (r,s)\in \Delta.
\end{align*}
Therefore,
\begin{align*}
	\widehat{\operatorname{W}_{r,s}(\operatorname{W}_{s,t}v)}&= \widehat{\mathcal{W}(\cdot;r,s)}\widehat{ \mathcal{W}(\cdot;s,t)}\hat{v} = \widehat{\mathcal{W}(\cdot;r,t)}\hat{v}\\
	&=\widehat{\operatorname{W}_{r,t}v},\\
	&\forall (s,t), (r,s)\in \Delta.
\end{align*}
Thus,
$$
\operatorname{W}_{r,s}\operatorname{W}_{s,t}=\operatorname{W}_{r,t},~~~\forall (s,t), (r,s)\in \Delta.
$$
In part $4$, we want to prove that for every $v \in \mathcal{S}'(\mathbb{R}^n)$ and for every $(s,t)\in \Delta$ we have 
$$
\partial_{t}\operatorname{W}_{s,t}v-\mathcal{L}_t\operatorname{W}_{s,t}v=0.
$$
Using the definition of the operator $\operatorname{W}_{s,t}$, we have
\begin{equation}\label{n1}
	\partial_{t}\operatorname{W}_{s,t}v-\mathcal{L}_t\operatorname{W}_{s,t}v = \partial_{t}\left(\mathcal{W}(\cdot;s,t)\ast v\right)-\mathcal{L}_t\left(\mathcal{W}(\cdot;s,t)\ast v\right),\quad \forall v \in \mathcal{S}'(\mathbb{R}^n),\quad \forall(s,t)\in \Delta.
\end{equation}
From Remark \ref{partialtimes}, it follows that
\begin{equation}\label{n2}
	\partial_{t}\left(\mathcal{W}(\cdot;s,t)\ast v\right) = \partial_{t}\mathcal{W}(\cdot;s,t)\ast v,\quad \forall v \in \mathcal{S}'(\mathbb{R}^n),\quad \forall(s,t)\in \Delta.
\end{equation}	
In addition, using Remark \ref{partialtimesx}, we have
\begin{align}\label{n3}
	&\mathcal{L}_{t}\left(\mathcal{W}(\cdot; s,t)\ast v\right)= \sum_{i,j=1}^n a_{i,j}(t) \frac{\partial^2}{\partial_{x_i}\partial_{x_j}}\left(\mathcal{W}(\cdot; s,t)\ast v \right)= \left(\sum_{i,j=1}^n a_{i,j}(t)\frac{\partial^2}{\partial_{x_i}\partial_{x_j}}\mathcal{W}(\cdot; s,t)\right)\ast v,\\
	&\quad  \forall v \in \mathcal{S}'(\mathbb{R}^n),~~ \forall(s,t)\in \Delta.\notag 
\end{align}
Plugging \eqref{n2} and \eqref{n3} in \eqref{n1} we obtain the following:
\begin{align}
	\partial_{t}\operatorname{W}_{s,t}v-\mathcal{L}_t\operatorname{W}_{s,t}v&= \partial_{t}\mathcal{W}(\cdot;s,t)\ast v-\sum_{i,j=1}^n a_{i,j}(t)\frac{\partial^2}{\partial_{x_i}\partial_{x_j}}\mathcal{W}(\cdot; s,t)\ast v \notag\\
	&= \left(\partial_{t}\mathcal{W}(\cdot;s,t)-\sum_{i,j=1}^n a_{i,j}(t)\frac{\partial^2}{\partial_{x_i}\partial_{x_j}}\mathcal{W}(\cdot; s,t)\right)\ast v\notag\\
	&=\left(\partial_{t}\mathcal{W}(\cdot;s,t)-\mathcal{L}_{t}\mathcal{W}(\cdot; s,t)\right)\ast v
	=0, \quad  \forall v \in \mathcal{S}'(\mathbb{R}^n),~~ (s,t)\in \Delta.
\end{align}
The justification for the final step in the above computation follows directly from part 2 of Proposition \ref{pr1}.

To prove part $5$, it suffices to show that for every $\hat{f}\in \mathcal{S}(\mathbb{R}^n)$, we have
\begin{equation}\label{Ws}
	\widehat{\operatorname{W}_{s-\delta,s+\epsilon}f}\xrightarrow{\mathcal{S}(\mathbb{R}^n)}\hat{f},\quad\text{when}\quad\epsilon,\delta\rightarrow 0,\quad\forall s\in[0,\infty).
\end{equation}
Before proceeding with the proof, let us note some key points that will be used throught the proof.
Firstly, for any $|\alpha|-$times differentiable complex-valued functions $f$ and $g$,
\begin{equation}\label{leibnitz}
	D^{\alpha}(fg)=\sum_{k\leq\alpha}\binom{\alpha}{k}
	D^kfD^{\alpha-k}g,~~\forall f,g\in C^{|\alpha|}(\mathbb{R}^n),~\forall \alpha\in\mathbb{N}_{0}^n.
\end{equation}
Secondly, in the following calculation, we denote the matrix norm $\left\| a(\tau) \right\|_{n \times n}$ as
\[
\left\| a(\tau) \right\|_{n \times n} \doteq \max_{1 \leq j \leq n} \sum_{i=1}^{n} |a_{i,j}(\tau)|, \quad \forall \tau \in [0,\infty),
\]
where $a\in C^\infty(\mathbb{R}_+,\mathrm{GL}(n))\cap C([0,\infty), \mathrm{GL}(n))$.

Now, applying \eqref{leibnitz}, we obtain the following:
\begin{align}\label{alpgabtaderivative}
	&\sup_{\xi\in\mathbb{R}^{n}}	\left|\xi^{\alpha}\partial_{\xi}^{\beta}\left(   \widehat{\operatorname{W}_{s-\delta,s+\epsilon}f(\xi)}-\hat{f(\xi)}\right) \right| =\sup_{\xi\in\mathbb{R}^{n}}\left| \xi^{\alpha} \partial_{\xi}^{\beta}\left(\widehat{\mathcal{W}(\xi;s-\delta,s+\epsilon)}\hat{ f}-\hat{f}\right)\right|  \notag\\
	&=\sup_{\xi\in\mathbb{R}^{n}}\left| \xi^{\alpha} \partial_{\xi}^{\beta}\left(\widehat{\left( \mathcal{W}(\xi;s-\delta,s+\epsilon) -1\right) }\hat{f(\xi)}\right) \right|
	\leq\sup_{\xi\in\mathbb{R}^{n}}\sum_{\gamma\leq \beta}\binom{\beta}{\gamma}\left|\partial_{\xi}^{\gamma}\widehat{\left( \mathcal{W}(\xi;s-\delta,s+\epsilon) -1\right) }\left| \right| \xi^{\alpha}\partial_{\xi}^{\beta-\gamma}\hat{f(\xi)}\right|,\notag\\
	&\forall\hat{f}\in\mathcal{S}(\mathbb{R}^n),\quad \forall\alpha,\beta\in\mathbb{N}_{0}^n.
\end{align}
In view of \eqref{alpgabtaderivative}, we distinguish between two cases, $\gamma \neq 0$ and $\gamma = 0$. When $\gamma \neq 0$, it follows from Lemma \ref{upperboundderivative} that
\begin{align}\label{gamma}
	&	\left|\partial_{\xi}^{\gamma}\widehat{\left( \mathcal{W}(\xi;s-\delta,s+\epsilon) -1\right) }\right| =	\left|\partial_{\xi}^{\gamma}\widehat{ \mathcal{W}(\xi;s-\delta,s+\epsilon)}\right|\notag\\
	&\lesssim_{\gamma} \sum_{\substack{\alpha' \in \mathbb{N}_0^{n \times n} \\ \sum_{j=1}^n \alpha'_{ij} \leq\gamma_i}} 
	\prod_{i,j=1}^n\left| A_{i,j}(s+\epsilon)-A_{i,j}(s-\delta)\right|  ^{\alpha'_{i,j}}\prod^{n}_{k=1}|\xi_{k}|^{q_{k}}e^{-\langle \xi, [A(s+\epsilon)-A(s-\delta)]\xi\rangle}\notag\\
	&\leq \sum_{\substack{\alpha' \in \mathbb{N}_0^{n \times n} \\ \sum_{j=1}^n \alpha'_{ij} \leq\gamma_i}}\prod^{n}_{i,j=1}\left(\max_{\tau\in [0,s+1]}\left| a_{i,j}(\tau)\right| \right)^{\alpha'_{i,j}} (\epsilon+\delta)^{\alpha'_{i,j}}\prod^{n}_{k=1}(1+|\xi|)^{q_{k}}\notag\\
	&\leq \sum_{\substack{\alpha' \in \mathbb{N}_0^{n \times n} \\ \sum_{j=1}^n \alpha'_{ij} \leq\gamma_i}}\prod^{n}_{i,j=1}\left(\max_{\tau\in [0,s+1]}\left\|  a(\tau)\right\|_{n\times n} \right)^{\alpha'_{i,j}} (\epsilon+\delta)^{\alpha'_{i,j}}(1+|\xi|)^{|q|}\notag\\
	&\lesssim_{\gamma} \sum_{\substack{\alpha' \in \mathbb{N}_0^{n \times n} \\ \sum_{j=1}^n \alpha'_{ij} \leq\gamma_i}}\left(1+\max_{\tau\in [0,s+1]}\left\|  a(\tau)\right\|_{n\times n} \right)^{n^{2}|\gamma|} (\epsilon+\delta)^{\alpha'_{i,j}}(1+|\xi|)^{|\gamma|}\notag\\
	&=\left(1+\max_{\tau\in [0,s+1]}\left\|  a(\tau)\right\|_{n\times n} \right)^{n^{2}|\gamma|}(1+|\xi|)^{|\gamma|}\sum_{\substack{\alpha' \in \mathbb{N}_0^{n \times n} \\ \sum_{j=1}^n \alpha'_{ij} \leq\gamma_i}}(\epsilon+\delta)^{\alpha'_{i,j}},\quad\forall \xi\in\mathbb{R}^{n},~\forall s\in[0,\infty),~\forall \gamma\in\mathbb{N}^{n}_{0}.
\end{align}
Thus,
\begin{align}\label{epsilondelta1}
	&\sup_{\xi\in\mathbb{R}^{n}}	\left|\xi^{\alpha}\partial_{\xi}^{\beta}\left(   \widehat{\operatorname{W}_{s-\delta,s+\epsilon}f(\xi)}-\hat{f(\xi)}\right) \right| \leq \sup_{\xi\in\mathbb{R}^{n}}\sum_{\substack{\gamma\leq \beta\\\gamma\neq 0}}\binom{\beta}{\gamma}\left|\partial_{\xi}^{\gamma}\widehat{\left( \mathcal{W}(\xi;s-\delta,s+\epsilon) -1\right) }\left| \right| \xi^{\alpha}\partial_{\xi}^{\beta-\gamma}\hat{f(\xi)}\right|\notag\\
	&\lesssim_{\gamma}\left(1+\max_{\tau\in [0,s+1]}\left\|  a(\tau)\right\|_{n\times n} \right)^{n^{2}|\gamma|}\sup_{\xi\in\mathbb{R}^n}\left((1+|\xi|)^{|\gamma|}\left| \xi^{\alpha}\partial_{\xi}^{\beta-\gamma}\hat{f(\xi)}\right|\right)\sum_{\substack{\alpha' \in \mathbb{N}_0^{n \times n} \\ \sum_{j=1}^n \alpha'_{ij} \leq\gamma_i}}(\epsilon+\delta)^{\alpha'_{i,j}}\notag\\
	&\lesssim_{\gamma, s, n, \alpha, \beta}\sum_{\substack{\alpha' \in \mathbb{N}_0^{n \times n} \\ \sum_{j=1}^n \alpha'_{ij} \leq\gamma_i}}(\epsilon+\delta)^{\alpha'_{i,j}}.
\end{align}
We now turn to the case $\gamma=0$ as follows:
\begin{align}\label{eta0}
	&\sup_{\xi\in\mathbb{R}^{n}} 
	\left| \widehat{ \mathcal{W}(\xi; s-\delta, s+\epsilon) - 1 } \right| 
	\left| \xi^{\alpha} \partial_{\xi}^{\beta} \widehat{f}(\xi) \right| \notag\\
	&= \sup_{\xi\in\mathbb{R}^{n}} 
	\left| e^{-\langle \xi, [A(s+\epsilon) - A(s-\delta)] \xi \rangle} - 1 \right| 
	\left| \xi^{\alpha} \partial_{\xi}^{\beta} \widehat{f}(\xi) \right| \notag \\
	&= \sup_{\xi\in\mathbb{R}^{n}} 
	\left| \langle \xi, [A(s+\epsilon) - A(s-\delta)] \xi \rangle \right| 
	\cdot \frac{ \left| e^{-\langle \xi, [A(s+\epsilon) - A(s-\delta)] \xi \rangle} - 1 \right| }
	{ \left| \langle \xi, [A(s+\epsilon) - A(s-\delta)] \xi \rangle \right| }
	\left| \xi^{\alpha} \partial_{\xi}^{\beta} \widehat{f}(\xi) \right| \notag \\
	&\leq \left\| 
	\frac{ \left| e^{-\langle \cdot, [A(s+\epsilon) - A(s-\delta)] \cdot \rangle} - 1 \right| }
	{ \left| \langle \cdot, [A(s+\epsilon) - A(s-\delta)] \cdot \rangle \right| }
	\right\|_{\infty} \cdot 
	\sup_{\xi\in\mathbb{R}^{n}} |\xi|^2 \left\| A(s+\epsilon) - A(s-\delta) \right\|_{n\times n}
	\left| \xi^{\alpha} \partial_{\xi}^{\beta} \widehat{f}(\xi) \right| \notag \\
	&\leq \left\| A(s+\epsilon) - A(s-\delta) \right\|_{n\times n} 
 \sup_{\xi\in\mathbb{R}^{n}} |\xi|^{|\alpha| + 2} 
	\left| \partial_{\xi}^{\beta} \widehat{f}(\xi) \right| \notag \\
	&\leq (\epsilon + \delta) \max_{\tau \in [0, s+1]} \| a(\tau) \|_{n\times n} 
 \sup_{\xi\in\mathbb{R}^{n}} |\xi|^{|\alpha| + 2} 
	\left| \partial_{\xi}^{\beta} \widehat{f}(\xi) \right| \notag \\
	&\lesssim_{s,\alpha,\beta} (\epsilon + \delta), \quad \forall \epsilon, \delta \in(0,1). 
\end{align}
Note that based on the following points, the above calculations are valid:
\begin{equation}
	\langle \xi,[A(s+\epsilon)-A(s-\delta)]\xi\rangle>0,\quad \forall \xi\neq 0, ~\forall s\in[0,\infty),~~\forall\epsilon,\delta\in (0,1),
\end{equation}
and
$$
\left\|  \dfrac{1-e^{-\langle\cdot,[A(s+\epsilon)-A(s-\delta)]\cdot\rangle}}{\langle\cdot,[A(s+\epsilon)-A(s-\delta)]\cdot\rangle}\right\|_{\infty}=1,\quad \forall s\in [0,\infty),~\forall \epsilon, \delta \in (0,1),
$$
and
$$
\left| \langle \xi,[A(s+\epsilon)-A(s-\delta)]\xi\rangle\right| \leq |\xi|^{2} \left\| A(s+\epsilon)-A(s-\delta)\right\| _{n\times n},\forall s\in [0,\infty),~\forall\xi \in\mathbb{R}^{n},~\forall \epsilon, \delta \in (0,1),
$$
and
$$
\left\| A(s+\epsilon)-A(s-\delta)\right\| _{n\times n}\leq \int\limits^{s+\epsilon}_{s-\delta}\left\| a(\tau)\right\| _{n\times n}d\tau\leq (\epsilon+\delta)\left( \max_{\tau \in[0,s+1]}\left\| a(\tau)\right\| _{n\times n}\right) <\infty,$$
$$~~~~ \forall s\in [0,\infty),~~\forall\epsilon, \delta \in (0,1),
$$
as well as
$$
\sup_{\xi\in\mathbb{R}^{n}}\left|\xi\right|^{|\alpha|+2}\left| \partial^{\beta}\widehat{f}(\xi)\right|<\infty,\quad\forall \xi\in\mathbb{R}^{n},\quad\forall \alpha,\beta\in \mathbb{N}_{0}^{n},
$$  
given that $\widehat{f}\in \mathcal{S}(\mathbb{R}^n)$.
By substituting \eqref{epsilondelta1} and \eqref{eta0} into \eqref{alpgabtaderivative}, it follows that
\begin{align}\label{epsilon}
	&\sup_{\xi\in\mathbb{R}^{n}}\sum_{\gamma\le\beta}\binom{\beta}{\gamma}\left|\partial^{\gamma}\widehat{\left( \mathcal{W}(\xi;s-\delta,s+\epsilon) -1\right) }\left| \right| \xi^{\alpha}\partial^{\beta-\gamma}\widehat{f}(\xi)\right|\notag\\
	&\lesssim_{\gamma, s, n, \alpha, \beta}\sum_{\substack{\alpha' \in \mathbb{N}_0^{n \times n} \\ \sum_{j=1}^n \alpha'_{ij} \leq\gamma_i}}(\epsilon+\delta)^{\alpha'_{i,j}}+(\epsilon+\delta),
\end{align}
Approaching $\delta$ and $\epsilon$ to zero in \eqref{epsilon}, we obtain the goal. 

Finally, to prove part 6, let $v\in\mathcal{S}'(\mathbb{R}^n)$. Then for every $\phi \in \mathcal{S}(\mathbb{R}^n)$, using the previous part along with parts  3 and 4 of Proposition \ref{pr1}, we have
\begin{align}
	\lim_{(\delta,\epsilon)\to(0+,0+)}\operatorname{W}_{s-\delta,s+\epsilon}v(\phi)&=\lim_{(\delta,\epsilon)\to(0+,0+)}\mathcal{W}(\cdot;s-\delta,s+\epsilon)\ast v(\phi)\notag\\
	&=\lim_{(\delta,\epsilon)\to(0+,0+)}v\left(\mathcal{W}(\cdot;s-\delta,s+\epsilon)\ast \phi\right)\notag\\
	&=v\left(\lim_{(\delta,\epsilon)\to(0+,0+)}\mathcal{W}(\cdot;s-\delta,s+\epsilon)\ast \phi\right)=v(\phi).
\end{align}
Thus, the proof of part 6 and consequently, Proposition \ref{pr2} is complete.
\end{proof}

The operator family \(\{\operatorname{W}_{s,t}\}_{(s,t) \in \Delta}\) allows us to solve the Cauchy problem for the equation (\ref{heatEq}) in the classical sense in terms of the Duhamel's principle. Prior to addressing the solution, it is necessary to discuss some remarks.

\begin{remark}\label{Leibniz}
Assume that $f\in C^\infty(\mathbb{R}_{+})\otimes\mathcal{S}(\mathbb{R}^n)$.
Hence,
\begin{align}
	\partial_{t}^\gamma\left(\int_{0}^{t}\operatorname{W}_{s,t}f(s,x)ds\right)&=\gamma\partial_{t}^{\gamma-1}W_{t,t}f(t,x)+\int_{0}^{t}\partial_{t}^{\gamma}\left(\operatorname{W}_{s,t}f(s,x)\right)ds\notag\\
	&=\gamma\partial_{t}^{\gamma-1}f(t,x)+\int_{0}^{t}\partial_{t}^{\gamma}\left(\operatorname{W}_{s,t}f(s,x)\right)ds,
	\quad\forall x\in \mathbb{R}^n,~~\forall t\in \mathbb{R}_{+}, ~~\forall \gamma\in \mathbb{N}.
\end{align}
\end{remark}

\begin{remark}\label{changeorderforx}
Assume that $f\in  C^\infty(\mathbb{R}_+)\otimes \mathcal{S}(\mathbb{R}^n)\cap C([0,\infty),\mathcal{S}(\mathbb{R}^{n}))$. We claim that 
\begin{align}\label{change4}
	&\partial^{\beta}_{x}\left( \int\limits_{0}^{t}f(s)\ast\partial_{t}^\gamma\mathcal{W}({\cdot;s,t}) (x)ds\right) =\int\limits_{0}^{t}f(s)\ast\left( \partial^{\beta}_{x}\partial_{t}^\gamma\mathcal{W}({\cdot;s,t}) \right) (x)ds,\\
	& \forall x\in \mathbb{R}^n,~~\forall t\in \mathbb{R}_{+},~~\forall \beta\in \mathbb{N}^n_{0},~~ \forall \gamma\in \mathbb{N}_{0}\notag.
\end{align}
To show \eqref{change4}, referring to Theorem 2.27 in \cite{folland}, we need to show the following:
\begin{align}\label{change5}
	&\int\limits_0^t\left|f(s)\ast\partial_{t}^\gamma\mathcal{W}({\cdot;s,t})(x)\right|ds<\infty,
	\quad\forall x\in \mathbb{R}^n,~~\forall t\in \mathbb{R}_{+},~~ \forall \gamma\in \mathbb{N}_{0}.
\end{align}
and the mapping
\begin{equation}\label{change6}
	x\mapsto \left(f(s)\ast\partial_{t}^\gamma\mathcal{W}({\cdot;s,t})\right)(x), ~~\forall x\in \mathbb{R}^n,~~\forall (s,t)\in \Delta,~~ \forall \gamma\in \mathbb{N}_{0},  
\end{equation} 
is $|\beta|-$times differentiable for $\forall \beta\in \mathbb{N}^n_{0}$, as well as, 
\begin{equation}\label{change7}
	\int\limits_0^t\left|f(s)\ast\partial_{x}^\beta\partial_{t}^\gamma\mathcal{W}({\cdot;s,t})(x)\right|ds<\infty,~~ \forall x\in \mathbb{R}^n,~~\forall t\in \mathbb{R}_{+},~~\forall \beta\in \mathbb{N}^n_{0},~~ \forall \gamma\in \mathbb{N}_{0}.
\end{equation}
To prove \eqref{change5},  since $f\in C^\infty(\mathbb{R}_{+})\otimes\mathcal{S}(\mathbb{R}^n)$ and  $\mathcal{W}\in\mathcal{S}(\mathbb{R}^n)\otimes C^\infty(\Delta)$, for $\forall x\in \mathbb{R}^n,$ for $\forall t\in \mathbb{R}_{+}$, and for $\forall \gamma\in \mathbb{N}_{0}$, we have
\begin{align*}
	\int\limits_0^t|f(s)\ast\partial_{t}^\gamma\mathcal{W}({\cdot;s,t})(x)|ds&\leq 	\int\limits_0^t\|f(s)\ast\partial_{t}^\gamma\mathcal{W}({\cdot;s,t})\|_{\infty}ds\\
	&\leq \int\limits_0^t\|f(s)\|_{\infty}\|\partial^{\gamma}_{t}\mathcal{W}({\cdot;s,t}))\|_{1}ds\\
	&\leq \max_{s\in[0,t]}\|f(s)\|_{\infty}\int\limits_0^t \|\partial^{\gamma}_{t}\mathcal{W}({\cdot;s,t})\|_{1}ds<\infty.
\end{align*}
	
To prove \eqref{change6}, observe that $f\in  C^\infty(\mathbb{R}_{+})\otimes\mathcal{S}(\mathbb{R}^n)$  and  $\mathcal{W}\in\mathcal{S}(\mathbb{R}^n)\otimes C^\infty(\Delta)$, which implies that
$$
\partial^{\beta}_{x}\left(f(s)\ast\partial_{t}^\gamma\mathcal{W}({\cdot;s,t})\right)=f(s)\ast\partial^{\beta}_{x}\partial_{t}^\gamma\mathcal{W}({\cdot;s,t})~~,\forall x\in \mathbb{R}^n,~~\forall (s,t)\in \Delta,~~\forall \beta\in \mathbb{N}^n_{0},~~ \forall \gamma\in \mathbb{N}_{0}.
$$
	
To prove \eqref{change7}, we have 
\begin{align*}
	\int\limits_0^t\left|f(s)\ast\partial_{x}^\beta\partial_{t}^\gamma\mathcal{W}({\cdot;s,t})(x)\right|ds&\leq	\int\limits_0^t\left\|f(s)\ast\partial_{x}^\beta\partial_{t}^\gamma\mathcal{W}({\cdot;s,t})\right\|_{\infty}ds\\
	&\leq \int\limits_0^t\|f(s)\|_{\infty}\left\|\partial^{\beta}_{x}\partial_{t}^{\gamma}\mathcal{W}({\cdot;s,t})\right\|_{1}ds\\
	&\leq \max_{s\in[0,t]}\|f(s)\|_{\infty}\int\limits_0^t\left\|\partial^{\beta}_{x}\partial^{\gamma}_{t}\mathcal{W}({\cdot;s,t})\right\|_{1} ds<\infty,\\
	& \forall x\in \mathbb{R}^n,~~\forall t\in \mathbb{R}_{+},~~\forall \beta\in \mathbb{N}^n_{0},~~ \forall \gamma\in \mathbb{N}_{0}.
\end{align*}
\end{remark}

Having established the necessary framework, we are now ready to proceed with the key theorem.

\begin{proposition}\label{pr3}
Let $u_0\in \mathcal{S}'(\mathbb{R}^n)$ and $f\in  C^\infty(\mathbb{R}_+)\otimes \mathcal{S}(\mathbb{R}^n)\cap C([0,\infty), \mathcal{S}(\mathbb{R}^n))$. Then the function $u:\mathbb{R}_+\times\mathbb{R}^n\to\mathbb{C}$ defined by
\begin{equation}\label{solution}
	u(t,x)\doteq\operatorname{W}_{0,t}u_0(x)+\int\limits_0^t\operatorname{W}_{s,t}f(s,x)ds,\quad \forall x\in\mathbb{R}^n,~~ t\in\mathbb{R}_+,
\end{equation}
satisfies
\begin{equation}\label{tensor}
	u\in C^\infty(\mathbb{R}_+)\otimes\mathcal{S}(\mathbb{R}^n)\cap C([0,\infty), \mathcal{S}'(\mathbb{R}^{n})),
\end{equation}
and the Cauchy problem
\begin{equation}\label{cauchy}
\begin{cases}
	\partial_t u(t,x)-\mathcal{L}_{t}u(t,x)=f(t,x),\quad \forall x\in\mathbb{R}^n,~~\forall t\in\mathbb{R}_+,\\
	\lim\limits_{t\to0+}u(t,\cdot)=u_0.
\end{cases}
\end{equation}
Moreover, if $u_0\ge0$ and $f\ge0$ then $u\ge0$.
\end{proposition}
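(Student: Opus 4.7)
The plan is to verify the four assertions—the regularity (\ref{tensor}), the differential equation, the initial trace, and the positivity—in turn, using Propositions \ref{pr1} and \ref{pr2} together with Remarks \ref{Leibniz} and \ref{changeorderforx} to justify every manipulation under the integral sign. I would split $u$ into the homogeneous part $u_h(t,x)\doteq \operatorname{W}_{0,t}u_0(x)$ and the Duhamel part $u_d(t,x)\doteq \int_0^t \operatorname{W}_{s,t}f(s,x)\,ds$, and handle them separately.

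For the regularity, Parts 1 and 2 of Proposition \ref{pr2} immediately give $u_h\in C^\infty(\mathbb{R}_+,\mathcal{S}(\mathbb{R}^n))$, and Part 6 (with $s=0$, $\delta=0$) extends it continuously at $t=0$ with $\lim_{t\to 0+}u_h(t,\cdot)=u_0$ in $\mathcal{S}'(\mathbb{R}^n)$. For $u_d$, Remark \ref{changeorderforx} permits differentiation of arbitrary order in $x$ under the integral, and Remark \ref{Leibniz}, applied iteratively, expresses $\partial_t^\gamma u_d$ as the sum of a boundary contribution and $\int_0^t\partial_t^\gamma\operatorname{W}_{s,t}f(s,\cdot)\,ds$. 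Since the mapping $(s,t)\mapsto \operatorname{W}_{s,t}f(s,\cdot)$ lives in $C^\infty(\Delta,\mathcal{S}(\mathbb{R}^n))$ and its Schwartz seminorms are locally bounded in $(s,t)$, all these integrals converge in $\mathcal{S}(\mathbb{R}^n)$ and produce Schwartz functions in $x$, giving $u_d\in C^\infty(\mathbb{R}_+)\otimes \mathcal{S}(\mathbb{R}^n)$.

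For the PDE itself, Part 4 of Proposition \ref{pr2} shows $(\partial_t-\mathcal{L}_t)u_h=0$. For $u_d$, Remark \ref{Leibniz} with $\gamma=1$ yields
\[
\partial_t u_d(t,x)=f(t,x)+\int_0^t\partial_t \operatorname{W}_{s,t}f(s,x)\,ds,
\]
where the boundary term $\operatorname{W}_{t,t}f(t,\cdot)=f(t,\cdot)$ is understood via Part 5 of Proposition \ref{pr2}. Remark \ref{changeorderforx} commutes $\mathcal{L}_t$ with the integral, so
\[
(\partial_t-\mathcal{L}_t)u_d(t,x)=f(t,x)+\int_0^t(\partial_t-\mathcal{L}_t)\operatorname{W}_{s,t}f(s,x)\,ds=f(t,x),
\]
again by Part 4 of Proposition \ref{pr2}. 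For the initial trace, in addition to $u_h(t,\cdot)\to u_0$ in $\mathcal{S}'$ as $t\to 0^+$, one shows $u_d(t,\cdot)\to 0$ in $\mathcal{S}'(\mathbb{R}^n)$: since every Schwartz seminorm of $\operatorname{W}_{s,t}f(s,\cdot)$ is bounded uniformly in $s\in[0,t]$ for $t$ near $0$ (by the same Schwartz-continuity estimates used in Part 1 of Proposition \ref{pr2}), each seminorm of $u_d(t,\cdot)$ is $O(t)$ as $t\to 0^+$.

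Positivity is immediate from the Gaussian structure: $\mathcal{W}(\cdot;s,t)$ is a strictly positive Schwartz function, so if $u_0\ge 0$ and $f(s,\cdot)\ge 0$ for all $s$, then $\operatorname{W}_{0,t}u_0(x)=u_0(\mathcal{W}(x-\cdot;0,t))\ge 0$ and likewise $\operatorname{W}_{s,t}f(s,x)\ge 0$, and the $s$-integral preserves this sign. The main obstacle in this proof is the careful justification of Remark \ref{Leibniz} at the diagonal $s=t$—namely the identification of $\operatorname{W}_{t,t}f(t,\cdot)$ with $f(t,\cdot)$ via the limit from Part 5, and the verification that successive $t$-differentiations still leave Schwartz integrands whose seminorms are locally integrable in $s$; this is precisely what Remarks \ref{Leibniz} and \ref{changeorderforx} were set up to deliver.
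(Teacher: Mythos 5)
Your proposal is correct and follows essentially the same route as the paper: the same split into the homogeneous part $\operatorname{W}_{0,t}u_0$ and the Duhamel term, regularity via the smoothness of $(s,t)\mapsto\operatorname{W}_{s,t}v$ in $\mathcal{S}(\mathbb{R}^n)$ together with Remarks \ref{Leibniz} and \ref{changeorderforx} for differentiation under the integral, the PDE from the kernel identity $(\partial_t-\mathcal{L}_t)\mathcal{W}=0$ (your appeal to part 4 of Proposition \ref{pr2} is just the operator-level form of the paper's use of part 2 of Proposition \ref{pr1}), the initial trace from parts 5–6 of Proposition \ref{pr2} plus the vanishing of the $O(t)$ integral term, and positivity from the strict positivity of the Gaussian kernel. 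No substantive gap.
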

\begin{proof} 
To show \eqref{tensor}, it suffices to prove the following:
\begin{align}\label{seminorm1}
	&\operatorname{W}_{0,\cdot}u_{0} \in C^{\infty}(\mathbb{R}_{+}\times \mathbb{R}^n),
\end{align}
and
\begin{align}\label{seminorm2}
	&\sup_{x\in\mathbb{R}^n}\sup_{t\in K\subset \mathbb{R}_{+}}\left|x^{\alpha}\partial^{\beta}_{x}\partial^{\gamma}_{t}\operatorname{W}_{0,t}u_{0}(x)\right|<\infty,\quad\forall \alpha,\beta\in \mathbb{N}^{n}_{0},~~\forall \gamma\in \mathbb{N}_{0}, ~~\forall~\text{compact}~K\subset \mathbb{R}_{+},
\end{align}
as well as 
\begin{align}\label{seminorm3}
	&\int\limits_0^\cdot\operatorname{W}_{s,\cdot}f(s,:)ds\in C^{\infty}(\mathbb{R}_{+}\times \mathbb{R}^n),
\end{align}
and
\begin{align}\label{seminorm4}
	\sup_{x\in\mathbb{R}^n}&\sup_{t\in K\subset \mathbb{R}_{+}}\left|x^{\alpha}\partial^{\beta}_{x}\partial^{\gamma}_{t}\int\limits_0^t\operatorname{W}_{s,t}f(s,x)ds\right|<\infty,\quad\forall \alpha,\beta\in \mathbb{N}^{n}_{0},~~\forall \gamma\in \mathbb{N}_{0},~~\forall~ \text{compact}~K\subset \mathbb{R}_{+}.
\end{align}
To show \eqref{seminorm1}, one can see that 
\begin{align*}
	\partial^{\beta}_{x}\operatorname{W}_{0,t}u_{0}(x)&=\partial^{\beta}_{x}\left((\mathcal{W}(\cdot; 0,t)\ast u_{0})(x)\right)=\partial^{\beta}_{x}(u_{0}(\mathcal{W}(x-\cdot;0,t))\\&=u_{0}(\partial^{\beta}_{x}\mathcal{W}(x-\cdot;0,t))=(u_{0}\ast\partial^{\beta}_{x}\mathcal{W}(\cdot;0,t))(x),\\
	&\quad \forall x\in \mathbb{R}^n,~ \forall t\in \mathbb{R}_{+}, ~\forall \beta \in \mathbb{N}^{n}_{0}.
\end{align*}
Therefore, 
$\operatorname{W}_{0,t}u_{0}\in C^{\infty}(\mathbb{R}^{n}),~~\forall t\in\mathbb{R}_{+}.$

Moreover, we have
\begin{align*}
	\partial^{\gamma}_{t}\operatorname{W}_{0,t}u_{0}(x)&=\partial^{\gamma}_{t}\left((\mathcal{W}(\cdot; 0,t)\ast u_{0})(x)\right)=\partial^{\gamma}_{t}(u_{0}(\mathcal{W}(x-\cdot;0,t))\\
	&=u_{0}(\partial^{\gamma}_{t}\mathcal{W}(x-\cdot;0,t))=(u_{0}\ast\partial^{\gamma}_{t}\mathcal{W}(\cdot;0,t))(x),\\
	&\forall x\in \mathbb{R}^n,~ \forall t\in \mathbb{R}_{+},~~\forall \gamma\in\mathbb{N}_{0}. 
\end{align*}
Thus, we have 
$\operatorname{W}_{0,\cdot}u_{0}(x)\in C^{\infty}(\mathbb{R}_{+}),~~\forall x\in\mathbb{R}^{n}.$

To prove \eqref{seminorm2}, observe that since \(\mathcal{W} \in \mathcal{S}(\mathbb{R}^n) \otimes C^\infty(\Delta)\), the mapping \(t \mapsto \mathcal{W}(x - \cdot; 0, t) \in \mathcal{S}(\mathbb{R}^n)\) belongs to \(C^\infty(\mathbb{R}_+, \mathcal{S}(\mathbb{R}^n))\). Therefore, for every compact $K\subset \mathbb{R}_+$, one has
\begin{align*}
	&\sup_{x\in\mathbb{R}^n}\sup_{t\in K\subset \mathbb{R}_{+}}\left|x^{\alpha}\partial^{\beta}_{x}\partial^{\gamma}_{t}\operatorname{W}_{0,t}u_{0}(x)\right|\\&=\sup_{x\in\mathbb{R}^n}\sup_{t\in K\subset \mathbb{R}_{+}}\left|x^{\alpha}\partial^{\beta}_{x}\partial^{\gamma}_{t}((\mathcal{W}(\cdot;0,t)\ast u_{0})(x))\right|\\
	&=\sup_{x\in\mathbb{R}^n}\sup_{t\in K\subset \mathbb{R}_{+}}\left|x^{\alpha}\partial^{\beta}_{x}\partial^{\gamma}_{t}(u_{0}(\mathcal{W}(x-\cdot;0,t)))\right|\notag\\
	&=\sup_{x\in\mathbb{R}^n}\sup_{t\in K\subset \mathbb{R}_{+}}\left|x^{\alpha}\partial^{\beta}_{x}(u_{0}(\partial^{\gamma}_{t}\mathcal{W}(x-\cdot;0,t)))\right|.
\end{align*}
Moreover, the fact that $\mathcal{W}\in\mathcal{S}(\mathbb{R}^n)\otimes C^\infty(\Delta)$ implies that the mapping $x\mapsto \partial^{\gamma}_{t}\mathcal{W}(x-\cdot,0,t)\in \mathcal{S}(\mathbb{R}^n)$ belongs to $C^{\infty}(\mathbb{R}^{n},\mathcal{S}(\mathbb{R}^{n}))$. Now, by continuing the above computation, we get
\begin{align*}
	\sup_{x\in\mathbb{R}^n}\sup_{t\in K\subset \mathbb{R}_{+}}\left|x^{\alpha}\partial^{\beta}_{x}\partial^{\gamma}_{t}\operatorname{W}_{0,t}u_{0}(x)\right|&=\sup_{x\in\mathbb{R}^n}\sup_{t\in K\subset \mathbb{R}_{+}}\left|x^{\alpha}\partial^{\beta}_{x}(u_{0}(\partial^{\gamma}_{t}\mathcal{W}(x-\cdot;0,t)))\right|\\
	&=\sup_{x\in\mathbb{R}^n}\sup_{t\in K\subset \mathbb{R}_{+}}\left|u_{0}(x^{\alpha}\partial^{\beta}_{x}\partial^{\gamma}_{t}\mathcal{W}(x-\cdot;0,t))\right|.
\end{align*}
Since $u_0\in\mathcal{S}'(\mathbb{R}^n)$, using Remark \ref{notation}, there exits a finite set $F=\{(\alpha_{i}, \beta_{i})\}_{i=1}^d \subset \mathbb{N}_0^{2n}$ for some $d\in \mathbb{N}$ such that
\begin{align}\label{seminormu0}
	|u_{0}(x^{\alpha}\partial^{\beta}_{x}\partial^{\gamma}_{t}\mathcal{W}&(x-\colon;0,t))|\leq \|u_{0}\|_{F} \sum_{i=1}^d\left\|(\cdot)^{\alpha}\partial_{x}^{\beta}\partial_{t}^{\gamma}\mathcal{W}(\cdot-\colon;0,t)\right\|_{\alpha_i,\beta_i},\notag\\
	&\quad \forall x\in\mathbb{R}^n, ~ \forall t\in\mathbb{R}_+,~ \forall \alpha, \beta \in \mathbb{N}_0^n,~ \forall\gamma\in\mathbb{N}_0.
\end{align}
From \eqref{seminormu0}, it follows that 
\begin{align}\label{supsup}		\sup_{x\in\mathbb{R}^n} \sup_{t\in K\subset \mathbb{R}_{+}}	|u_{0}(x^{\alpha}\partial^{\beta}_{x}\partial^{\gamma}_{t}\mathcal{W}&(x-\cdot;0,t))| \leq  \|u_{0}\|_{F} \left(\sup_{t\in K\subset \mathbb{R}_{+}} \sum_{i=1}^d\left\|(\cdot)^{\alpha} \partial_{x}^{\beta}\partial_{t}^{\gamma}\mathcal{W}(\cdot-\colon;0,t)\right\|_{\alpha_i,\beta_i}\right) <\infty,\\
&\forall \alpha, \beta \in \mathbb{N}_{0}^n, ~\forall \gamma\in \mathbb{N}_0, ~\forall K\subset \mathbb{R}_+ ~\text{compact}\notag.
\end{align}
This completes the proof of \eqref{seminorm2}.
To show \eqref{seminorm3}, referring to \ref{Leibniz}, one has 
\begin{align*}
	\partial^{\gamma}_{t}\int\limits^{t}_{0}\operatorname{W}_{s,t}f(s,x)ds&=\partial^{\gamma}_{t}\int\limits^{t}_{0}(\mathcal{W}(\cdot; s,t)\ast f(s))(x)ds\\
	&=\gamma\partial_{t}^{\gamma-1}f(t,x)+\int_{0}^{t}\partial_{t}^{\gamma}\left(\operatorname{W}_{s,t}f(s,x)\right)ds\\
	&=\gamma\partial_{t}^{\gamma-1}f(t,x)+\int_{0}^{t}\partial_{t}^{\gamma}\left(\mathcal{W}(\cdot;s,t)\ast f(s)\right)(x)ds\\
	&=\gamma\partial_{t}^{\gamma-1}f(t,x)+\int_{0}^{t}f(s)\ast\partial^{\gamma}_{t}\mathcal{W}(\cdot;s,t)(x)ds,\\
	&\forall x\in \mathbb{R}^{n},~~\forall t\in \mathbb{R}_{+},~~\forall \gamma\in \mathbb{N}_{0}.
\end{align*}
Therefore, 
$\int\limits^{\cdot}_{0}\operatorname{W}_{s,\cdot}f(s,x)ds\in C^{\infty}(\mathbb{R}_{+}),~~\forall x\in\mathbb{R}^{n}$.

Moreover, referring to Remark \ref{changeorderforx}, we have
\begin{align*}
	\partial^{\beta}_{x}\int\limits^{t}_{0}\operatorname{W}_{s,t}f(s,x)ds&=\partial^{\beta}_{x}\int\limits^{t}_{0}(f(s)\ast\mathcal{W}(\cdot;s,t))(x)ds\\
	&=\int_{0}^{t}\partial_{x}^{\beta}\left(f(s)\ast\mathcal{W}(\cdot;s,t)(x)\right) ds\\
	&=\int_{0}^{t}f(s)\ast\partial^{\beta}_{x}\mathcal{W}(\cdot;s,t)(x)ds,\\
	&\forall x\in \mathbb{R}^{n},~~\forall t\in \mathbb{R}_{+},~~\forall \beta\in \mathbb{N}^{n}_{0}.
\end{align*}
Thus, 
$\int\limits^{t}_{0}\operatorname{W}_{s,t}f(s,\cdot)ds\in C^{\infty}(\mathbb{R}^{n}),~~\forall t\in\mathbb{R}_{+}$.

To prove \eqref{seminorm4}, one has
\begin{align*}
	&\sup_{x\in\mathbb{R}^n} \sup_{t\in K\subset \mathbb{R}_{+}}\left|x^{\alpha}\partial^{\beta}_{x}\partial^{\gamma}_{t}\int\limits_0^t\operatorname{W}_{s,t}f(s,x)ds\right|\\
	&=\sup_{x\in\mathbb{R}^n} \sup_{t\in K\subset \mathbb{R}_{+}}\left|x^{\alpha}\partial^{\beta}_{x}\partial^{\gamma}_{t}\int\limits_0^tf(s)\ast\mathcal{W}({\cdot;s,t}) (x)ds\right|\\
	&=\sup_{x\in\mathbb{R}^n} \sup_{t\in K\subset \mathbb{R}_{+}}\left|x^{\alpha}\partial^{\beta}_{x}\left[\gamma\partial^{\gamma-1}_{t}f(t,x)+\int\limits_0^tf(s)\ast\partial_{t}^\gamma\mathcal{W}({\cdot;s,t}) (x)ds\right]\right|\\
	&=\sup_{x\in\mathbb{R}^n}\sup_{t\in K\subset \mathbb{R}_{+}}\left|\gamma x^{\alpha}\partial^{\beta}_{x}\partial_{t}^{\gamma-1}f(t,x)+x^{\alpha}\partial^{\beta}_{x}\int_{0}^{t}f(s)\ast\partial_{t}^\gamma\mathcal{W}({\cdot;s,t}) (x)ds\right|\notag\\
	&\leq\sup_{x\in\mathbb{R}^n} \sup_{t\in K\subset \mathbb{R}_{+}}\left|\gamma x^{\alpha}\partial^{\beta}_{x}\partial_{t}^{\gamma-1}f(t,x)\right| +\sup_{x\in\mathbb{R}^n} \sup_{t\in K\subset \mathbb{R}_{+}}\left| x^{\alpha}\partial^{\beta}_{x}\int_{0}^{t}f(s)\ast\partial_{t}^\gamma\mathcal{W}({\cdot;s,t}) (x)ds\right|\\	
	&=\sup_{x\in\mathbb{R}^n} \sup_{t\in K\subset \mathbb{R}_{+}}\left|\gamma x^{\alpha}\partial^{\beta}_{x}\partial_{t}^{\gamma-1}f(t,x)\right| +\sup_{x\in\mathbb{R}^n} \sup_{t\in K\subset \mathbb{R}_{+}}\left| \int_{0}^{t}f(s)\ast x^{\alpha}\partial^{\beta}_{x}\partial_{t}^\gamma\mathcal{W}({\cdot;s,t}) (x)ds\right|\\
	&\forall \alpha, \beta\in \mathbb{N}^{n}_{0}, ~\forall\gamma\in\mathbb{N}_0,
\end{align*}
where in the last lines we are allowed to interchange the order of differentiation and integration referring to Remark \ref{Leibniz} and Remark \ref{changeorderforx}. Moreover, since $\mathcal{W}\in\mathcal{S}(\mathbb{R}^n)\otimes C^\infty(\Delta)$ implies the following:

the mapping $$t\mapsto \mathcal{W}(x-\cdot;0,t)\in \mathcal{S}(\mathbb{R}^n),~~\forall x\in \mathbb{R}^n,$$ belongs to $ C^\infty(\mathbb{R}_{+},\mathcal{S}(\mathbb{R}^n)),$ as well as the mapping $$x\mapsto \partial^{\gamma}_{t}\mathcal{W}(x-\cdot;0,t)\in \mathcal{S}(\mathbb{R}^n),~~\forall t\in \mathbb{R}_{+},~~ \forall\gamma\in \mathbb{N}_{0},$$ 
belongs to $ C^\infty(\mathbb{R}^{n},\mathcal{S}(\mathbb{R}^n))$. One may continue
\begin{align}
	&\sup_{x\in\mathbb{R}^n} \sup_{t\in K\subset \mathbb{R}_{+}}\left|x^{\alpha}\partial^{\beta}_{x}\partial^{\gamma}_{t}\int\limits_0^t\operatorname{W}_{s,t}f(s,x)ds\right|\notag\\
	&\leq \sup_{x\in\mathbb{R}^n} \sup_{t\in K\subset \mathbb{R}_{+}}\left|\gamma x^{\alpha}\partial^{\beta}_{x}\partial_{t}^{\gamma-1}f(t,x)\right| +\sup_{x\in\mathbb{R}^n} \sup_{t\in K\subset \mathbb{R}_{+}}\left| \int_{0}^{t}f(s)\ast x^{\alpha}\partial^{\beta}_{x}\partial_{t}^\gamma\mathcal{W}({\cdot;s,t}) (x)ds\right|\notag\\
	&\leq \sup_{x\in\mathbb{R}^n} \sup_{t\in K\subset \mathbb{R}_{+}}\left|\gamma x^{\alpha}\partial^{\beta}_{x}\partial_{t}^{\gamma-1}f(t,x)\right| +\sup_{t\in K\subset \mathbb{R}_{+}}\int_{0}^t\|f(s)\|_{\infty} \|(\cdot)^{\alpha}\partial^{\beta}_{x}\partial_{t}^\gamma\mathcal{W}({\cdot;s,t})\|_{1}ds\notag\\
	&\leq   \sup_{x\in\mathbb{R}^n}\sup_{t\in K\subset \mathbb{R}_{+}}\left|\gamma x^{\alpha}\partial^{\beta}_{x}\partial_{t}^{\gamma-1}f(t,x)\right| +\sup_{s\in [0,\max K]}\|f(s)\|_{\infty}\sup_{t\in K\subset \mathbb{R}_{+}}\int\limits^{t}_{0}\|(\cdot)^{\alpha}\partial^{\beta}_{x}\partial_{t}^\gamma\mathcal{W}({\cdot;s,t})\|_{1}ds <\infty,\notag\\
	&\forall \alpha, \beta\in \mathbb{N}^{n}_{0},~ \forall\gamma\in\mathbb{N}_0,\notag
\end{align}
where the estimate
$$
\sup_{t \in K\subset \mathbb{R}_{+}} \int_0^t \left\|(\cdot)^{\alpha} \partial_x^{\beta} \partial_t^{\gamma} \mathcal{W}(\cdot; s, t) \right\|_{1} ds < \infty,
$$
holds because $ \mathcal{W} \in \mathcal{S}(\mathbb{R}^n) \otimes C^\infty(\Delta)$, implying that the map 
$$
(s, t) \mapsto\left\|(\cdot)^{\alpha} \partial_x^{\beta} \partial_t^{\gamma} \mathcal{W}(\cdot; s, t) \right\|_{1},\quad\forall \alpha, \beta\in \mathbb{N}^{n}_{0}, \forall \gamma\in\mathbb{N}_0
$$
is continuous on $\Delta $. Thus, for fixed $t \in \mathbb{R}_+ $, the integrand is continuous in $ s $. The compactness of $K \subset \mathbb{R}_+$ then ensures finiteness of the supremum. Thus, the proof of \eqref{seminorm4} is complete.

To show that \eqref{tensor} satisfies \eqref{cauchy} by using part $5$ in Proposition \ref{pr2}, we may compute $\partial _{t}u(t,x)$ and $\mathcal{L}_tu(t,x)$ separately. Namely,
\begin{align}\label{utsatisfying}
	\partial _{t}u(t,x)&=\partial_{t}\operatorname{W}_{0,t}u_{0}(x)+\partial_{t}\left(\int\limits_0^t\operatorname{W}_{s,t}f(s,x)ds\right)\notag\\
	&=\partial_{t}(\mathcal{W}(\cdot;0,t)\ast u_{0})(x)+\operatorname{W}_{t,t}f(t,x)+\int\limits_0^t\frac{\partial}{\partial t}\operatorname{W}_{s,t}f(s,x)ds\notag\\
	&=\partial_{t}u_{0}\left(\mathcal{W}(x-\cdot;0,t)\right)+f(t,x)+\int\limits_{0}^t\frac{\partial}{\partial t}(\mathcal{W}(\cdot;s,t)\ast f(s)(x))ds\notag\\
	&=\partial_{t}u_{0}(\mathcal{W}(x-\cdot;0,t)+f(t,x)+\int\limits_{0}^t\frac{\partial}{\partial t}(f(s)(\mathcal{W}(x-\cdot;s,t)))ds\notag\\
	&=u_{0}(\partial_{t}\mathcal{W}(x-\cdot;0,t))+f(t,x)+\int\limits_{0}^tf(s)(\partial_{t}\mathcal{W}(x-\cdot;s,t))ds\notag\\
	&=\partial_{t}\mathcal{W}(\cdot;0,t)\ast u_{0}(x) +f(t,x)+\int\limits_{0}^t\partial_{t}\mathcal{W}(\cdot;s,t)\ast f(s,\cdot)(x)ds,\quad\forall x\in\mathbb{R}^n,~ \forall t\in\mathbb{R}_+.
\end{align}
To compute $\mathcal{L}_{t}u(t,x)$, we use Remark \ref{partialtimesx} and Remark \ref{changeorderforx}. Accordingly, we can write
\begin{align}\label{ltsatisfying}
	\mathcal{L}_{t}u(t,x)&=\mathcal{L}_{t}(\operatorname{W}_{0,t}u_{0}(x))+\mathcal{L}_{t}\left(\int\limits_0^t\operatorname{W}_{s,t}f(s,x)ds\right)\notag\\
	&=\mathcal{L}_{t}(\mathcal{W}(\cdot;0,t)\ast u_{0}(x))+\mathcal{L}_{t}\left(\int\limits_0^t\mathcal{W}(\cdot;s,t)\ast f(s,\cdot)(x)ds\right)\notag\\
	&=\sum_{i,j=1}^{n} a_{ij}(t)\partial^2_{x_ix_j}(\mathcal{W}(\cdot;0,t)\ast u_{0}(x))+\sum_{i,j=1}^{n} a_{ij}(t)\partial^2_{x_ix_j}\left(\int\limits_0^t\mathcal{W}(\cdot;s,t)\ast f(s,\cdot)(x)ds\right)\notag\\
	&=\sum_{i,j=1}^{n} a_{ij}(t)(\partial^2_{x_ix_j}\mathcal{W}(\cdot;0,t))\ast u_{0}(x)+\sum_{i,j=1}^{n} a_{ij}(t)\int\limits_0^t\partial^2_{x_ix_j}\left( \mathcal{W}(\cdot;s,t)\ast f(s,\cdot)(x)\right) ds\notag\\
	&=\left(\sum_{i,j=1}^{n} a_{ij}(t)\partial^2_{x_ix_j}\mathcal{W}(\cdot;0,t)\ast u_{0}\right)(x)+\int\limits_0^t\sum_{i,j=1}^{n} a_{ij}(t)(\partial^2_{x_ix_j}\mathcal{W}(\cdot;s,t))\ast f(s,\cdot)(x)ds\notag\\
	&=\mathcal{L}_{t}\mathcal{W}(\cdot,0,t)\ast u_{0}(x)+\int\limits_0^t\mathcal{L}_{t}\mathcal{W}(\cdot,s,t)\ast f(s,\cdot)(x)ds,\quad\forall x\in\mathbb{R}^n,~~ \forall t\in\mathbb{R}_+.
\end{align}
Now, by using \eqref{utsatisfying} and \eqref{ltsatisfying}, we conclude that
\begin{align}
	\partial_{t}u(t,x)-\mathcal{L}_{t}u(t,x)&=\partial_{t}\mathcal{W}(\cdot;0,t)\ast u_{0}(x) +f(t,x)+\int\limits_{0}^t\partial_{t}\mathcal{W}(\cdot;s,t)\ast f(s,\cdot)(x)ds\notag\\
	&-\mathcal{L}_{t}\mathcal{W}(\cdot,0,t)\ast u_{0}(x)-\int\limits_0^t\mathcal{L}_{t}\mathcal{W}(\cdot,s,t)\ast f(s,\cdot)(x)ds\notag\\
	&=(\partial_{t}\mathcal{W}(\cdot;0,t)-\mathcal{L}_{t}\mathcal{W}(\cdot;0,t))\ast u_{0}(x)+f(t,x)\notag\\
	&+\int\limits_{0}^t(\partial_{t}\mathcal{W}(\cdot;s,t)-\mathcal{L}_{t}\mathcal{W}(\cdot;s,t))\ast f(s,\cdot)(x)ds\notag\\
	&=f(t,x), \quad\forall x\in\mathbb{R}^n,~\forall t\in\mathbb{R}_+.
\end{align}
The last line follows from part 2 of Proposition \ref{pr1}.

Additionally, we have 
\begin{equation*}
	\lim\limits_{t\to0+}u(t,\cdot)=u_0, \quad\forall t\in \mathbb{R}_{+}.
\end{equation*}
The reason is that when we take limit from \eqref{solution} as $t$ goes to $0+$, using part $5$ of Proposition \ref{pr2}, we have 
\begin{align}
	\lim_{t\rightarrow 0^{+}}u(t,\cdot)&=\lim_{t\rightarrow 0^{+}}\operatorname{W}_{0,t}u_0+\lim_{t\rightarrow 0^{+}}\int\limits_0^t\operatorname{W}_{s,t}f(s,\cdot)ds\notag\\
	&=\operatorname{W}_{0,0}u_0=u_{0},\quad\forall t\in\mathbb{R}_+.
\end{align} 
To verify the non-negativity property of the solution $u$,  we note that positivity of $$\mathcal{W}(x; s, t)>0,~~ \forall x \in \mathbb{R}^n, ~ \forall(s, t) \in \Delta,$$ as well as non-negativity of $u_{0}$ and $f$ imply the following: 
$$
\operatorname{W}_{0,t} u_{0}(x) = u_{0}(\mathcal{W}(x - \cdot; 0, t)) \geq 0,\quad\forall x\in\mathbb{R}^n,~~\forall t\in\mathbb{R}_{+},
$$
as well as  
$$
\int_{0}^{t} \operatorname{W}_{s,t} f(s, x) ds = \int_{0}^{t} \mathcal{W}(\cdot; s, t) \ast f(s)(x) ds \geq 0,\quad\forall x\in\mathbb{R}^n,~~\forall t\in\mathbb{R}_{+}.
$$
Thus, by \eqref{solution} the solution $u$  remains non-negative. Thus, the proof is complete. 
\end{proof}

We now derive a collection of estimates, including the energy estimate and $L^p$-$L^q$
bounds, which will be used in subsequent arguments.

\begin{remark}\label{energyestim}
Let $u(t)\in L^{p}(\mathbb{R}^n)\cap \mathcal{D}om(\mathcal{L}_t)$ for $1<p<\infty$ and $ t\in \mathbb{R}_{+}$ and define the energy function as follows: \begin{equation}\label{energy}
	E(t)\doteq \|u(t)\|^{p}_{p}=\int_{\mathbb{R}^n}|u(t,x)|^{p}dx, \quad \forall t\in\mathbb{R}_+.
\end{equation}
Then, taking derivative with respect to $t$ from both sides of  \eqref{energy}, we have 
\begin{align}
	\frac{d}{dt}E(t)&=p\int_{\mathbb{R}^n}|u(t,x)|^{p-1}\frac{u(t,x)}{|u(t,x)|}\partial_{t}u(t,x)dx\notag\\
	&=p\int_{\mathbb{R}^n}|u(t,x)|^{p-2}u(t,x)\partial_{t}u(t,x)dx, \quad \forall t\in\mathbb{R}_+.\notag
\end{align}
Now, multiplying both sides of the heat equation \eqref{cauchy} by $p|u(t,x)|^{p-2}u(t,x)$ and then integrating them over $\mathbb{R}^n$ with respect to $x$, it follows that
\begin{align}\label{energyestimate1}
	&p\int_{\mathbb{R}^n}|u(t,x)|^{p-2}u(t,x)\partial_{t}u(t,x)dx-p\sum_{i,j=1}a_{i,j}(t)\int_{\mathbb{R}^n}|u(t,x)|^{p-2}u(t,x)\frac{\partial^{2}}{\partial x_{i}\partial x_{j}}u(t,x)dx\notag\\&=\int_{\mathbb{R}^n}f(t,x)|u(t,x)|^{p-2}u(t,x)dx,\quad\forall t\in\mathbb{R}_+.
\end{align}
We see that the first term on the left hand side of \eqref{energyestimate1}, is exactly $\frac{d}{dt}E(t)$. To discuss the second term on the left hand side of \eqref{energyestimate1}, using integration by parts, we have
\begin{equation}
	\int\limits_{\mathbb{R}^n}|u(t,x)|^{p-2}u(t,x)\frac{\partial^{2}}{\partial x_{i}\partial x_{j}}u(t,x)dx=-\int\limits_{\mathbb{R}^n}\frac{\partial}{\partial x_{i}}u(t,x)\frac{\partial}{\partial x_{j}}\left(|u(t,x)|^{p-2}u(t,x)\right)dx,\quad\forall t\in\mathbb{R}_+.
\end{equation}
Since 
\begin{equation*}
	\frac{\partial}{\partial x_{j}}\left(|u(t,x)|^{p-2}u(t,x)\right)=(p-2)|u(t,x)|^{p-2}\frac{\partial}{\partial x_{j}}u(t,x),
\end{equation*}
we have
\begin{align}
	&\sum_{i,j=1}a_{i,j}(t)\int_{\mathbb{R}^n}|u(t,x)|^{p-2}u(t,x)\frac{\partial^{2}}{\partial x_{i}\partial x_{j}}u(t,x)dx\notag\\
	&=-(p-1)\sum_{i,j=1}a_{i,j}(t)\int_{\mathbb{R}^n}\frac{\partial}{\partial{x_i}}u(t,x)|u(t,x)|^{p-2}\frac{\partial}{\partial{x_j}}u(t,x)dx,\quad\forall t\in\mathbb{R}_+.
\end{align}
It is worth noting that in the integration by parts calculation, we used the theorem in Exercise 1.16 of \cite{Ruzhansky} and the point that $u\in \mathcal{D}om(\mathcal{L}_t)$.

Considering the source term $f=0$, from \eqref{energyestimate1} one can obtain 
\begin{align}\label{nonincreasingenergy1}
	\frac{d}{dt} E(t)=-p(p-1)\sum_{i,j=1}a_{i,j}(t)\int_{\mathbb{R}^n}\frac{\partial}{\partial{x_i}}u(t,x)|u(t,x)|^{p-2}\frac{\partial}{\partial{x_j}}u(t,x)dx<0,\quad\forall t\in\mathbb{R}_+.
\end{align}
Therefore, the energy function is non-increasing.
Integrating both sides of \eqref{nonincreasingenergy1} with respect to $t$, we have
\begin{equation}
	E(t)-E(0)\leq 0, ~\forall t\in\mathbb{R}_+.
\end{equation}
Hence,
\begin{align}\label{nonincreasingenergy}
	\|u(t,\cdot)\|_p\leq \|u_0\|_p,\quad\forall t\in\mathbb{R}_+,~\forall 1<p<\infty.
\end{align}
\end{remark}

To have $L^p$-$L^q$ estimates for the operator family $\{\operatorname{W}_{s,t}\}_{(s,t)\in\Delta}$, we need the following remark.

\begin{remark}\label{normpkernel}
For every $(s,t)\in\Delta$, we have the identity
$$
\|\mathcal{W}(\cdot;s,t)\|_p=\frac1{p^{\frac{n}{2p}}\left((4\pi)^n\det[A(t)-A(s)]\right)^{\frac{p-1}{2p}}},\quad\forall p\in[1,+\infty],
$$
where $p=+\infty$ is understood as the limit,
$$
\|\mathcal{W}(\cdot;s,t)\|_\infty=\frac1{\sqrt{(4\pi)^n\det[A(t)-A(s)]}}.
$$
This can be easily obtained in view of
$$\|\mathcal{W}(\cdot;s,t)\|_p^p=\int\limits_{\mathbb{R}^n} \dfrac{1}{\sqrt{(4\pi)^{np}(\det[A(t)-A(s)])^p}}e^{-\frac{p}{4}\langle x,[A(t)-A(s)]^{-1}x\rangle}dx,
$$
using the calculation in the proof of part $1$ of Proposition \ref{pr1}. 
\end{remark}
This leads to the following estimates.

\begin{proposition}\label{pr4}
Let $1\le p,q,r\le+\infty$ be such that
$$
\frac1p+\frac1q=\frac1r+1.
$$
\begin{itemize}
\item[1.] For $\forall(s,t)\in\overline{\Delta}$, the operator $\operatorname{W}_{s,t}$ with domain $\mathcal{S}(\mathbb{R}^n)$ extends to a bounded operator $\operatorname{W}_{s,t}:L^q(\mathbb{R}^n)\to L^r(\mathbb{R}^n)$ with operator norm
\begin{equation}\label{estimationws,t}
	\|\operatorname{W}_{s,t}\|_{q\to r}\le\frac1{p^{\frac{n}{2p}}\left((4\pi)^n\det[A(t)-A(s)]\right)^{\frac{p-1}{2p}}}.
\end{equation}
Here, $\operatorname{W}_{t,t}=\mathrm{1}$ is understood for $\forall t\in[0,+\infty)$;

\item[2.] For $\forall v\in L^q(\mathbb{R}^n)$, the map $(s,t)\mapsto\operatorname{W}_{s,t}v$ belongs to
$$
C^\infty(\Delta,L^r(\mathbb{R}^n))\,\cap\, C(\overline{\Delta},L^r(\mathbb{R}^n));
$$

\item[3.] For $\forall(s,t)\in\overline{\Delta}$,
$$
\operatorname{W}_{s,t}L^q(\mathbb{R}^n)\subset W^{q,\infty}(\mathbb{R}^n);
$$

\item[4.] For $\forall v\in L^q(\mathbb{R}^n)$,
$$
\partial_t\operatorname{W}_{s,t}v-\mathcal{L}_t\operatorname{W}_{s,t}v=0.
$$
\end{itemize}
\end{proposition}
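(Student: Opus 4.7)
My plan is to prove all four parts by leveraging Young's convolution inequality together with the explicit kernel norms from Remark \ref{normpkernel} and the Schwartz regularity of $\mathcal{W}$. The core of the argument is Part 1: since $\operatorname{W}_{s,t} v = \mathcal{W}(\cdot;s,t) \ast v$ for $v \in \mathcal{S}(\mathbb{R}^n)$, Young's inequality (applicable by the exponent relation $1/p + 1/q = 1/r + 1$) gives $\|\operatorname{W}_{s,t} v\|_r \leq \|\mathcal{W}(\cdot;s,t)\|_p \|v\|_q$, and plugging in the explicit kernel norm from Remark \ref{normpkernel} produces the claimed bound. The extension from $\mathcal{S}(\mathbb{R}^n)$ to $L^q(\mathbb{R}^n)$ is by density for $q<\infty$; for $q=\infty$ Young's inequality applies directly. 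The convention $\operatorname{W}_{t,t}=1$ at the boundary $s=t$ is consistent with taking $p=1$ (hence $r=q$), where the bound collapses to $1$ because $\|\mathcal{W}(\cdot;s,t)\|_1=1$ by part 1 of Proposition \ref{pr1}, and part 6 of Proposition \ref{pr2} supplies the strong convergence to the identity.

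For Part 2, smoothness on $\Delta$ follows by differentiating under the convolution: Remark \ref{partialtimes} yields $\partial_t^\alpha \partial_s^\beta (\operatorname{W}_{s,t} v) = (\partial_t^\alpha \partial_s^\beta \mathcal{W}(\cdot;s,t)) \ast v$, and since each $\partial_t^\alpha \partial_s^\beta \mathcal{W}(\cdot;s,t)$ is Schwartz with locally $(s,t)$-continuous $L^p$ norm on $\Delta$, Young's inequality combined with a difference-quotient argument upgrades pointwise differentiation to strong differentiability in $L^r$. Continuity up to the diagonal of $\overline{\Delta}$ is handled by density of $\mathcal{S}(\mathbb{R}^n)$ in $L^q$, the uniform $L^q\to L^q$ boundedness of Part 1 in the $p=1$ regime, and the strong convergence furnished by part 6 of Proposition \ref{pr2}. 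Part 3 is analogous: for any multi-index $\beta$, Remark \ref{partialtimesx} gives $\partial_x^\beta (\operatorname{W}_{s,t} v) = (\partial_x^\beta \mathcal{W}(\cdot;s,t)) \ast v$, and Young's inequality with the Hölder conjugate $q'$ of $q$ bounds $\|\partial_x^\beta \operatorname{W}_{s,t} v\|_\infty \leq \|\partial_x^\beta \mathcal{W}(\cdot;s,t)\|_{q'} \|v\|_q < \infty$, placing $\operatorname{W}_{s,t} v$ in the stated Sobolev-type class of smooth functions with all derivatives bounded.

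Part 4 is immediate, since $L^q(\mathbb{R}^n) \hookrightarrow \mathcal{S}'(\mathbb{R}^n)$ and the identity $\partial_t \operatorname{W}_{s,t} v - \mathcal{L}_t \operatorname{W}_{s,t} v = 0$ was already proved for distributions in part 4 of Proposition \ref{pr2}. The main obstacle I anticipate is the continuity up to the boundary $\overline{\Delta}$ in Part 2: the pointwise operator norm delivered by Part 1 diverges as $s \to t$ whenever $p>1$ (because $\det[A(t)-A(s)] \to 0$), so one cannot proceed by uniform operator-norm estimates. The resolution must therefore combine density of Schwartz functions, the uniform $L^q$ bound available only in the $p=1$ case, and the strong convergence from Proposition \ref{pr2} via an $\varepsilon/3$ argument, keeping careful track of which exponent regime is being used at the diagonal.
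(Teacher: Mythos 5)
Your proposal is correct and, for parts 1 and 4, coincides with the paper's own argument (Young's inequality combined with Remark \ref{normpkernel} for the operator norm, and part 4 of Proposition \ref{pr2} via $L^q(\mathbb{R}^n)\subset\mathcal{S}'(\mathbb{R}^n)$). For parts 2 and 3 you take a genuinely more hands-on route. For the smoothness in part 2 the paper does no difference-quotient analysis at all: it notes that $L^q(\mathbb{R}^n)\subset\mathcal{S}'(\mathbb{R}^n)$, so part 2 of Proposition \ref{pr2} already gives $(s,t)\mapsto\operatorname{W}_{s,t}v$ in $C^\infty(\Delta,\mathcal{S}(\mathbb{R}^n))$, and the continuous embedding $\mathcal{S}(\mathbb{R}^n)\hookrightarrow L^r(\mathbb{R}^n)$ then yields $C^\infty(\Delta,L^r(\mathbb{R}^n))$ for free; your Young-plus-difference-quotient upgrade proves the same thing with more work. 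At the diagonal the paper merely invokes parts 5 and 6 of Proposition \ref{pr2}, which strictly speaking give convergence in $\mathcal{S}(\mathbb{R}^n)$ (for Schwartz data) and weak-$*$ convergence in $\mathcal{S}'(\mathbb{R}^n)$, not convergence in $L^r$; your $\varepsilon/3$ scheme (density of $\mathcal{S}$ in $L^q$, the uniform bound $\|\operatorname{W}_{s,t}\|_{q\to q}\le1$ from the $p=1$ case, and part 5 of Proposition \ref{pr2}) is the more honest treatment and is in fact exactly the argument the paper deploys later, in Proposition \ref{pr5}, to show $\lim_{t\to0+}\|\operatorname{W}_{0,t}u_0-u_0\|_q=0$; you are also right that boundary continuity can only be meant in the $r=q$ regime, since $\operatorname{W}_{t,t}=1$ does not map $L^q$ into $L^r$ otherwise (both you and the paper pass over the failure of density of $\mathcal{S}$ in $L^\infty$). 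For part 3 the paper argues simply that $\operatorname{W}_{s,t}v\in\mathcal{S}(\mathbb{R}^n)\subset W^{q,\infty}(\mathbb{R}^n)$ for $(s,t)\in\Delta$; your direct estimate is fine in spirit, but note that with the paper's conventions (see the seminorms $\sup_{t\in K}\|\partial_t^j\partial_x^\ell h(t)\|_q$ in Remark \ref{notation2}) the space $W^{q,\infty}(\mathbb{R}^n)$ consists of smooth functions whose $x$-derivatives of all orders lie in $L^q$, not in $L^\infty$, so you should apply Young with exponent $1$, i.e.\ $\|\partial_x^\beta\operatorname{W}_{s,t}v\|_q\le\|\partial_x^\beta\mathcal{W}(\cdot;s,t)\|_1\|v\|_q$, rather than with the H\"older conjugate $q'$; that one-line change gives exactly the stated inclusion.
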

\begin{proof} 
To prove part $1$, we have
\begin{eqnarray*}
	\operatorname{W}_{s,t}&: \mathcal{S}(\mathbb{R}^n)\subset L^q(\mathbb{R}^n) \longrightarrow \mathcal{S}(\mathbb{R}^n)\subset L^{r}(\mathbb{R}^n),\\
	&v \longmapsto \operatorname{W}_{s,t}v, \quad \forall ~1 \leq q, r \leq \infty.
\end{eqnarray*}
Let $v\in\mathcal{S}(\mathbb{R}^n)\subset L^{q}(\mathbb{R}^n)$ for every $1 \leq q \leq \infty$. In addition, we have
$\mathcal{W}(\cdot;s,t)\in \mathcal{S}(\mathbb{R}^n) \subset L^{p}(\mathbb{R}^n)$, for every $1\leq p \leq \infty$. Therefore, by applying Young's inequality for convolutions, we obtain
\begin{align}\label{Young1}
	\|\operatorname{W}_{s,t}v\|_{r} = \|\mathcal{W}(\cdot;s,t) \ast v \|_{r} \leq \|\mathcal{W}(\cdot;s,t)\|_{p} \|v\|_{q}<\infty,~ \text{where}, 
	~ 1\leq p,q,r \leq \infty, \text{and} ~ \frac{1}{p}+\frac{1}{q}=1+\frac{1}{r}.
\end{align}
Therefore using \eqref{Young1}, and Remark \ref{normpkernel} we have
\begin{align}
	\left\|\operatorname{W}_{s,t}\right\|_{q\rightarrow r}&=
	\sup_{\| v \|_{q}=1} \left\|\operatorname{W}_{s,t}v\right\|_{r}\leq\sup_{\| v \|_{q}=1}\left\|\mathcal{W}(\cdot;s,t)\right\|_{p} \|v\|_{q}\notag\\
	&=\left\|\mathcal{W}(\cdot;s,t)\right\|_{p} = \left((4\pi)^n \det[A(t)-A(s)]\right)^{\frac{1-p}{2p}} p^{-\frac{n}{2p}},~~1\leq p<\infty.\notag
\end{align}
In addition, when $p=\infty$, then
\begin{align*}
	\left\|\operatorname{W}_{s,t}\right\|_{q\rightarrow r} \leq \dfrac{1}{\sqrt{(4\pi)^n \det[A(t)-A(s)]}}, \quad 1\leq q,r \leq \infty.
\end{align*}
Thus, the proof of part $1$ is complete. To begin with proving part 2, we show that for every $v \in L^q(\mathbb{R}^n)$, the mapping
$$
(s,t)\mapsto \operatorname{W}_{s,t}v,
$$
belongs to $C^{\infty}(\Delta, L^r(\mathbb{R}^n))$. 
As  $L^q(\mathbb{R}^n) \subset \mathcal{S}'(\mathbb{R})$, then part $2$ of Proposition \ref{pr2} still remains valid. For every $1 \leq r \leq \infty $, we have $ C^{\infty}(\Delta, \mathcal{S}(\mathbb{R}^n)) \subset C^{\infty}(\Delta, L^r(\mathbb{R}^n))$, based on the following points:  
Firstly, if $v \in \mathcal{S}'(\mathbb{R}^n)$, then $\operatorname{W}_{\cdot,\colon}v \in C^{\infty}(\Delta, \mathcal{S}(\mathbb{R}^n))$.  
Secondly, because $\mathcal{S}(\mathbb{R}^n)$ is continuously embedded in $L^r(\mathbb{R}^n)$, derivatives with respect to the topology of $\mathcal{S}(\mathbb{R}^n)$ are automatically endowed with the topology of  $L^r(\mathbb{R}^n)$.
 Therefore
$$
\operatorname{W}_{\cdot,\colon}v\in C^{\infty}(\Delta, L^r(\mathbb{R}^n)).
$$
To complete the proof of part $2$, we need to show that for every $v \in L^q(\mathbb{R}^n)$, the mapping
$$
(s,t)\mapsto \operatorname{W}_{s,t}v,
$$
belongs to $C(\bar{\Delta}, L^r(\mathbb{R}^n))$.
By the previous calculation we just need to prove that this mapping is continuous at $s=t$.
More precisely, we need to show that if
$$
\forall(\{s_{m}\}^{\infty}_{m=1},\{r_{m}\}^{\infty}_{r=1})\xrightarrow{m\rightarrow \infty} (t,t), ~~\forall (s_{m},r_{m})\in \bar{\Delta},
$$
then,
$$\operatorname{W}_{s_{m},r_{m}}v\xrightarrow{m\rightarrow \infty} v,\quad\forall v \in L^q(\mathbb{R}^n).$$
By parts 5 and 6 of Proposition~\ref{pr2}, this  result follows directly. 

Continuing the proof we are going to prove part 3.
We know that for every $v \in \mathcal{S}'(\mathbb{R}^n)$, $\operatorname{W}_{s,t}v \in \mathcal{S}(\mathbb{R}^n)$ for every $(s,t)\in \Delta$.
Since $L^q(\mathbb{R}^n)\subset \mathcal{S}'(\mathbb{R}^n)$, then for every $v \in L^q(\mathbb{R}^n)$, $\operatorname{W}_{s,t}v \in \mathcal{S}(\mathbb{R}^n)$ for every $(s,t)\in \Delta$. In addition, $ \mathcal{S}(\mathbb{R}^n) \subset W^{q,\infty}(\mathbb{R}^n)$ for every $1\leq q \leq \infty$. Hence, the proof is achieved.

The proof of part 4 follows from part 4 of Proposition \ref{pr2}, since $L^{q}(\mathbb{R}^n)\subset \mathcal{S}'(\mathbb{R}^n)$ for $1\leq q \leq \infty$. 
Hence, the proof of Proposition \ref{pr4} is obtained.
\end{proof}

It is worth noting that, as mentioned in the proof regarding the contraction, we can state the following fact more concisely. If we take $p=1$ and $q=r$ in the above proposition, then $\{\operatorname{W}_{s,t}\}_{(s,t)\in\overline{\Delta}}$ is a monoid of non-expansive operators on $L^q(\mathbb{R}^n)$, $q\in[1,+\infty]$.

Now, we are ready to state the well-posedness of the $L^q$-Cauchy problem of our heat equation.

\begin{proposition}\label{pr5}
Let $u_0\in L^q(\mathbb{R}^n)$ and $f\in C^{\infty}(\mathbb{R}_+,L^{q}(\mathbb{R}^n))\cap C([0,\infty),L^{q}(\mathbb{R}^{n}))$, for some $q\in[1,+\infty]$. Then the function $u:\mathbb{R}_+\times\mathbb{R}^n\to\mathbb{C}$ defined by
\begin{equation}\label{solution1}
	u(t)\doteq\operatorname{W}_{0,t}u_0+\int\limits_0^t\operatorname{W}_{s,t}f(s)ds,\quad\forall t\in\mathbb{R}_+,
\end{equation}
 is the  solution of the Cauchy problem
\begin{align}\label{cauchypr5}
\begin{cases}
	\partial_t u(t)-\mathcal{L}_tu(t)=f(t),\quad\forall t\in\mathbb{R}_+,\\
	\lim\limits_{t\to0+}u(t)=u_0,
\end{cases}
\end{align}
that satisfies
$$
u\in C^\infty(\mathbb{R}_+,L^{q}(\mathbb{R}^n))\,\cap\,C([0,+\infty),L^q(\mathbb{R}^n)),
$$
and 
\begin{equation}\label{normestimatesimple}
	\|u(t)\|_q\le\|u_0\|_q+\int\limits_0^t\|f(s)\|_qds,\quad\forall t\in[0,+\infty).
\end{equation} 
Moreover, \eqref{solution1} is the unique solution of the  Cauchy problem \eqref{cauchypr5} for $q\in(1,\infty)$.
Furthermore, if $u_0\ge0$ and $f\ge0$ then $u\ge0$.
\end{proposition}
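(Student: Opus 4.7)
The plan is to piece together this well-posedness statement out of the operator-theoretic properties of $\{\operatorname{W}_{s,t}\}$ already established in Proposition~\ref{pr4}, Duhamel's principle, and the energy identity of Remark~\ref{energyestim}. Throughout I would view $u$ as an $L^q(\mathbb{R}^n)$-valued function of $t$ and interpret $\int_0^t \operatorname{W}_{s,t}f(s)\,ds$ as a Bochner integral.

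For the regularity and the fact that \eqref{solution1} satisfies the Cauchy problem, I would split $u = u_h + u_p$ with $u_h(t) \doteq \operatorname{W}_{0,t}u_0$ and $u_p(t) \doteq \int_0^t \operatorname{W}_{s,t}f(s)\,ds$. Part 2 of Proposition~\ref{pr4} hands us $u_h \in C^\infty(\mathbb{R}_+, L^q)\cap C([0,\infty),L^q)$ directly, with $u_h(0)=u_0$ since $\operatorname{W}_{0,0}=\mathrm{1}$. For $u_p$ I would differentiate under the integral via Leibniz' rule in the Bochner sense: the boundary term equals $\operatorname{W}_{t,t}f(t)=f(t)$ thanks to continuity on $\overline{\Delta}$, and the kernel derivative $\partial_t\operatorname{W}_{s,t}$ remains integrable on $[0,t]$ by the bounds underlying Proposition~\ref{pr4}; iterating gives $u_p \in C^\infty(\mathbb{R}_+, L^q)$. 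Continuity at $t=0$ is immediate from the elementary estimate $\|u_p(t)\|_q \le \int_0^t \|\operatorname{W}_{s,t}\|_{q\to q}\|f(s)\|_q\,ds \le \int_0^t \|f(s)\|_q\,ds \to 0$, where the non-expansive bound $\|\operatorname{W}_{s,t}\|_{q\to q}\le 1$ comes from \eqref{estimationws,t} with $p=1$.

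Applying $\partial_t-\mathcal{L}_t$ then annihilates $u_h$ by part 4 of Proposition~\ref{pr4}; applied to $u_p$ it gives $f(t) + \int_0^t (\partial_t-\mathcal{L}_t)\operatorname{W}_{s,t}f(s)\,ds = f(t)$ by the same property, so \eqref{cauchypr5} holds. The norm estimate \eqref{normestimatesimple} is precisely the triangle inequality combined with $\|\operatorname{W}_{s,t}\|_{q\to q}\le 1$. Non-negativity carries over verbatim from Proposition~\ref{pr3}: positivity of the kernel $\mathcal{W}(\cdot;s,t)$ implies $\operatorname{W}_{0,t}u_0\ge 0$ and $\operatorname{W}_{s,t}f(s)\ge 0$ pointwise when $u_0,f\ge 0$.

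The main obstacle is uniqueness for $q\in(1,\infty)$. Given two solutions $u_1,u_2$ of \eqref{cauchypr5}, the difference $v \doteq u_1-u_2$ satisfies $\partial_t v - \mathcal{L}_t v = 0$ with $\lim_{t\to 0+}\|v(t)\|_q = 0$. The PDE itself forces $v(t)\in\operatorname{Dom}(\mathcal{L}_t)$ in the $L^q$-sense, so Remark~\ref{energyestim} applies and yields $\frac{d}{dt}\|v(t)\|_q^q \le 0$ on $\mathbb{R}_+$. Integrating over $[\epsilon,t]$ and sending $\epsilon\to 0+$ then forces $\|v(t)\|_q=0$ for all $t$, completing uniqueness. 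The delicate point here is the rigorous justification of the integration by parts used in Remark~\ref{energyestim} at the present level of generality; I expect to need a density/mollification argument to make sure the manipulations of $|u|^{p-2}u$ are legitimate for solutions produced from merely $L^q$ data, and this is where the endpoints $q=1$ and $q=\infty$ genuinely fail.
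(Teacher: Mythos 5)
Your proposal follows essentially the same route as the paper: Duhamel's formula with the operator bounds of Proposition~\ref{pr4} (the case $p=1$, $q=r$) for the regularity and the estimate \eqref{normestimatesimple}, term-by-term application of $\partial_t-\mathcal{L}_t$ for the equation, positivity of the kernel for non-negativity, and the $L^p$ energy inequality of Remark~\ref{energyestim} applied to the difference of two solutions for uniqueness when $q\in(1,\infty)$. The only cosmetic differences are that you obtain continuity of $\operatorname{W}_{0,t}u_0$ at $t=0$ directly from part 2 of Proposition~\ref{pr4} (the paper reproves it by approximating $u_0$ by Schwartz functions), and you explicitly flag the domain/integration-by-parts issue in the energy argument that the paper passes over silently.
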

\begin{proof}
Referring to Proposition \ref{pr4}, part $2$, we have
$$
\operatorname{W}_{\cdot, \colon}v \in C^\infty(\Delta, L^q(\mathbb{R}^n)),\quad \forall v \in L^{q}(\mathbb{R}^n).
$$
Therefore,  $\operatorname{W}_{0,\colon}u_{0} \in C^{\infty}(\mathbb{R}_+, L^{q}(\mathbb{R}^n)),~ \text{for} ~ u_{0}\in L^q(\mathbb{R}^n)\subset \mathcal{S}'(\mathbb{R}^n)$.
In addition,
$f(s)\in L^q(\mathbb{R}^n)\subset\mathcal{S}'(\mathbb{R}^n)$, for every $s \in \mathbb{R}_{+}$. Therefore, for $\forall s\in \mathbb{R}_{+}$, 
$\operatorname{W}_{s,\colon}f(s)\in C^\infty(\mathbb{R}_{+}, L^q(\mathbb{R}^n))$.
Hence, using induction similar to Remark \ref{Leibniz}, we have
\begin{align}
	\frac{\partial^\gamma}{\partial t^\gamma}\left(\int\limits_0^{t}\operatorname{W}_{s,t}f(s,x)ds\right)&=\gamma\partial_{t}^{\gamma-1}W_{t,t}f(t,x)+\int\limits_0^{t}\frac{\partial^\gamma}{\partial_{t}^\gamma}\operatorname{W}_{s,t}f(s,x)ds\notag\\
	&=\gamma\partial_{t}^{\gamma-1}f(t,x)+\int\limits_0^{t}\frac{\partial^\gamma}{\partial_{t}^\gamma}\operatorname{W}_{s,t}f(s,x)ds,\quad \forall x\in \mathbb{R}^{n},~~ \forall t\in \mathbb{R}_{+},~~
	\forall \gamma\in \mathbb{N}_{0}.
\end{align}
Thus, 
$$
\int\limits_0^{\cdot} \operatorname{W}_{s,\cdot}f(s) ds \in C^{\infty}(\mathbb{R}_{+}, L^q(\mathbb{R}^n)).
$$
Finally, it follows that
\begin{equation}\label{cinftylq}
	u \in C^{\infty}(\mathbb{R}_{+},L^{q}(\mathbb{R}^{n})),\quad 1\leq q \leq\infty.
\end{equation}
Now, we want to show that $u \in C([0,\infty), L^q(\mathbb{R}^n))$. To do so, considering \eqref{cinftylq}, we only need to show continuity at $t=0$. We have
$u(0)=\operatorname{W}_{0,0}u_{0}=u_0$ by the definition.
It suffices to prove the following:
\begin{align}\label{contntyatzero}
	\lim_{t \rightarrow 0^{+}}\left\| \operatorname{W}_{0,t}u_{0}-u_{0}\right\|_{q}=0,	\quad \forall t\in\mathbb{R}_{+}.
\end{align}
To prove \eqref{contntyatzero}, since $\mathcal{S}(\mathbb{R}^{n})$ is dense in $L^{q}(\mathbb{R}^{n}),$ then for $u_{0}\in L^{q}(\mathbb{R}^{n})$ and every $\epsilon\in(0,1)$, there exists $\phi\in \mathcal{S}(\mathbb{R}^{n})$ such that \begin{equation}
	\left\| \phi-u_{0}\right\| _{q}<\epsilon.
\end{equation}
On the other hand, using Young's inequality, and part $5$ of Proposition \ref{pr2}, and the fact that convergence in Schwartz implies convergence in $L^{q}(\mathbb{R}^{n})$,
it follows that
\begin{align}
\lim_{t\rightarrow 0+}\left\| \operatorname{W}_{0,t}u_{0}-u_{0}\right\|_{q}&\leq\lim_{t\rightarrow 0+}\left(  \left\| \phi-u_{0}\right\| _{q}+\left\| \mathcal{W}(\cdot;0,t)*(\phi-u_{0})\right\|_{q}+\left\|\phi-\mathcal{W}(\cdot;0,t)*\phi\right\|_{q}\right) \notag\\
&\leq \epsilon+\lim_{t\rightarrow 0+} \left( \left\| \mathcal{W}(\cdot;0,t)\right\| _{1}\left\| \phi-u_{0}\right\| _{q}\right) \notag\\
&\leq \epsilon+\lim_{t\rightarrow 0+} \left\| \phi-u_{0}\right\| _{q}<2\epsilon\notag.
\end{align}
Since $\epsilon$ is arbitrary, then $\lim_{t\rightarrow 0+}\left\| \operatorname{W}_{0,t}u_{0}-u_{0}\right\|_{q}=0.$

Now, we are going to prove the estimate \eqref{normestimatesimple}.
To this end, we take the $L^q$-norm of both sides of \eqref{solution1} and also consider $q=r$ and $p=1$ in \eqref{estimationws,t}. Then we have
\begin{align}
	\|u(t)\|_q &\leq \|\operatorname{W}_{0,t}u_{0}\|_{q}+\int\limits_0^{t}\|\operatorname{W}_{s,t}f(s)\|_{q}ds\notag\\
	&\leq \|u_{0}\|_{q}+\int\limits_0^{t}\|f(s)\|_{q}ds, \quad\forall t\in \mathbb{R}_{+}.
\end{align}
At this stage, we need to show that the solution $u$ satisfies the Cauchy problem \eqref{cauchypr5}. Since $\mathcal{S}(\mathbb{R}^n)$ is continuously embedded in $L^q(\mathbb{R}^n)$, any derivative taken with respect to the topology of $\mathcal{S}(\mathbb{R}^n)$ naturally inherits the topology of $L^q(\mathbb{R}^n)$. Consequently, the verification follows similarly to the corresponding argument in Proposition \ref{pr3}.
Additionally by \eqref{contntyatzero}, we have
\begin{equation*}
	\lim_{t\rightarrow 0^{+}}u(t)=u_{0}, \quad\forall t\in \mathbb{R}_{+}.
\end{equation*}
Here, we aim to demonstrate uniqueness. Suppose $\tilde{u}$ is another solution to \eqref{cauchypr5}. Then,
\begin{equation}\label{cauchypr52}
\begin{cases}
	\partial_t \tilde{u}(t)-\mathcal{L}_t\tilde{u}(t)=f(t),\quad\forall t\in\mathbb{R}_+,\\
	\lim\limits_{t\to0+}\tilde{u}(t)=u_0.
\end{cases}
\end{equation}
Subtracting \eqref{cauchypr5} and \eqref{cauchypr52}, we have
\begin{equation}\label{cauchypr53}
\begin{cases}
	\partial_t \left(u(t)-\tilde{u}(t)\right)-\mathcal{L}_t(u(t)-\tilde{u}(t))=0,\quad\forall t\in\mathbb{R}_+,\\
	\lim\limits_{t\to0+}\left(u(t)-\tilde{u}(t)\right)=0.
\end{cases}
\end{equation}
Using \eqref{nonincreasingenergy} for \eqref{cauchypr53}, we establish uniqueness as follows:
\begin{equation}
	\|u(t)-\tilde{u}(t)\|_p\le\|u_0\|_p=0,\quad\forall t\in[0,+\infty),~~\forall 1<p<\infty.
\end{equation}
To prove the final part concerning the positivity of the solution $u$, we employ arguments analogous to those used in Proposition \ref{pr3}.
\end{proof}

We will see bellow that under certain assumptions we have also growth estimates.

\begin{proposition}\label{pr6}
Let $1\le p,q,r\le+\infty$ be such that
$$
\frac1p+\frac1q=\frac1r+1.
$$
In the terminology of Proposition \ref{pr5}, assume that $n(p-1)\alpha<2p$ and
$$
f\in L^\beta((0,t),L^q(\mathbb{R}^n)),\quad\beta>\frac1{1-\frac{(p-1)n}{2p}},\quad t\in\mathbb{R}_+,\quad 1<\alpha,\beta<\infty,\quad
\frac1\alpha+\frac1\beta=1.
$$
Moreover, if $\lambda_{\min}(a(t))$ is the minimum of the eigenvalues of the matrix $a(t)$, we define the function
$$
F(t)\doteq \int\limits_0^t \lambda_{\min}(a(\tau))d\tau,\quad \forall t\in \mathbb{R}_{+}.
$$
Then, there exist 
$C=\frac{1}{p^{\frac{n}{2p}}(4\pi)^{\frac{n(p-1)}{2p}}}>0$,
such that
\begin{equation}\label{constant}
	\|u(t)\|_r\le C(F(t))^{-\frac{n(p-1)}{2p}}\|u_0\|_q+C\|\|f(\cdot)\|_q\|_{L^\beta((0,t))}\left(\int\limits_0^t\frac{1}{\left(F(t)-F(s)\right)^{\frac{(p-1)n\alpha}{2p}}}ds\right)^{\frac{1}{\alpha}}.
\end{equation}
In particular, if $\lambda_{\min}(a(t))$ is 
bounded below by a positive constant $\gamma$, then there exists a constant $C>0$, as in the previous estimate \eqref{constant}, such that
$$
\|u(t)\|_r\le  C\gamma ^{-\frac{n(p-1)}{2p}}\left(t^{-\frac{n(p-1)\alpha}{2p}}\|u_{0}\|_q+ (t-s)^{\frac{1}{\alpha}-\frac{n(p-1)}{2p}}\|\|f(\cdot)\|_{q}\|_{L^\beta((0,t))}\right),\quad \forall t\in\mathbb{R}_{+}.
$$
\end{proposition}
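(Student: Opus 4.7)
The plan is to start from the explicit representation \eqref{solution1} of $u$ and apply the $L^q\!\to\!L^r$ bound \eqref{estimationws,t} of Proposition \ref{pr4} term by term. By the triangle inequality in $L^r(\mathbb{R}^n)$,
\begin{equation*}
\|u(t)\|_r \le \|\operatorname{W}_{0,t}u_{0}\|_{r}+\int\limits_{0}^{t}\|\operatorname{W}_{s,t}f(s)\|_{r}\,ds
\le \frac{\|u_{0}\|_{q}}{p^{\frac{n}{2p}}\bigl((4\pi)^{n}\det[A(t)]\bigr)^{\frac{p-1}{2p}}}
+\int\limits_{0}^{t}\frac{\|f(s)\|_{q}\,ds}{p^{\frac{n}{2p}}\bigl((4\pi)^{n}\det[A(t)-A(s)]\bigr)^{\frac{p-1}{2p}}}.
\end{equation*}
The remaining task is to replace the determinants by expressions involving $F$.

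The key geometric step is the eigenvalue inequality
\begin{equation*}
\lambda_{\min}\bigl(A(t)-A(s)\bigr)\;=\;\inf_{|v|=1}\int\limits_{s}^{t}\langle v,a(\tau)v\rangle\,d\tau\;\ge\;\int\limits_{s}^{t}\lambda_{\min}(a(\tau))\,d\tau\;=\;F(t)-F(s),\qquad \forall(s,t)\in\overline{\Delta},
\end{equation*}
which follows because $a(\tau)$ is symmetric positive definite so that $\langle v,a(\tau)v\rangle\ge\lambda_{\min}(a(\tau))$ for every unit vector. Since $A(t)-A(s)$ is itself symmetric positive definite, $\det[A(t)-A(s)]\ge\lambda_{\min}(A(t)-A(s))^{n}\ge(F(t)-F(s))^{n}$. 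Substituting this into the bound above and factoring out $C=p^{-\frac{n}{2p}}(4\pi)^{-\frac{n(p-1)}{2p}}$ yields
\begin{equation*}
\|u(t)\|_{r}\;\le\;C\,F(t)^{-\frac{n(p-1)}{2p}}\|u_{0}\|_{q}\;+\;C\int\limits_{0}^{t}\bigl(F(t)-F(s)\bigr)^{-\frac{n(p-1)}{2p}}\|f(s)\|_{q}\,ds.
\end{equation*}

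For the second term I would apply H\"older's inequality with the conjugate pair $(\alpha,\beta)$:
\begin{equation*}
\int\limits_{0}^{t}\bigl(F(t)-F(s)\bigr)^{-\frac{n(p-1)}{2p}}\|f(s)\|_{q}\,ds\;\le\;\Bigl\|\|f(\cdot)\|_{q}\Bigr\|_{L^{\beta}((0,t))}\left(\int\limits_{0}^{t}\frac{ds}{\bigl(F(t)-F(s)\bigr)^{\frac{n(p-1)\alpha}{2p}}}\right)^{\!\frac{1}{\alpha}},
\end{equation*}
which is precisely \eqref{constant}. The assumption $n(p-1)\alpha<2p$ is exactly what guarantees that the exponent of $F(t)-F(s)$ in the inner integral is strictly less than $1$, so the integral is finite (and the condition $\beta>(1-\frac{(p-1)n}{2p})^{-1}$ is just the dual formulation of the same inequality).

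For the final assertion, assume $\lambda_{\min}(a(\tau))\ge\gamma>0$. Then $F(t)-F(s)\ge\gamma(t-s)$, hence
\begin{equation*}
F(t)^{-\frac{n(p-1)}{2p}}\le\gamma^{-\frac{n(p-1)}{2p}}t^{-\frac{n(p-1)}{2p}},\qquad
\int\limits_{0}^{t}\bigl(F(t)-F(s)\bigr)^{-\frac{n(p-1)\alpha}{2p}}ds\;\le\;\gamma^{-\frac{n(p-1)\alpha}{2p}}\frac{t^{1-\frac{n(p-1)\alpha}{2p}}}{1-\frac{n(p-1)\alpha}{2p}},
\end{equation*}
and raising the last integral to the power $1/\alpha$ produces the stated power $t^{\frac{1}{\alpha}-\frac{n(p-1)}{2p}}$; absorbing the numerical factors into the constant $C$ finishes the proof. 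The only non-routine point is the eigenvalue/determinant inequality; the rest is the standard Hardy--Littlewood--Sobolev scheme applied through the Duhamel representation.
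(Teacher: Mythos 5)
Your proof is correct and follows essentially the same route as the paper: the Duhamel representation \eqref{solution1}, the $L^q\to L^r$ bound \eqref{estimationws,t} of Proposition \ref{pr4}, the lower bound $\det[A(t)-A(s)]\ge\bigl(F(t)-F(s)\bigr)^n$ via the minimum eigenvalue, and H\"older's inequality with the conjugate pair $(\alpha,\beta)$, followed by $F(t)-F(s)\ge\gamma(t-s)$ in the bounded-below case. You are in fact slightly more explicit in two spots—justifying the eigenvalue inequality by the variational characterization and actually evaluating the final $s$-integral, which yields the clean powers $t^{-\frac{n(p-1)}{2p}}$ and $t^{\frac{1}{\alpha}-\frac{n(p-1)}{2p}}$ at the cost of a factor $\bigl(1-\tfrac{n(p-1)\alpha}{2p}\bigr)^{-1/\alpha}$ absorbed into $C$—whereas the paper states the eigenvalue bound directly and leaves that integral unevaluated.
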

\begin{proof}
We start the proof by the following calculation
\begin{align}\label{detA(t)lambda}
	\det[A(t)]&\geq (\lambda_{\min}[A(t)])^n \geq(\lambda_{\min}[\int\limits_0^t (a(\tau))d\tau])^n
	\notag\\
	&\geq\left(\int\limits_0^t \lambda_{min}(a(\tau))d\tau\right)^n=(F(t))^n,\quad\forall t\in \mathbb{R}_{+}.
\end{align}
In addition,
\begin{align}\label{detA(t)-A(s)lambda}
	\det[A(t)-A(s)]&\geq (\lambda_{\min}[A(t)-A(s)])^n \geq(\lambda_{\min}[\int\limits_s^t (a(\tau))d\tau])^n\notag\\
	&\geq\left(\int\limits_s^t \lambda_{min}(a(\tau))d\tau\right)^n=(F(t)-F(s))^n,\quad\forall (s,t)\in \Delta.
\end{align}
We now take the $L^r-norm$ from both sides of the solution $u$. Applying the equations \eqref{detA(t)lambda}, \eqref{detA(t)-A(s)lambda}, and \ref{normestimatesimple},  together with H\"{o}lder's inequality for $1<\alpha, \beta<\infty$ satisfying $\frac{1}{\alpha}+\frac{1}{\beta}=1$, and proceeding under the assumption that
$n(p-1)\alpha<2p$ and
$
f\in L^\beta((0,t),L^q(\mathbb{R}^n))$ where $\beta>\frac1{1-\frac{(p-1)n}{2p}}$, we get
\begin{align}
	\|u(t)\|_{r}&\leq \|\operatorname{W}_{0,t}u_{0}\|_r+\int\limits_0^{t}\|\operatorname{W}_{s,t}f(s)\|_{r}ds\notag\\
	&\leq \frac{1}{p^{\frac{n}{2p}}\left((4\pi)^n \det[A(t)]\right)^{\frac{p-1}{2p}}}\|u_{0}\|_{q}
	+\int\limits_0^{t}\frac{1}{p^{\frac{n}{2p}}\left((4\pi)^n\det[A(t)-A(s)]\right)^{\frac{p-1}{2\pi}}}\|f(s)\|_{q}ds\notag\\
	&\leq \frac{1}{p^{\frac{n}{2p}}\left((4\pi)^n(F(t))^n\right)^{\frac{p-1}{2p}}}\|u_{0}\|_{q}
	+\int\limits_0^{t}\frac{1}{p^{\frac{n}{2p}}\left(4\pi(F(t)-F(s))^n\right)^{\frac{p-1}{2\pi}}}\|f(s)\|_{q}ds\notag\\
	&\leq C F(t)^{-\frac{n(p-1)}{2p}}\|u_{0}\|_{q}+C
	\left(\int\limits_0^t \frac{1}{(F(t)-F(s))^{\frac{n(p-1)\alpha}{2p}}}ds\right)^{\frac{1}{\alpha}}\left(\int\limits_0^t \|f(s)\|^\beta_{q}ds\right)^{\frac{1}{\beta}}\notag\\
	&=CF(t)^{-\frac{n(p-1)}{2p}}\|u_{0}\|_q+C\left(\int\limits_0^t \frac{1}{(F(t)-F(s))^{\frac{n(p-1)\alpha}{2p}}}ds\right)^{\frac{1}{\alpha}}\|\|f(\cdot)\|_{q}\|_{L^\beta((0,t))}.
\end{align}	
Now, we apply the special condition $\lambda_{\min}(a(t)) >\gamma>0$, then we can conclude that 
\begin{align}\label{detA(t)gamma}
	\det[A(t)]&\geq\left(\int\limits_0^t \lambda_{min}(a(\tau))d\tau\right)^n\geq (t\gamma)^n,\quad\forall t\in \mathbb{R}_{+}.
\end{align}
In addition,
\begin{align}\label{detA(t)-A(s)gamma}
	\det[A(t)-A(s)]\geq\left(\int_{s}^t \lambda_{min}(a(\tau))d\tau\right)^n\geq ((t-s)\gamma)^n,\quad\forall s,t\in \Delta.
\end{align}
Therefore, we conclude that
\begin{align*}
	\|u(t)\|_{L^r}&\leq \|\operatorname{W}_{0,t}u_{0}\|_r+\int\limits_0^{t}\|\operatorname{W}_{s,t}f(s)\|_{r}ds\notag\\
	&\leq \frac{1}{p^{\frac{n}{2p}}\left((4\pi)^n \det[A(t)]\right)^{\frac{p-1}{2p}}}\|u_{0}\|_{q}
	+\int\limits_0^{t}\frac{1}{p^{\frac{n}{2p}}\left((4\pi)^n\det[A(t)-A(s)]\right)^{\frac{p-1}{2\pi}}}\|f(s)\|_{q}ds\notag\\
	&\leq \frac{1}{p^{\frac{n}{2p}}(4\pi)^{\frac{n(p-1)}{2p}}\gamma^{\frac{n(p-1)}{2p}} t^{\frac{n(p-1)}{2p}}}\|u_{0}\|_{q}
	+\int\limits_0^{t}\frac{1}{p^{\frac{n}{2p}}(4\pi)^{\frac{n(p-1)}{2p}}\gamma^{\frac{n(p-1)}{2p}} (t-s)^{\frac{n(p-1)}{2p}}}\|f(s)\|_{q}ds\notag\\
	&\leq C\gamma ^{-\frac{n(p-1)}{2p}} t^{-\frac{n(p-1)}{2p}}\|u_{0}\|_{q}+C
	\left(\int\limits_0^t \frac{1}{\gamma^{\frac{n(p-1)}{2p}}(t-s)^{\frac{n(p-1)\alpha}{2p}}}ds\right)^{\frac{1}{\alpha}}\left(\int\limits_0^t \|f(s)\|^\beta_{q}ds\right)^{\frac{1}{\beta}}\notag\\
	&=C\gamma ^{-\frac{n(p-1)}{2p}}\left(t^{-\frac{n(p-1)\alpha}{2p}}\|u_{0}\|_q+ (t-s)^{\frac{1}{\alpha}-\frac{n(p-1)}{2p}}\|\|f(\cdot)\|_{q}\|_{L^\beta((0,t))}\right),\quad \forall t\in\mathbb{R}_{+}.
\end{align*}	
\end{proof}

\section{The very weak formulation}
\subsection{Preliminaries}

We begin with, by now standard, definitions in the theory of very weak solutions of PDEs. Let $\mathcal{X}$ be a (real or complex) locally convex topological space, with the system of seminorms $\{\|\cdot\|_\alpha\}_{\alpha\in\aleph}$. In this theory, elements of $h\in\mathcal{X}$ are thought of as ``regular'' items, whereas the ``singular'' items $h$ lying beyond $\mathcal{X}$ are represented by approximation schemes $\{h_\epsilon\}_{\epsilon\in(0,1)}\subset\mathcal{X}$ in one sense or another, with the understanding that $h_\epsilon\xrightarrow[\epsilon\to0]{}h$ in a manner not captured by the topology of $\mathcal{X}$ itself. The theory does not necessarily concern itself with how exactly $h_\epsilon$ approximates a singular object $h$, but rather how badly that approximation fails to correspond to the topology of $\mathcal{X}$. Namely, all seminorms $\|h_\epsilon\|_\alpha$ are allowed to have at most a certain maximal growth in $\frac1\epsilon$. It is clear that this definition is almost vacuous in absolute terms, since a reparameterisation of $\epsilon$ can produce rather arbitrary behaviour as $\epsilon\to0$. And this is in fact a feature, a flexibility of the theory, where the choice of a particular parameterisation is left to the context of application. But we will step even beyond this arbitrariness, by working in the more general framework of asymptotic scales.

Let $(\Upsilon,\prec)$ be a (non-empty) directed set, and $\{\varsigma_\ell\}_{\ell\in\Upsilon}\subset C((0,1),\mathbb{R}_+)$ a family of functions increasing in a neighbourhood of $0$, such that
\begin{equation}
	(\forall\ell,\jmath\in\Upsilon)\quad\ell\prec\jmath\quad\Rightarrow\quad\varsigma_\ell(\epsilon)=\mathfrak{o}[\varsigma_\jmath(\epsilon)]_{\epsilon\to0},\label{ScaleCond1}
\end{equation}
\begin{equation}
	(\forall\ell,\jmath\in\Upsilon)\quad(\exists\hbar\in\Upsilon)\quad\varsigma_\hbar(\epsilon)=\mathfrak{o}[\varsigma_\ell(\epsilon)\varsigma_\jmath(\epsilon)]_{\epsilon\to0}.\label{ScaleCond2}
\end{equation}
Then the system $\mho\doteq\{\varsigma_\ell\}_{\ell\in\Upsilon}$ is called an asymptotic scale.

\begin{definition} 
A net $\{h_\epsilon\}_{\epsilon\in(0,1)}\subset\mathcal{X}$ is called $\mho$-moderate if
$$
(\forall\alpha\in\aleph)\,(\exists\ell_\alpha\in\Upsilon)\quad\varsigma_{\ell_\alpha}(\epsilon)\|h_\epsilon\|_\alpha=\mathfrak{o}[1]_{\epsilon\to0}.
$$
The vector space of all $\mho$-moderate nets in $\mathcal{X}$ will be denoted by $\mathcal{M}^\mho(\mathcal{X})$.
\end{definition}

On the other hand, there is a rate of decay as $\epsilon\to0$ which is deemed to always mean convergence to the zero element $0\in\mathcal{X}$. That rate is the reciprocal of the finite scale growth - the infinite scale decay, and nets with such decay are called negligible.

\begin{definition} A net $\{h_\epsilon\}_{\epsilon\in(0,1)}\subset\mathcal{X}$ is called $\mho$-negligible if
$$
(\forall\alpha\in\aleph)\,(\forall\ell\in\Upsilon)\quad\|h_\epsilon\|_\alpha=\mathfrak{o}[\varsigma_\ell(\epsilon)]_{\epsilon\to0}.
$$
The vector space of all $\mho$-negligible nets in $\mathcal{X}$ will be denoted by $\mathcal{N}^\mho(\mathcal{X})$.
\end{definition}

If the difference of two $\mho$-moderate nets $\{h_\epsilon\}_{\epsilon\in(0,1)},\{g_\epsilon\}_{\epsilon\in(0,1)}\in\mathcal{M}^\mho(\mathcal{X})$ happens to be $\mho$-negligible, $\{h_\epsilon-g_\epsilon\}_{\epsilon\in(0,1)}\in\mathcal{N}^\mho(\mathcal{X})$, then these two approximations are thought to represent the same object. Thus, singular items in our theory are identified with the equivalence classes
$$
[h_\epsilon]_{\epsilon\in(0,1)}\in\mathcal{G}^\mho(\mathcal{X})\doteq\mathcal{M}^\mho(\mathcal{X})/\mathcal{N}^\mho(\mathcal{X}).
$$
We want to think of regular items $h\in\mathcal{X}$ as a particular class of singular ones, which means that we need an embedding
$$
\imath_\mathcal{X}:\mathcal{X}\hookrightarrow\mathcal{G}^\mho(\mathcal{X}).
$$
It is here that the flexibility in choosing $\mho$ comes in handy; it should be such that a practically useful embedding $\imath_\mathcal{X}$ fits in the framework. In practice, the embedding $\imath_\mathcal{X}$ is given by a certain mollifier, but we will not restrict ourselves now to a particular choice. In this context, $\mathcal{G}^\mho(\mathcal{X})$ is often called the $\mho$-Colombeau extension of $\mathcal{X}$.

Let $\mathbb{F}\in\{\mathbb{R},\mathbb{C}\}$. When the $\mathbb{F}$-linear locally convex space $\mathcal{X}$ is the trivial Banach space $\mathbb{F}$, the corresponding Colombeau extension is called the ring of generalised scalars,
$$
\widetilde{\mathbb{F}}^\mho=\mathcal{G}^\mho(\mathbb{F}).
$$
It is called a ring because, as can be checked directly, the $\epsilon$-componentwise multiplication defines a commutative product in it. Moreover, for every $\mathbb{F}$-linear locally convex space $\mathcal{X}$, the Colombeau extension $\mathcal{G}^\mho(\mathcal{X})$ is a $\widetilde{\mathbb{F}}^\mho$-bimodule. In this case, the embedding $\imath_\mathbb{F}$ is given by the constant embedding,
$$
\imath_\mathbb{F}(\lambda)=[\lambda]_{\epsilon\in(0,1)},\quad\forall\lambda\in\mathbb{F}.
$$
We will think of $\mathbb{F}$ as identified with its image $\imath_\mathbb{F}(\mathbb{F})$ in $\widetilde{\mathbb{F}}^\mho$. The restriction of $\widetilde{\mathbb{F}}^\mho$-action to $\mathbb{F}$ agrees with its original $\epsilon$-componentwise action.

If $\Gamma\subset\mathcal{X}$ is a subset, then $\mathcal{M}^\mho_\mathcal{X}(\Gamma)\doteq\mathcal{M}^\mho(\mathcal{X})\cap\Gamma^{(0,1)}$ can be defined, and then
$$
\mathcal{G}^\mho_\mathcal{X}(\Gamma)\doteq\mathcal{M}^\mho_\mathcal{X}(\Gamma)/\mathcal{N}^\mho(\mathcal{X})\subset\mathcal{G}^\mho(\mathcal{X}).
$$
For $\Gamma=\mathbb{R}_+\subset\mathbb{F}$, we will denote $\widetilde{\mathbb{R}}_+^\mho\doteq\mathcal{G}_{\mathbb{F}}^\mho(\mathbb{R}_+)$. If $\Gamma$ is an $\mathbb{R}_+$-cone in $\mathcal{X}$, then $\mathcal{G}^\mho(\Gamma)$ is naturally an $\mathbb{R}_+$-cone in $\mathcal{G}^\mho(\mathcal{X})$, and indeed a $\widetilde{\mathbb{R}}_+^\mho$-cone.

If $\mathcal{X}$ is a space of distributions (including functions, measures etc.), and $\Gamma\doteq\mathcal{X}_+$ is the $\mathbb{R}_+$-cone of non-negative elements, then $\mathcal{G}^\mho(\mathcal{X}_+)\doteq\mathcal{G}^\mho_\mathcal{X}(\mathcal{X}_+)$ will be the corresponding non-negative extension. For $u\in\mathcal{G}^\mho(\mathcal{X})$, we will write $u\ge0$ to mean $u\in\mathcal{G}^\mho(\mathcal{X}_+)$.

On the vector space $\mathcal{G}^\mho(\mathcal{X})$, we will impose the so-called $\mho$-sharp topology, given by the basis of neighbourhoods $\|U_{\alpha,\ell}\|_{\alpha\in\aleph,\ell\in\Upsilon}$ of $0$ defined as follows:
\begin{equation}
	U_{\alpha,\ell}\doteq\left\{[h_\epsilon]_{\epsilon\in(0,1)}\in\mathcal{M}^\mho(\mathcal{X})\;\vline\quad\|h_\epsilon\|_\alpha=\mathfrak{o}[\varsigma_\ell(\epsilon)]_{\epsilon\to0}\right\},\quad\forall\alpha\in\aleph,\quad\forall\ell\in\Upsilon.\label{SharpNeighbourhoods}
\end{equation}
With this topology, $\widetilde{\mathbb{F}}^\mho$ is a topological ring, and $\mathcal{G}^\mho(\mathcal{X})$ is a topological $\widetilde{\mathbb{F}}^\mho$-bimodule. A very detailed development of the topology of Colombeau extensions, rather parallel to the standard theory of locally convex spaces, can be found in \cite{Gar05} for the case of the usual asymptotic scale $\mho=\{\epsilon^a\}_{a\in\mathbb{R}}$. The situation with a more general scale $\mho$ is mostly analogous, but we will not need these sophistications in this work.

Let $\mathcal{X}$ and $\mathcal{Y}$ be two $\mathbb{F}$-linear locally convex spaces, with the systems of seminorms $\{\|\cdot\|_\alpha\}_{\alpha\in\aleph}$ and $\{\|\cdot\|_\beta\}_{\beta\in\beth}$, respectively. We will present a scheme for producing linear continuous maps $\mathcal{G}^\mho(\mathcal{X})\to\mathcal{G}^\mho(\mathcal{Y})$.

\begin{proposition}\label{VWContProp} Let $\{\operatorname{T}_\epsilon\}_{\epsilon\in(0,1)}\subset\mathcal{B}(\mathcal{X},\mathcal{Y})$ be a net of linear continuous operators. Suppose that
$$
(\forall\beta\in\beth)\quad(\exists\ell_\beta\in\Upsilon)\quad(\exists A_\beta\subset\aleph)\quad\sharp A_\beta<\infty\quad\wedge\quad\varsigma_{\ell_\beta}(\epsilon)\|\operatorname{T}_\epsilon\|_{\beta,A_\beta}=\mathfrak{o}[1]_{\epsilon\to0},
$$
where
$$
\|\operatorname{T}_\epsilon\|_{\beta,A_\beta}\doteq\sup\left\{\|\operatorname{T}_\epsilon h\|_\beta\;\vline\quad h\in\mathcal{X},\quad\max\limits_{\alpha\in A_\beta}\|h\|_\alpha\le1\right\}.
$$
Then the map $\operatorname{T}:\mathcal{G}^\mho(\mathcal{X})\to\mathcal{G}^\mho(\mathcal{Y})$ defined by
$$
\operatorname{T}[h_\epsilon]_{\epsilon\in(0,1)}=[\operatorname{T}_\epsilon h_\epsilon]_{\epsilon\in(0,1)},\quad\forall[h_\epsilon]_{\epsilon\in(0,1)}\in\mathcal{G}^\mho(\mathcal{X}),
$$
defines a $\widetilde{\mathbb{F}}^\mho$-linear continuous operator.
\end{proposition}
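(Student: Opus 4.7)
The plan is to verify in sequence that the componentwise assignment $\{h_\epsilon\}\mapsto\{\operatorname{T}_\epsilon h_\epsilon\}$ preserves $\mho$-moderation, preserves $\mho$-negligibility, is $\widetilde{\mathbb{F}}^\mho$-linear on the quotient, and is continuous for the $\mho$-sharp topology. The workhorse throughout is the scale condition \eqref{ScaleCond2}, iterated over finite families in $\Upsilon$ to produce a single scale dominated by a finite product of given ones. The single algebraic input is the tautological bound
$$
\|\operatorname{T}_\epsilon h\|_\beta\le\|\operatorname{T}_\epsilon\|_{\beta,A_\beta}\,\max_{\alpha\in A_\beta}\|h\|_\alpha,\quad\forall h\in\mathcal{X},
$$
valid by homogeneity from the very definition of $\|\operatorname{T}_\epsilon\|_{\beta,A_\beta}$.

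To verify preservation of moderation, fix $\{h_\epsilon\}\in\mathcal{M}^\mho(\mathcal{X})$ and $\beta\in\beth$. Let $\ell_\beta$ and $A_\beta$ be as in the hypothesis and, for each $\alpha\in A_\beta$, pick $\ell_\alpha\in\Upsilon$ with $\varsigma_{\ell_\alpha}(\epsilon)\|h_\epsilon\|_\alpha=\mathfrak{o}[1]$. Iterating \eqref{ScaleCond2} over the finite family $\{\ell_\beta\}\cup\{\ell_\alpha\}_{\alpha\in A_\beta}$ yields some $\tilde\ell\in\Upsilon$ satisfying
$$
\varsigma_{\tilde\ell}(\epsilon)=\mathfrak{o}\!\left[\varsigma_{\ell_\beta}(\epsilon)\prod_{\alpha\in A_\beta}\varsigma_{\ell_\alpha}(\epsilon)\right]_{\epsilon\to0},
$$
whereupon the tautological bound gives $\varsigma_{\tilde\ell}(\epsilon)\|\operatorname{T}_\epsilon h_\epsilon\|_\beta=\mathfrak{o}[1]$. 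Preservation of negligibility uses the same estimate running in the opposite direction: given $\{h_\epsilon\}\in\mathcal{N}^\mho(\mathcal{X})$ and any target scale $\varsigma_m$ in $\mathcal{Y}$, invoke \eqref{ScaleCond2} to choose $\ell^\star$ with $\varsigma_{\ell^\star}=\mathfrak{o}[\varsigma_{\ell_\beta}\varsigma_m]$ and apply the $\mathcal{N}^\mho$-definition of $\{h_\epsilon\}$ at level $\ell^\star$ for each of the finitely many $\alpha\in A_\beta$, concluding $\|\operatorname{T}_\epsilon h_\epsilon\|_\beta=\mathfrak{o}[\varsigma_m]$. Hence $\operatorname{T}$ descends to a well-defined map $\mathcal{G}^\mho(\mathcal{X})\to\mathcal{G}^\mho(\mathcal{Y})$.

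The $\widetilde{\mathbb{F}}^\mho$-linearity is immediate, since the $\mathbb{F}$-linearity of each $\operatorname{T}_\epsilon$ together with the componentwise action of $\widetilde{\mathbb{F}}^\mho$ gives $\operatorname{T}([\lambda_\epsilon][h_\epsilon]+[g_\epsilon])=[\lambda_\epsilon][\operatorname{T}_\epsilon h_\epsilon]+[\operatorname{T}_\epsilon g_\epsilon]$ directly. For continuity in the $\mho$-sharp topology it suffices, by linearity, to check continuity at $0$. Given a basic neighbourhood $U_{\beta,\ell}$ in $\mathcal{G}^\mho(\mathcal{Y})$ as defined by \eqref{SharpNeighbourhoods}, I use \eqref{ScaleCond2} to pick $m\in\Upsilon$ with $\varsigma_m=\mathfrak{o}[\varsigma_\ell\varsigma_{\ell_\beta}]$, and set
$$
V\doteq\bigcap_{\alpha\in A_\beta}U_{\alpha,m},
$$
a finite intersection of basic neighbourhoods of $0$ in $\mathcal{G}^\mho(\mathcal{X})$. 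For $[h_\epsilon]\in V$ the tautological bound combined with the hypothesis yields $\|\operatorname{T}_\epsilon h_\epsilon\|_\beta=\mathfrak{o}[\varsigma_{\ell_\beta}^{-1}\varsigma_m]=\mathfrak{o}[\varsigma_\ell]$, so $\operatorname{T}(V)\subset U_{\beta,\ell}$.

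I expect the main friction to be purely notational: each use of \eqref{ScaleCond2} must be invoked with the right direction of comparison in the order on $\{\varsigma_\ell\}_{\ell\in\Upsilon}$, and the finiteness of $A_\beta$ must be genuinely exploited to close up each iteration (so that only finitely many scales ever need to be combined). Once the scale bookkeeping is set up correctly, every step of the proof is a one-line application of the tautological bound.
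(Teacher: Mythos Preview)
Your proof is correct and follows essentially the same approach as the paper's: the core of both is the observation that for each $\beta\in\beth$ and $\ell\in\Upsilon$ one can use \eqref{ScaleCond2} to find a scale $\varsigma_m=\mathfrak{o}[\varsigma_\ell\varsigma_{\ell_\beta}]$ so that the finite intersection $\bigcap_{\alpha\in A_\beta}U_{\alpha,m}$ is mapped into $U_{\beta,\ell}$, which simultaneously yields well-definedness (intersecting over all $\ell$) and continuity. The only difference is organisational: you separate the moderation, negligibility, and continuity verifications, and you include an explicit moderation-preservation argument (iterating \eqref{ScaleCond2} over $\{\ell_\beta\}\cup\{\ell_\alpha\}_{\alpha\in A_\beta}$) that the paper leaves implicit. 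Note that your moderation step tacitly uses that each $\varsigma_{\ell_\alpha}$ is bounded near $0$ (a consequence of the monotonicity hypothesis on the scale) in order to pass from $\varsigma_{\tilde\ell}=\mathfrak{o}\bigl[\varsigma_{\ell_\beta}\prod_\alpha\varsigma_{\ell_\alpha}\bigr]$ and the tautological bound to $\varsigma_{\tilde\ell}\|\operatorname{T}_\epsilon h_\epsilon\|_\beta=\mathfrak{o}[1]$; this is fine, but worth making explicit.
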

\begin{proof} 
First, we need to show that the operator is well-defined, that is, if $[h_\epsilon]_{\epsilon\in(0,1)}=0$ (or $\{h_\epsilon\}_{\epsilon\in(0,1)}\in\mathcal{N}^\mho(\mathcal{X})$) then $[\operatorname{T}_\epsilon h_\epsilon]_{\epsilon\in(0,1)}=0$ (or $\{\operatorname{T}_\epsilon h_\epsilon\}_{\epsilon\in(0,1)}\in\mathcal{N}^\mho(\mathcal{X})$). Indeed, take any $\ell\in\Upsilon$ and $\beta\in\beth$, and choose $\jmath(\ell,\beta)\in\Upsilon$ using (\ref{ScaleCond2}) so that
$$
\varsigma_{\jmath(\ell,\beta)}(\epsilon)=\mathfrak{o}[\varsigma_\ell(\epsilon)\varsigma_{\ell_\beta}(\epsilon)]_{\epsilon\to0}.
$$
If we take
$$
[h_\epsilon]_{\epsilon\in(0,1)}\in\bigcap_{\alpha\in A_\beta}U_{\alpha,\jmath(\ell,\beta)},
$$
in terms of the definition (\ref{SharpNeighbourhoods}) of neighborhoods, then
$$
\max\limits_{\alpha\in A_\beta}\|h_\epsilon\|_\alpha=\mathfrak{o}[\varsigma_{\jmath(\ell,\beta)}(\epsilon)]_{\epsilon\to0}=\mathfrak{o}[\varsigma_\ell(\epsilon)\varsigma_{\ell_\beta}(\epsilon)]_{\epsilon\to0},
$$
whence
$$
\|\operatorname{T}_\epsilon h_\epsilon\|_\beta\le\|\operatorname{T}_\epsilon\|_{\beta,A_\beta}\cdot\max\limits_{\alpha\in A_\beta}\|h_\epsilon\|_\alpha=\|\operatorname{T}_\epsilon\|_{\beta,A_\beta}\cdot\mathfrak{o}[\varsigma_\ell(\epsilon)\varsigma_{\ell_\beta}(\epsilon)]_{\epsilon\to0}=\mathfrak{o}[\varsigma_\ell(\epsilon)]_{\epsilon\to0},
$$
meaning that
$$
[\operatorname{T}_\epsilon h_\epsilon]_{\epsilon\in(0,1)}\in U_{\beta,\ell}.
$$
Thus, our map acts as
$$
\bigcap_{\alpha\in A_\beta}U_{\alpha,\jmath(\ell,\beta)}\ni[h_\epsilon]_{\epsilon\in(0,1)}\mapsto[\operatorname{T}_\epsilon h_\epsilon]_{\epsilon\in(0,1)}\in U_{\beta,\ell},\quad\forall\beta\in\beth,\quad\forall\ell\in\Upsilon.
$$
Taking the intersection over all $\ell\in\Upsilon$ shows that $\operatorname{T}0=0$, while otherwise it shows that the preimage of a neighbourhood is a neighbourhood in the sharp topology, proving the continuity of $\operatorname{T}$. The $\widetilde{\mathbb{F}}^\mho$-linearity is checked by direct substitution.
\end{proof}

For further reference, let us denote the $\widetilde{\mathbb{F}}^\mho$-module of continuous $\widetilde{\mathbb{F}}^\mho$-linear operators $\operatorname{T}:\mathcal{G}^\mho(\mathcal{X})\to\mathcal{G}^\mho(\mathcal{Y})$ by
$$
\mathcal{B}(\mathcal{G}^\mho(\mathcal{X}),\mathcal{G}^\mho(\mathcal{Y})).
$$
When $\mathcal{X}=\mathcal{Y}$, we will write
$$
\mathcal{B}(\mathcal{G}^\mho(\mathcal{X}))\doteq\mathcal{B}(\mathcal{G}^\mho(\mathcal{X}),\mathcal{G}^\mho(\mathcal{X})).
$$

\begin{corollary}\label{VWContCorr} Every $\operatorname{T}\in\mathcal{B}(\mathcal{X},\mathcal{Y})$ extends to $\operatorname{T}\in\mathcal{B}(\mathcal{G}^\mho(\mathcal{X}),\mathcal{G}^\mho(\mathcal{Y}))$ by setting
	$$
	[(\operatorname{T}h)_\epsilon]_{\epsilon\in(0,1)}\doteq[\operatorname{T}h_\epsilon]_{\epsilon\in(0,1)},\quad\forall[h_\epsilon]_{\epsilon\in(0,1)}\in\mathcal{G}^\mho(\mathcal{X}).
	$$
\end{corollary}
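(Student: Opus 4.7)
The plan is to apply Proposition \ref{VWContProp} with the constant net $\operatorname{T}_\epsilon\doteq\operatorname{T}$ for every $\epsilon\in(0,1)$. Once the hypothesis of that proposition is verified, its conclusion gives exactly the componentwise operator described by the formula in the statement, together with its $\widetilde{\mathbb{F}}^\mho$-linearity and continuity in the sharp topology, so nothing else needs to be done.

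The hypothesis is checked as follows. Since $\operatorname{T}\in\mathcal{B}(\mathcal{X},\mathcal{Y})$, the continuity of $\operatorname{T}$ between the locally convex spaces supplies, for each $\beta\in\beth$, a finite set $A_\beta\subset\aleph$ and a constant $C_\beta\ge0$ such that
$$
\|\operatorname{T}h\|_\beta\le C_\beta\max_{\alpha\in A_\beta}\|h\|_\alpha,\qquad\forall h\in\mathcal{X}.
$$
Consequently $\|\operatorname{T}_\epsilon\|_{\beta,A_\beta}\le C_\beta$ uniformly in $\epsilon$, and it then suffices to exhibit some $\ell_\beta\in\Upsilon$ with $\varsigma_{\ell_\beta}(\epsilon)=\mathfrak{o}[1]_{\epsilon\to0}$.

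Such an element exists purely for structural reasons on the scale: starting from any $\ell\in\Upsilon$, condition (\ref{ScaleCond2}) applied with $\jmath=\ell$ produces some $\hbar\in\Upsilon$ with $\varsigma_\hbar(\epsilon)=\mathfrak{o}[\varsigma_\ell(\epsilon)^2]_{\epsilon\to0}$; since each $\varsigma_\ell$ is increasing in a neighbourhood of $0$ it is bounded on a right neighbourhood of $0$, which forces $\varsigma_\hbar(\epsilon)\to0$ as $\epsilon\to0$. Taking $\ell_\beta=\hbar$ (indeed independent of $\beta$) gives
$$
\varsigma_{\ell_\beta}(\epsilon)\|\operatorname{T}_\epsilon\|_{\beta,A_\beta}\le C_\beta\,\varsigma_{\ell_\beta}(\epsilon)=\mathfrak{o}[1]_{\epsilon\to0},
$$
as required by Proposition \ref{VWContProp}. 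Beyond this small bookkeeping step, there is no substantive obstacle: the corollary is essentially the specialisation of the preceding proposition to a constant operator net, and the only input specific to the present setting is the standard characterisation of continuity between locally convex spaces via finite families of seminorm estimates.
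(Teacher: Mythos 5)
Your proposal is correct and follows essentially the same route as the paper's own proof: both apply Proposition \ref{VWContProp} with the constant net $\operatorname{T}_\epsilon=\operatorname{T}$, invoke the standard finite-seminorm characterisation of continuity to bound $\|\operatorname{T}_\epsilon\|_{\beta,A_\beta}$ uniformly in $\epsilon$, and then use condition (\ref{ScaleCond2}) together with the boundedness of $\varsigma_\ell$ near $0$ (from monotonicity) to produce an index $\hbar$ with $\varsigma_\hbar(\epsilon)=\mathfrak{o}[1]_{\epsilon\to0}$. No gaps; nothing further is needed.
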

\begin{proof} We will apply Proposition \ref{VWContProp} with $\operatorname{T}_\epsilon=\operatorname{T}$ for $\forall\epsilon\in(0,1)$. First note that, as is well-known for bounded operators in locally convex spaces,
$$
(\forall\beta\in\beth)\quad(\exists A_\beta\subset\aleph)\quad\sharp A_\beta<\infty\quad\wedge\quad\|\operatorname{T}\|_{\beta,A_\beta}<\infty.
$$
Since $\Upsilon\neq\emptyset$, take any $\ell_0\in\Upsilon$. By definition, $\varsigma_{\ell_0}\in C((0,1),\mathbb{R}_+)$ is increasing in a neighborhood of $0$, therefore $\limsup\limits_{\epsilon\to0}\varsigma_{\ell_0}(\epsilon)<\infty$. By assumption (\ref{ScaleCond2}), there exists $\hbar\in\Upsilon$ such that
$$
\varsigma_\hbar(\epsilon)=\mathfrak{o}[\varsigma_{\ell_0}(\epsilon)^2]_{\epsilon\to0}=\mathfrak{o}[1]_{\epsilon\to0}.
$$
Setting $\ell_\beta\doteq\hbar$, we see that
$$
\varsigma_{\ell_\beta}(\epsilon)\|\operatorname{T}_\epsilon\|_{\beta,A_\beta}=\varsigma_\hbar(\epsilon)\|\operatorname{T}\|_{\beta,A_\beta}=\mathfrak{o}[1]_{\epsilon\to0},
$$
so that the assumptions of Proposition \ref{VWContProp} are satisfied, yielding the desired assertion.
\end{proof}

\subsection{The very weak formulation of the Cauchy problem}

Let us now turn to the very weak formulation of our heat equation for the case when the diffusivity matrix $a$ is very singular. Thus, we consider the equation
\begin{equation}
	\partial_t u-\mathcal{L}_tu=f,\quad \mathcal{L}_t=\langle\partial_x,a\partial_x\rangle,\label{VWEquation}
\end{equation}
where $a\in\mathcal{G}^\mho(C^\infty(\mathbb{R}_+,\mathrm{GL}(n))\cap C([0,\infty),\mathrm{GL}(n)))$, $f\in\mathcal{G}^\mho(C^\infty(\mathbb{R}_+,\mathcal{S}(\mathbb{R}^n))\cap C([0,\infty),\mathcal{S}(\mathbb{R}^{n})))$ and

 $u\in\mathcal{G}^\mho(C^\infty(\mathbb{R}_+,\mathcal{S}(\mathbb{R}^n)))$. Here, the operators
$$
\partial_t:\mathcal{G}^\mho(C^\infty(\mathbb{R}_+,\mathcal{S}(\mathbb{R}^n)))\longrightarrow\mathcal{G}^\mho(C^\infty(\mathbb{R}_+,\mathcal{S}(\mathbb{R}^n))),
$$
and
$$
\mathcal{L}_t:\mathcal{G}^\mho(C^\infty(\mathbb{R}_+,\mathcal{S}(\mathbb{R}^n)))\longrightarrow\mathcal{G}^\mho(C^\infty(\mathbb{R}_+,\mathcal{S}(\mathbb{R}^n))),
$$
are defined by
\begin{equation}\label{VWDtDef}
	[(\partial_tu)_\epsilon]_{\epsilon\in(0,1)}=[\partial_tu_\epsilon]_{\epsilon\in(0,1)},\quad\forall u=[u_\epsilon]_{\epsilon\in(0,1)}\in\mathcal{G}^\mho(C^\infty(\mathbb{R}_+,\mathcal{S}(\mathbb{R}^n))),
\end{equation}
and
\begin{equation}\label{VWLtDef}
	[(\mathcal{L}_tu)_\epsilon]_{\epsilon\in(0,1)}=\left[\langle\partial_x,a_\epsilon\partial_x\rangle u_\epsilon\right]_{\epsilon\in(0,1)},\quad\forall u=[u_\epsilon]_{\epsilon\in(0,1)}\in\mathcal{G}^\mho(C^\infty(\mathbb{R}_+,\mathcal{S}(\mathbb{R}^n))).
\end{equation}

\begin{proposition}\label{pr8}
The formulae (\ref{VWDtDef}) and (\ref{VWLtDef}) define operators $\partial_t\in\mathcal{B}(\mathcal{G}^\mho(C^\infty(\mathbb{R}_+,\mathcal{S}(\mathbb{R}^n))))$ and $\mathcal{L}_t\in\mathcal{B}(\mathcal{G}^\mho(C^\infty(\mathbb{R}_+,\mathcal{S}(\mathbb{R}^n))))$, respectively.
\end{proposition}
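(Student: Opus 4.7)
The plan is to reduce Proposition \ref{pr8} to the general machinery already built: Corollary \ref{VWContCorr} takes care of $\partial_t$ in a single line, while $\mathcal{L}_t$ falls under the scope of Proposition \ref{VWContProp} because its coefficients are themselves an $\epsilon$-dependent net. Throughout I would work with the standard system of seminorms on $\mathcal{X}\doteq C^\infty(\mathbb{R}_+,\mathcal{S}(\mathbb{R}^n))$,
$$
\|u\|_{K,\alpha,\beta,\gamma}\doteq\sup_{t\in K}\sup_{x\in\mathbb{R}^n}\left|x^\alpha\partial_x^\beta\partial_t^\gamma u(t,x)\right|,
$$
indexed by compact $K\subset\mathbb{R}_+$, $\alpha,\beta\in\mathbb{N}_0^n$ and $\gamma\in\mathbb{N}_0$, and the analogous seminorms $\|a\|_{K,\gamma}\doteq\sup_{t\in K}\max_{i,j}|\partial_t^\gamma a_{ij}(t)|$ on the coefficient space.

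For $\partial_t$, I would note that $\|\partial_t u\|_{K,\alpha,\beta,\gamma}=\|u\|_{K,\alpha,\beta,\gamma+1}$, so $\partial_t\in\mathcal{B}(\mathcal{X})$ is trivial, and Corollary \ref{VWContCorr} directly yields $\partial_t\in\mathcal{B}(\mathcal{G}^\mho(\mathcal{X}))$ via the $\epsilon$-componentwise action (\ref{VWDtDef}).

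For $\mathcal{L}_t$, I set $\operatorname{T}_\epsilon\doteq\langle\partial_x,a_\epsilon\partial_x\rangle$ and apply Proposition \ref{VWContProp}. A Leibniz expansion
$$
x^\alpha\partial_x^\beta\partial_t^\gamma\operatorname{T}_\epsilon u=\sum_{i,j=1}^n\sum_{\gamma_1+\gamma_2=\gamma}\binom{\gamma}{\gamma_1}\,\partial_t^{\gamma_1}a_{\epsilon,ij}(t)\cdot x^\alpha\partial_x^{\beta+e_i+e_j}\partial_t^{\gamma_2}u(t,x)
$$
immediately produces the crucial bound
$$
\|\operatorname{T}_\epsilon u\|_{K,\alpha,\beta,\gamma}\le C_{n,\gamma}\sum_{\substack{\gamma_1+\gamma_2=\gamma\\i,j}}\|a_\epsilon\|_{K,\gamma_1}\,\|u\|_{K,\alpha,\beta+e_i+e_j,\gamma_2},
$$
so each $\operatorname{T}_\epsilon$ is continuous on $\mathcal{X}$ with operator norm $\|\operatorname{T}_\epsilon\|_{(K,\alpha,\beta,\gamma),A}\lesssim \max_{\gamma_1\le\gamma}\|a_\epsilon\|_{K,\gamma_1}$, where $A$ is the finite collection of domain seminorms appearing on the right.

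The final step is to extract a single $\ell_{(K,\alpha,\beta,\gamma)}\in\Upsilon$ producing the decay required by Proposition \ref{VWContProp}. By $\mho$-moderateness of $a$, for every one of the finitely many seminorms $\|\cdot\|_{K,\gamma_1}$ ($\gamma_1\le\gamma$) there exists $\ell_{\gamma_1}\in\Upsilon$ with $\varsigma_{\ell_{\gamma_1}}(\epsilon)\|a_\epsilon\|_{K,\gamma_1}=\mathfrak{o}[1]_{\epsilon\to0}$. Using the directedness of $\Upsilon$ together with the scale conditions \eqref{ScaleCond1}--\eqref{ScaleCond2}, I choose $\ell_\beta\in\Upsilon$ dominated by each $\ell_{\gamma_1}$ in the sense that $\varsigma_{\ell_\beta}(\epsilon)=\mathfrak{o}[\varsigma_{\ell_{\gamma_1}}(\epsilon)]_{\epsilon\to0}$ simultaneously for all $\gamma_1\le\gamma$; then $\varsigma_{\ell_\beta}(\epsilon)\|\operatorname{T}_\epsilon\|_{\beta,A_\beta}=\mathfrak{o}[1]_{\epsilon\to0}$ as required. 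Proposition \ref{VWContProp} then delivers the desired $\widetilde{\mathbb{F}}^\mho$-linear continuous operator $\mathcal{L}_t\in\mathcal{B}(\mathcal{G}^\mho(\mathcal{X}))$ defined by (\ref{VWLtDef}).

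The only genuinely non-mechanical step is the bookkeeping in the last paragraph: verifying that the finitely many scale indices arising from the individual seminorms of $a_\epsilon$ can be amalgamated into a single $\ell_\beta$. Everything else is the Leibniz rule and the explicit form of the seminorms on $\mathcal{X}$.
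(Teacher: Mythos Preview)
Your proof is correct, and for $\partial_t$ it coincides with the paper's: both simply observe that $\partial_t\in\mathcal{B}(\mathcal{X})$ and invoke Corollary~\ref{VWContCorr}.

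For $\mathcal{L}_t$ your route is genuinely different from, and in fact more careful than, the paper's. The paper's proof just asserts that $\mathcal{L}_t:C^\infty(\mathbb{R}_+,\mathcal{S}(\mathbb{R}^n))\to C^\infty(\mathbb{R}_+,\mathcal{S}(\mathbb{R}^n))$ is linear and continuous and again appeals to Corollary~\ref{VWContCorr}. But Corollary~\ref{VWContCorr} is stated for a single $\epsilon$-independent operator $\operatorname{T}$, whereas by (\ref{VWLtDef}) the operator $\langle\partial_x,a_\epsilon\partial_x\rangle$ genuinely varies with $\epsilon$ through the moderate net $a=[a_\epsilon]\in\mathcal{G}^\mho(C^\infty(\mathbb{R}_+,\mathrm{GL}(n))\cap C([0,\infty),\mathrm{GL}(n)))$. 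Your argument via Proposition~\ref{VWContProp} --- a Leibniz estimate giving $\|\operatorname{T}_\epsilon\|_{(K,\alpha,\beta,\gamma),A}\lesssim\max_{\gamma_1\le\gamma}\|a_\epsilon\|_{K,\gamma_1}$, followed by the $\mho$-moderateness of $a$ --- is precisely what is needed to handle this $\epsilon$-dependence. What your approach buys is a rigorous treatment of the singular-coefficient case; what the paper's shortcut buys is brevity, at the cost of glossing over the $\epsilon$-dependence.

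One small point in your write-up: when you amalgamate the finitely many indices $\ell_{\gamma_1}$ into a single $\ell_\beta$ with $\varsigma_{\ell_\beta}=\mathfrak{o}[\varsigma_{\ell_{\gamma_1}}]$ for all $\gamma_1\le\gamma$, directedness of $(\Upsilon,\prec)$ alone furnishes common \emph{upper} bounds, not the common lower bound you need. The correct tool is condition~(\ref{ScaleCond2}) combined with the fact that each $\varsigma_\ell$ is bounded near $0$ (exactly as in the proof of Corollary~\ref{VWContCorr}): iterating (\ref{ScaleCond2}) over the finite family yields an $\hbar$ with $\varsigma_\hbar=\mathfrak{o}\bigl[\prod_{\gamma_1}\varsigma_{\ell_{\gamma_1}}\bigr]$, and boundedness of the other factors then gives $\varsigma_\hbar=\mathfrak{o}[\varsigma_{\ell_{\gamma_1}}]$ for each $\gamma_1$. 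You cite (\ref{ScaleCond1})--(\ref{ScaleCond2}), so the ingredients are there; just make the dependence on (\ref{ScaleCond2}) explicit.
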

\begin{proof} 
As the following two operators 
\begin{eqnarray}\label{mapi}
	\partial_t:&C^\infty(\mathbb{R}_+, \mathcal{S}(\mathbb{R}^n)) \longrightarrow C^\infty(\mathbb{R}_+, \mathcal{S}(\mathbb{R}^n)), \notag\\
	&u \mapsto \partial_t u,
\end{eqnarray}
and 
\begin{eqnarray}\label{mapii}
	\mathcal{L}_t:&C^\infty(\mathbb{R}_+, \mathcal{S}(\mathbb{R}^n)) \longrightarrow C^\infty(\mathbb{R}_+, \mathcal{S}(\mathbb{R}^n)), \notag\\
	&u \mapsto \mathcal{L}_t u.
\end{eqnarray}
are linear and continuous on the space $ C^\infty(\mathbb{R}_+, \mathcal{S}(\mathbb{R}^n)) $, the result will then follow from Corollary \ref{VWContCorr}.
\end{proof}
Thus, we have $\partial_t-\mathcal{L}_t\in\mathcal{B}(\mathcal{G}^\mho(C^\infty(\mathbb{R}_+,\mathcal{S}(\mathbb{R}^n))))$, and the very weak heat equation (\ref{VWEquation}) makes sense. In order to produce a sensible initial condition at $t=0$, we need to work a little more, since the limit $t\to0$ is taken not in the topology of $\mathcal{S}(\mathbb{R}^n)$ but in the distributional topology of $\mathcal{S}'(\mathbb{R}^n)$. In terms of Proposition \ref{pr3}, if a distributional solution $u\in C^\infty(\mathbb{R}_+,\mathcal{S}(\mathbb{R}^n))$ satisfies the initial condition in the weak sense,
$$
\lim_{t\to0+}u(t)=u_0\in\mathcal{S}'(\mathbb{R}^n),\quad\mathcal{S}(\mathbb{R}^n)\hookrightarrow\mathcal{S}'(\mathbb{R}^n),
$$
then we interpret this as
$$
u\in C^\infty(\mathbb{R}_+,\mathcal{S}(\mathbb{R}^n))\,\cap\,C([0,+\infty),\mathcal{S}'(\mathbb{R}^n))
$$
and
$$
\left.u\right|_0=u_0.
$$
It is this formulation that will be extended to the very weak setting. We will consider the locally convex space $C^\infty(\mathbb{R}_+,\mathcal{S}(\mathbb{R}^n))\,\cap\,C([0,+\infty),\mathcal{S}'(\mathbb{R}^n))$ equipped with the union of the families of seminorms of $C^\infty(\mathbb{R}_+,\mathcal{S}(\mathbb{R}^n))$ and $C([0,+\infty),\mathcal{S}'(\mathbb{R}^n))$, respectively.

The operation of restriction to (or evaluation at) $t=0$ induces a linear continuous surjection
$$
\left.\phantom{A}\right|_0:C^\infty(\mathbb{R}_+,\mathcal{S}(\mathbb{R}^n))\cap C([0,+\infty),\mathcal{S}'(\mathbb{R}^n))\longrightarrow\mathcal{S}'(\mathbb{R}^n),
$$
which can be extended to
$$
\left.\phantom{A}\right|_0:\mathcal{G}^\mho(C^\infty(\mathbb{R}_+,\mathcal{S}(\mathbb{R}^n))\,\cap\, C([0,+\infty),\mathcal{S}'(\mathbb{R}^n)))\longrightarrow\mathcal{G}^\mho(\mathcal{S}'(\mathbb{R}^n)).
$$
This allows us to set up initial conditions as
$$
\left.u\right|_0=u_0,\quad u\in\mathcal{G}^\mho(C^\infty(\mathbb{R}_+,\mathcal{S}(\mathbb{R}^n))\cap C([0,+\infty),\mathcal{S}'(\mathbb{R}^n))),\quad u_0\in\mathcal{G}^\mho(\mathcal{S}'(\mathbb{R}^n)).
$$
Together with equation (\ref{VWEquation}), this gives the very weak analogue of the Cauchy problem in Proposition \ref{pr3}.

\subsection{The very weak solution maps}

We will need our heat semigroup operators to act on singular items. Operators $\{\mathrm{W}_{s,t}\}_{(s,t)\in\bar\Delta}$, defined originally as
$$
\mathrm{W}_{s,t}\in\mathcal{B}(\mathcal{S}(\mathbb{R}^n))\,\cap\,\mathcal{B}(\mathcal{S}'(\mathbb{R}^n),\mathcal{S}(\mathbb{R}^n)),\quad\forall(s,t)\in\bar\Delta,
$$
extend naturally with the help of Corollary \ref{VWContCorr} to
$$
\mathrm{W}_{s,t}\in\mathcal{B}(\mathcal{G}^\mho(\mathcal{S}(\mathbb{R}^n)))\,\cap\,\mathcal{B}(\mathcal{G}^\mho(\mathcal{S}'(\mathbb{R}^n)),\mathcal{G}^\mho(\mathcal{S}(\mathbb{R}^n))),\quad\forall(s,t)\in\bar\Delta.
$$
In the following remark, we denote some necessary notation concerning seminorms of locally convex topological spaces $C^{\infty}(\mathbb{R}_+, \mathcal{S}(\mathbb{R}^n))$, $C([0,+\infty),\mathcal{S}(\mathbb{R}^n))$, and $C([0,+\infty),\mathcal{S}'(\mathbb{R}^n))$, as well as $C^\infty(\mathbb{R}_+,W^{q,\infty}(\mathbb{R}^n))$ and $C([0,+\infty),L^q(\mathbb{R}^n))$ which will be used in the sequel.
\begin{remark}\label{notation2}
Let $h\in C^{\infty}(\mathbb{R}_+, \mathcal{S}(\mathbb{R}^n))$ and $K\subset\mathbb{R}_+$ be a compact set. Then, 
for every $\alpha, \beta\in\mathbb{N}_0^n$, $\gamma\in\mathbb{N}_0$, we denote
\begin{equation}
	\left\| h \right\|_{\alpha,\beta,\gamma, K} 
	\doteq \sup\limits_{t \in K} \sup\limits_{x \in \mathbb{R}^n} \left| x^\alpha \partial_x^\beta \partial_t^\gamma  h (t, x) \right|.
\end{equation}
Moreover, let $h\in C([0,+\infty),\mathcal{S}(\mathbb{R}^n))$ and $b\in\mathbb{N}$. Then, we denote for every $\alpha,\beta \in \mathbb{N}_0^n$
\begin{equation}
	\|h\|_{\alpha,\beta,b}\doteq \sup_{t \in [0,b]}\|h(t)\|_{\alpha,\beta}.
\end{equation}
Furthermore, let $h\in C([0,+\infty),\mathcal{S}'(\mathbb{R}^n))$ and $b\in\mathbb{N}$. Then, there exists $F=\{(\alpha_{i},\beta_{i})\}_{i=1}^d$ for some $d\in\mathbb{N}$ such that
\begin{equation}
	\|h\|_{F,b}\doteq \sup_{t\in[0,b]}\left\|h(t)  \right\|_{F},
\end{equation} 
where $\|\cdot\|_F$ is introduced in Remark \ref{notation}.

In addition, let $h\in C^\infty(\mathbb{R}_+,W^{q,\infty}(\mathbb{R}^n))$ and $K\subset\mathbb{R}_+$ be a compact set. Then, 
for every $j\in\mathbb{N}_0$, $\ell\in\mathbb{N}_0^n$, and $q\in [1,\infty]$ we denote
$$
\|h\|_{q,j,\ell,K}\doteq\sup_{t\in K}\left\|\partial^{j}_{t}\partial_{x}^{\ell} h (t)\right\|_q.
$$
Finally,
let $h\in C([0,+\infty),L^q(\mathbb{R}^n))$ and $b\in\mathbb{N}$, we denote
$$
\|h\|_{q,b}\doteq \sup_{t\in[0,b]}\|h(t)\|_q.
$$
\end{remark}
Moreover, the following homogeneous and inhomogeneous solution maps can be constructed.
\begin{proposition}\label{pr9}
 The solution maps, homogeneous
$$
\mathrm{W}_{0,\natural}:\mathcal{S}'(\mathbb{R}^n)\longrightarrow C^\infty(\mathbb{R}_+,\mathcal{S}(\mathbb{R}^n))\,\cap\,C([0,+\infty),\mathcal{S}'(\mathbb{R}^n)),
$$
and inhomogeneous
$$
\int\limits_0^\natural\mathrm{W}_{\sharp,\natural}:C^\infty(\mathbb{R}_+,\mathcal{S}(\mathbb{R}^n))\cap C([0,\infty),\mathcal{S}(\mathbb{R}^n))\longrightarrow C^\infty(\mathbb{R}_+,\mathcal{S}(\mathbb{R}^n))\,\cap\,C([0,+\infty),\mathcal{S}'(\mathbb{R}^n)),
$$
defined by
$$
\mathrm{W}_{0,\natural}h(t)=\mathrm{W}_{0,t}h,\quad\forall t\in[0,+\infty),\quad\forall h\in\mathcal{S}'(\mathbb{R}^n),
$$
and
$$
\int\limits_0^\natural\mathrm{W}_{\sharp,\natural}h(t)=\int\limits_0^t\mathrm{W}_{s,t}h(s)ds,\quad\forall t\in[0,+\infty),\quad\forall h\in C^\infty(\mathbb{R}_+,\mathcal{S}(\mathbb{R}^n))\cap C([0,\infty),\mathcal{S}(\mathbb{R}^n)),
$$
respectively, extend to
\begin{equation}\label{homo}
	\mathrm{W}_{0,\natural}\in\mathcal{B}\bigl(\mathcal{G}^\mho(\mathcal{S}'(\mathbb{R}^n)),\mathcal{G}^\mho(C^\infty(\mathbb{R}_+,\mathcal{S}(\mathbb{R}^n))))\,\cap\,C([0,+\infty),\mathcal{S}'(\mathbb{R}^n))\bigr),
\end{equation}
and
\begin{equation}\label{inhomo}
	\int\limits_0^\natural\mathrm{W}_{\sharp,\natural}\in\mathcal{B}\bigl(\mathcal{G}^\mho(C^\infty(\mathbb{R}_+,\mathcal{S}(\mathbb{R}^n))\cap\,C([0,+\infty),\mathcal{S}(\mathbb{R}^n))),\mathcal{G}^\mho(C^\infty(\mathbb{R}_+,\mathcal{S}(\mathbb{R}^n))\,\cap\,C([0,+\infty),\mathcal{S}'(\mathbb{R}^n))\bigr).
\end{equation}
\end{proposition}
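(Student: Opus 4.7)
The plan is to apply Corollary \ref{VWContCorr} to both maps. This reduces the task to showing that, at the non-Colombeau level, $\mathrm{W}_{0,\natural}$ and $\int_0^\natural\mathrm{W}_{\sharp,\natural}$ are $\mathbb{F}$-linear and continuous between the stated locally convex spaces, where the target is equipped with the union of the two seminorm systems described in Remark \ref{notation2}. Linearity is immediate from the fact that each $\mathrm{W}_{s,t}$ is a convolution operator. Continuity will then follow by combining the pointwise seminorm estimates already derived inside the proof of Proposition \ref{pr3} (which handle the $C^\infty(\mathbb{R}_+,\mathcal{S}(\mathbb{R}^n))$-part of the target topology) with a duality argument for the $C([0,+\infty),\mathcal{S}'(\mathbb{R}^n))$-part.

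For the homogeneous map, the $C^\infty(\mathbb{R}_+,\mathcal{S}(\mathbb{R}^n))$-seminorms $\|\mathrm{W}_{0,\natural}h\|_{\alpha,\beta,\gamma,K}$ are bounded by finitely many $\mathcal{S}'$-seminorms of $h$ by precisely estimate \eqref{supsup} in Proposition \ref{pr3}. For the $\|\cdot\|_{F',b}$-seminorms I would pair with $\phi\in\mathcal{S}(\mathbb{R}^n)$ and exploit that $\mathcal{W}(\cdot;0,t)$ is an even Schwartz function to write
\[
(\mathrm{W}_{0,t}h)(\phi)=h(\mathcal{W}(\cdot;0,t)\ast\phi).
\]
Applying Remark \ref{notation} to $h$ bounds the right-hand side by a finite combination $\|h\|_{F''}\sum_i\|\mathcal{W}(\cdot;0,t)\ast\phi\|_{\alpha_i,\beta_i}$; these Schwartz seminorms are in turn dominated uniformly in $t\in[0,b]$ by finitely many Schwartz seminorms of $\phi$ by the standard convolution estimate $\|g\ast\phi\|_{\alpha,\beta}\lesssim \|g\|_1\|\phi\|_{\alpha,\beta}+\|(1+|\cdot|)^{|\alpha|}g\|_1\|\phi\|_{0,\beta}$, combined with the continuous extension of $(s,t)\mapsto\mathcal{W}(\cdot;s,t)$ to $\overline\Delta\to\mathcal{S}(\mathbb{R}^n)$ at $s=t$ (parts 5 and 6 of Proposition \ref{pr2}).

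For the inhomogeneous map, the $C^\infty(\mathbb{R}_+,\mathcal{S}(\mathbb{R}^n))$-seminorms of $\int_0^t\mathrm{W}_{s,t}f(s)\,ds$ are controlled by Schwartz seminorms of $f$ on compact $t$-intervals by estimate \eqref{seminorm4}. For the dual-topology part, a similar pairing with $\phi\in\mathcal{S}(\mathbb{R}^n)$ together with Young's inequality yields
\[
\left|\int_0^t\mathrm{W}_{s,t}f(s)(\phi)\,ds\right|\leq\int_0^t\|f(s)\|_\infty\,\|\mathcal{W}(\cdot;s,t)\ast\phi\|_1\,ds,
\]
whose supremum over $t\in[0,b]$ is bounded by the sup-norm of $f$ on $[0,b]$ multiplied by finitely many Schwartz seminorms of $\phi$. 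Having verified continuity, Corollary \ref{VWContCorr} applied with the $\epsilon$-independent operators delivers the extensions \eqref{homo} and \eqref{inhomo}.

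The principal obstacle is bookkeeping: the target topology is a union of two distinct seminorm families, and one must match each seminorm of the target to a specific finite set $A_\beta$ of seminorms of the source so that the operator-norm-type hypothesis of Proposition \ref{VWContProp} (as used in Corollary \ref{VWContCorr}) is met. The critical ingredient that makes all estimates uniform up to the boundary point $t=0$ is the joint continuity of $\mathcal{W}$ on $\overline\Delta$ together with the convention $\mathrm{W}_{t,t}=\mathrm{I}$, which is precisely what is supplied by Propositions \ref{pr1}, \ref{pr2} and \ref{pr4}; without this closure to $t=0$ the $\|\cdot\|_{F,b}$ and $\|\cdot\|_{\alpha,\beta,b}$ seminorms of the target could fail to be finite.
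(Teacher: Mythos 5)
Your overall route is the paper's: reduce everything to linearity and continuity of the two classical maps into each of the two factor spaces (with the seminorms of Remark \ref{notation2}), estimate the $C^\infty(\mathbb{R}_+,\mathcal{S}(\mathbb{R}^n))$-seminorms by the arguments behind \eqref{supsup} and \eqref{seminorm4}, handle the $C([0,+\infty),\mathcal{S}'(\mathbb{R}^n))$-seminorms by pairing with $\phi\in\mathcal{S}(\mathbb{R}^n)$, using evenness of the kernel and Young-type convolution bounds, and finally invoke Corollary \ref{VWContCorr} with $\epsilon$-independent operators. That is exactly how the paper proves the four continuity statements and concludes.

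There is, however, one incorrectly justified step. You attribute the uniformity of the $\|\cdot\|_{F',b}$-estimates down to $t=0$ to ``the continuous extension of $(s,t)\mapsto\mathcal{W}(\cdot;s,t)$ to $\overline\Delta\to\mathcal{S}(\mathbb{R}^n)$'' and, in your closing paragraph, to ``the joint continuity of $\mathcal{W}$ on $\overline\Delta$.'' No such extension exists: as $t-s\to0+$ the kernel concentrates and converges to $\delta$ only in $\mathcal{S}'(\mathbb{R}^n)$ (parts 3--4 of Proposition \ref{pr1}); parts 5--6 of Proposition \ref{pr2} concern the nets $\operatorname{W}_{s-\delta,s+\epsilon}v$ for fixed $v$, not the kernel as an $\mathcal{S}(\mathbb{R}^n)$-valued map, so they cannot deliver what you cite them for. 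Fortunately the ingredient you actually need is weaker and true, and it is what the paper uses through the \eqref{convolution}-type bound: $\|\mathcal{W}(\cdot;0,t)\|_1=1$ for all $t>0$, and the weighted norms $\|(1+|\cdot|^2)^{|\alpha|}\mathcal{W}(\cdot;0,t)\|_1$ stay bounded as $t\to0+$, since the Gaussian moments are controlled by powers of $\operatorname{Tr}[A(t)]\to0$. With that substitution your convolution estimate
\begin{equation*}
\|g\ast\phi\|_{\alpha,\beta}\lesssim \|g\|_1\,\|\phi\|_{\alpha,\beta}+\bigl\|(1+|\cdot|)^{|\alpha|}g\bigr\|_1\,\|\phi\|_{0,\beta},\qquad g=\mathcal{W}(\cdot;0,t),
\end{equation*}
is uniform on $[0,b]$ for the right reason, and the same remark applies to your inhomogeneous estimate, where only $\|\mathcal{W}(\cdot;s,t)\|_1=1$ enters; the convention $\operatorname{W}_{t,t}=\mathrm{1}$ plays no role in the finiteness of these seminorms. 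Once this justification is repaired, your proposal coincides with the paper's proof.
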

\begin{proof}
To prove \eqref{homo} and \eqref{inhomo}, we begin by establishing the continuity and linearity of the following maps:
\begin{itemize}
\item [i.]
\begin{align}\label{i}
	\mathrm{W}_{0,\natural}&:\mathcal{S}'(\mathbb{R}^n)\longrightarrow C^\infty(\mathbb{R}_+,\mathcal{S}(\mathbb{R}^n)),\notag\\
	&\mathrm{W}_{0,\natural}h(t)=\mathrm{W}_{0,t}h,\quad\forall t\in[0,+\infty),\quad\forall h\in\mathcal{S}'(\mathbb{R}^n),
\end{align}
\item[ii.]
\begin{align}\label{ii}
	\mathrm{W}_{0,\natural}&:\mathcal{S}'(\mathbb{R}^n)\longrightarrow C([0,+\infty),\mathcal{S}'(\mathbb{R}^n)),\notag\\
	&\mathrm{W}_{0,\natural}h(t)=\mathrm{W}_{0,t}h,\quad\forall t\in[0,+\infty),\quad\forall h\in\mathcal{S}'(\mathbb{R}^n),
\end{align}
as well as
\item [iii.]
\begin{align}\label{iii}
	\int\limits_0^\natural\mathrm{W}_{\sharp,\natural}&:C^\infty(\mathbb{R}_+,\mathcal{S}(\mathbb{R}^n))\cap\,C([0,+\infty),\mathcal{S}(\mathbb{R}^n))\longrightarrow C^\infty(\mathbb{R}_+,\mathcal{S}(\mathbb{R}^n)),\notag\\
	&\int\limits_0^\natural\mathrm{W}_{\sharp,\natural}h(t)=\int\limits_0^t\mathrm{W}_{s,t}h(s)ds,\quad\forall t\in[0,+\infty),\quad\forall h\in C^\infty(\mathbb{R}_+,\mathcal{S}(\mathbb{R}^n))\cap\,C([0,+\infty),\mathcal{S}(\mathbb{R}^n)),
\end{align}
\item[iv.]
\begin{align}\label{iv}
	\int\limits_0^\natural\mathrm{W}_{\sharp,\natural}&:C^\infty(\mathbb{R}_+,\mathcal{S}(\mathbb{R}^n))\cap\,C([0,+\infty),\mathcal{S}(\mathbb{R}^n))\longrightarrow C([0,+\infty),\mathcal{S}'(\mathbb{R}^n)),\notag\\
	&\int\limits_0^\natural\mathrm{W}_{\sharp,\natural}h(t)=\int\limits_0^t\mathrm{W}_{s,t}h(s)ds,\quad\forall t\in[0,+\infty),\quad\forall h\in C^\infty(\mathbb{R}_+,\mathcal{S}(\mathbb{R}^n))\cap\,C([0,+\infty),\mathcal{S}(\mathbb{R}^n)).
\end{align}
Then, Corollary \ref{VWContCorr} allows us to complete the proof.
\end{itemize}
Linearity of operators \eqref{i}, \eqref{ii}, \eqref{iii}, and \eqref{iv} are immediate.

To prove the continuity of \eqref{i}, let $K \subset \mathbb{R}_+$ be a compact set and $\alpha, \beta\in\mathbb{N}_0^n$, $\gamma\in\mathbb{N}_0$. Then, since $h\in\mathcal{S}'(\mathbb{R}^n)$, it follows from Remark \ref{notation} that there exists a finite set $F=\{(\alpha_{i}, \beta_{i})\}_{i=1}^d\subset\mathbb{N}_0^{2n}$ for some $d\in\mathbb{N}$ such that
\begin{align*}
	\left\| \mathrm{W}_{0,\natural} h \right\|_{\alpha,\beta,\gamma, K} 
	&= \sup_{t \in K} \sup_{x \in \mathbb{R}^n} \left| x^\alpha \partial_x^\beta \partial_t^\gamma \left( \mathcal{W}(\cdot;0,t) * h  (x)\right)\right| = \sup_{t\in K}\sup_{x\in\mathbb{R}^n}\left|x^{\alpha}\partial_{x}^{\beta}\partial_t^{\gamma}(h(\mathcal{W}(x-\cdot;0,t)))\right|\\
	&=\sup_{t\in K}\sup_{x\in\mathbb{R}^n}\left|(h(x^{\alpha}\partial_{x}^{\beta}\partial_t^{\gamma}\mathcal{W}(x-\cdot;0,t)))\right| \leq \| h \|_{F} \sum_{i=1}^d \sup_{t \in K} \left\| (\cdot)^\alpha \partial_x^\beta \partial_t^\gamma \mathcal{W}(\cdot - \colon; 0, t) \right\|_{\alpha_i, \beta_i}\\
	&\lesssim \| h \|_{F}.
\end{align*}
It is worth noting that in the above estimate, we used the fact that since $\mathcal{W} \in \mathcal{S}(\mathbb{R}^n) \otimes C^\infty(\Delta)$, the mapping
$$
 t \mapsto \mathcal{W}(x - \cdot; 0, t) \in \mathcal{S}(\mathbb{R}^n),
$$
belongs to $C^\infty(\mathbb{R}_+, \mathcal{S}(\mathbb{R}^n))$. Hence, the proof is complete.

To prove \eqref{ii}, let $b\in\mathbb{N}$ and $F'=\{(\alpha_i',\beta_i')\}_{i=1}^{d'}\subset\mathbb{N}_0^{2n}$ for some $d'\in\mathbb{N}$. Since $h\in\mathcal{S}'(\mathbb{R}^n)$, it follows from Remark \ref{notation} that there exists a finite set $F=\{(\alpha_{i}, \beta_{i})\}_{i=1}^d\subset \mathbb{N}_0^{2n}$ for some $d\in\mathbb{N}$, such that for every $\phi\in\mathcal{S}(\mathbb{R}^n)$,
\begin{align}
	&\|\mathrm{W}_{0,\natural} h\|_{F',b}=\sup_{t\in[0,b]}\left\|\left(\mathrm{W}_{0,\natural} h \right)(t)  \right\|_{F'}=\sup_{t\in[0,b]}\left\|\mathrm{W}_{0,t} h \right\|_{F'}\notag\\
	&=\sup_{t\in[0,b]}\sup_{\substack{\phi\in \mathcal{S}\\\sum_{i=1}^{d'}\|\phi\|_{\alpha'_i,\beta'_i}\leq 1}}\left| \mathrm{W}_{0,t} h(\phi)\right| =\sup_{t\in[0,b]}\sup_{\substack{\phi\in \mathcal{S}\\\sum_{i=1}^{d'}\|\phi\|_{\alpha'_i,\beta'_i}\leq 1}}\left|\left(\mathcal{W}(\cdot; 0, t)*h\right)(\phi)\right| \notag\\
 	&=\sup_{t\in[0,b]}\sup_{\substack{\phi\in \mathcal{S}\\\sum_{i=1}^{d'}\|\phi\|_{\alpha'_i,\beta'_i}\leq 1}}\left|h(\phi*\mathcal{W}(\cdot; 0, t))\right| \notag\\
	&\leq \sup_{t\in[0,b]}\sup_{\substack{\phi\in \mathcal{S}\\\sum_{i=1}^{d'}\|\phi\|_{\alpha'_i,\beta'_i}\leq 1}}\|h\|_F\sum^{d}_{i=1}\left\| \phi*\mathcal{W}( \cdot; 0, t)\right\| _{\alpha_{i},\beta_{i}},
\end{align}
By a similar argument as in \eqref{convolution}, we get that 
\begin{align}
	&\|\mathrm{W}_{0,\natural} h\|_{F',b}\leq\sup_{t\in[0,b]}\sup_{\substack{\phi\in \mathcal{S}\\\sum_{i=1}^{d'}\|\phi\|_{\alpha'_i,\beta'_i}\leq 1}}\|h\|_F\sum^{d}_{i=1}\left\| \phi*\mathcal{W}( \cdot; 0, t)\right\| _{\alpha_{i},\beta_{i}}\notag\\
	&\leq \sup_{t\in[0,b]}\sup_{\substack{\phi\in \mathcal{S}\\\sum_{i=1}^{d'}\|\phi\|_{\alpha'_i,\beta'_i}\leq 1}} \|h\|_F\sum^{d}_{i=1}\left(C_{\alpha_{i},\beta_{i}}\left\| \phi\right\|_{0,0}+C_{\alpha_{i}}\left\| \phi\right\| _{\alpha_{i},0}\right)  \notag\\
	&= \|h\|_F\sum^{d}_{i=1}\left(C_{\alpha_{i},\beta_{i}}\left\| \phi\right\|_{0,0}+C_{\alpha_{i}}\left\| \phi\right\| _{\alpha_{i},0}\right) \lesssim \|h\|_F.
	\end{align}
Thus, the proof is complete.

To prove the continuity of \eqref{iii}, let $h(t)\in\mathcal{S}(\mathbb{R}^{n})$ for every $t\in[0,\infty)$ and $K\subset \mathbb{R}_{+}$ be a compact set. Hence, for every fixed $\alpha,\beta\in\mathbb{N}^{n}_{0}$, $\gamma\in\mathbb{N}_{0}$, it follows that 
\begin{align}\label{normsigma}
	\left\|\int\limits_0^\natural\mathrm{W}_{\sharp,\natural}h\right\|_{\alpha,\beta,\gamma,K}&=\sup_{t\in K}\sup_{x\in\mathbb{R}^n}\left|x^{\alpha}\partial_{x}^{\beta}\partial_t^\gamma\int\limits_0^t\mathrm{W}_{s,t}h(s,x)ds\right|\notag\\
	&=\sup_{t\in K}\sup_{x\in\mathbb{R}^n}\left|x^{\alpha}\partial_{x}^{\beta}\left(\gamma\partial_t^{\gamma-1}h(t,x)+\int\limits_0^t\partial_{t}^\gamma\mathrm{W}_{s,t}h(s,x)ds\right)\right|\notag\\
	&\leq \sup_{t\in K}\sup_{x\in\mathbb{R}^n}\left|x^{\alpha}\partial_{x}^{\beta}\gamma\partial_t^{\gamma-1}h(t,x)\right|+\sup_{t\in K}\sup_{x\in\mathbb{R}^n}\left|x^{\alpha}\partial_x^{\beta}\int\limits_0^t\partial_{t}^\gamma\mathrm{W}_{s,t}h(s,x)ds\right|\notag\\
	&\leq \gamma\|h\|_{\alpha,\beta,\gamma-1,K}+\sup_{t\in K}\sup_{x\in\mathbb{R}^n}\int\limits_0^t\left|x^{\alpha}\partial_x^{\beta}\partial_{t}^\gamma\left(\mathcal{W}(\cdot;s,t)\ast h(s,\cdot)(x)\right)\right|ds.
\end{align}
One can continue   \eqref{normsigma} as follows:
\begin{align}\label{normsigma0}
	\sup_{t\in K}\sup_{x\in\mathbb{R}^n}\int\limits_0^t\left|x^{\alpha}\partial_x^{\beta}\partial_{t}^\gamma\left(\mathcal{W}(\cdot;s,t)\ast h(s)(x)\right) \right|ds&=\sup_{t\in K}\sup_{x\in\mathbb{R}^n}\int\limits_0^t\left|\left(x^{\alpha}\partial_x^{\beta}\partial_{t}^\gamma\mathcal{W}(\cdot;s,t)\right)\ast h(s)(x)\right|ds\notag\\
	&\leq\sup_{t\in K}\int\limits_0^t\left\|\left((\cdot)^{\alpha}\partial_x^{\beta}\partial_{t}^\gamma\mathcal{W}(\cdot;s,t)\right)\ast h(s)\right\|_{\infty}ds\notag\\
	&\leq\sup_{t\in K}\int\limits_0^t\left\|(\cdot)^{\alpha}\partial_x^{\beta}\partial_{t}^\gamma\mathcal{W}(\cdot;s,t)\right\|_{1} \left\|h(s)\right\|_{\infty}ds\notag\\
	&\leq \max_{s\in[0,[\max K]+1]}\left\|h(s)\right\|_{\infty}\sup_{t\in K}\int\limits_0^t\left\|(\cdot)^{\alpha}\partial_x^{\beta}\partial_{t}^\gamma\mathcal{W}(\cdot;s,t)\right\|_{1} ds,
\end{align} 
where in the last line $\sup\limits_{t\in K}\int\limits^{t}_{0}\left\|(\cdot)^{\alpha}\partial^{\beta}_{x}\partial^{\gamma}_{t} \mathcal{W}(\cdot;s,t)\right\|_{1}ds<\infty$ holds because  $\mathcal{W}\in\mathcal{S}(\mathbb{R}^n)\otimes C^\infty(\Delta)$, so the function $(s,t)\mapsto\left\|(\cdot)^{\alpha}\partial^{\beta}_{x}\partial^{\gamma}_{t} \mathcal{W}(\cdot;s,t)\right\|_{1}$ is continuous for $\forall (s,t)\in \Delta$. Thereby, for each fixed $t\in\mathbb{R}_{+}$, the integrand is continuous in $s$, and integrable over $[0,t]$ and the resulting integral depends continuously on $t$. Since $K\subset \mathbb{R}_{+}$ is compact, the supremum over $t\in K$  is finite.
Now, using \eqref{normsigma0} in \eqref{normsigma}, we have
\begin{align}
	\left\|\int\limits_0^\natural\mathrm{W}_{\sharp,\natural}h\right\|_{\alpha,\beta,\gamma,K}
	&\leq \gamma\|h\|_{\alpha,\beta,\gamma-1,K}+C_{\alpha,\beta,\gamma,K}\left\| h\right\|_{0,0,[\max K]+1}.
\end{align}
Thus, the proof of part $iii$ is complete.

To prove \eqref{iv}, let $ h\in C^\infty(\mathbb{R}_+,\mathcal{S}(\mathbb{R}^n))\cap\,C([0,+\infty),\mathcal{S}(\mathbb{R}^n))$ for every $t\in[0,+\infty)$ and $b\in \mathbb{N}$ as well as $F'=\{(\alpha'_{i},\beta'_{i})\}^{\ell}_{i=1}\subset \mathbb{N}^{2n}_{0}$ for some $\ell\in\mathbb{N}$. Hence, one can see that for every $\phi\in\mathcal{S}(\mathbb{R}^{n})$,
\begin{align}\label{iv''}
	\left\|	\int\limits_0^\natural\mathrm{W}_{\sharp,\natural}h  \right\|_{F',b}&=\sup_{t\in[0,b]}\left\|\left( 	\int\limits_0^\natural\mathrm{W}_{\sharp,\natural}h\right) (t)  \right\|_{F'}\notag=\sup_{t\in[0,b]}\left\|	\int\limits_0^t\mathrm{W}_{s,t}h(s)ds \right\|_{F'}\notag\\
	&\leq\sup_{t\in[0,b]}\int\limits_0^t\left\| \mathrm{W}_{s,t}h(s)\right\| _{F'}ds=\sup_{t\in[0,b]}\int\limits_0^t  \left\| \mathcal{W}( \cdot; s, t)*h(s)\right\| _{F'}ds\notag\\\notag
	&=\sup_{t\in[0,b]}\int\limits_0^t  \sup_{\substack{\phi\in \mathcal{S}\\\sum^{\ell}_{i=1}\|\phi\|_{\alpha'_{i},\beta'_{i}}\leq 1}}\left|  \mathcal{W}( \cdot; s, t)*h(s)(\phi)\right|  ds\notag\\
	&=\sup_{t\in[0,b]}\int\limits_0^t  \sup_{\substack{\phi\in \mathcal{S}\\\sum^{\ell}_{i=1}\|\phi\|_{\alpha'_{i},\beta'_{i}}\leq 1}}\left| h(s)\left(\phi*\mathcal{W}( \cdot; s, t) \right)\right| ds\notag\\
	&=\sup_{t\in[0,b]}\int\limits_0^t  \sup_{\substack{\phi\in \mathcal{S}\\\sum^{\ell}_{i=1}\|\phi\|_{\alpha'_{i},\beta'_{i}}\leq 1}}\left|\int\limits_{\mathbb{R}^n} h(s,x)\left(\phi\ast\mathcal{W}(\cdot;s,t)\right)(x)dx\right|ds\notag\\
	&\leq \sup_{t\in[0,b]}\int\limits_0^t  \sup_{\substack{\phi\in \mathcal{S}\\\sum^{\ell}_{i=1}\|\phi\|_{\alpha'_{i},\beta'_{i}}\leq 1}}\int\limits_{\mathbb{R}^n}|h(s,x)|\left|\left(\phi\ast\mathcal{W}(\cdot;s,t)\right)(x)\right|dxds\notag\\
	&\leq \sup_{t\in[0,b]}\int\limits_0^t  \sup_{\substack{\phi\in \mathcal{S}\\\sum^{\ell}_{i=1}\|\phi\|_{\alpha'_{i},\beta'_{i}}\leq 1}}\sup_{x\in\mathbb{R}^n}|h(s,x)|\left\|\left(\phi\ast\mathcal{W}(\cdot;s,t)\right)\right\|_1ds
	\notag\\\notag\\
	&\leq \sup_{t\in[0,b]}\int\limits_0^t  \sup_{\substack{\phi\in \mathcal{S}\\\sum^{\ell}_{i=1}\|\phi\|_{\alpha'_{i},\beta'_{i}}\leq 1}}\sup_{x\in\mathbb{R}^n}|h(s,x)|\left\|\phi\right\|_1\left\|\mathcal{W}(\cdot;s,t)\right\|_1ds
	\notag\\\notag\\
	&\leq \left\|\phi\right\|_1 \sup_{t\in[0,b]}\int\limits_0^t \|h(s,\cdot)\|_{\infty} ds\leq b \|\phi\|_1 \sup_{s\in[0,b]}\|h(s,\cdot)\|_{\infty}\notag\\
	&\lesssim_{b} \left\| h\right\|_{0,0,b}.
\end{align}
Thus, the proof of part iv is complete. 
\end{proof}

With the help of the solutions maps, we can state and prove the very weak analogue of Proposition \ref{pr3}.

\begin{proposition} 
Let $u_0\in\mathcal{G}^\mho(\mathcal{S}'(\mathbb{R}^n))$ and $f\in\mathcal{G}^\mho(C^\infty(\mathbb{R}_+,\mathcal{S}(\mathbb{R}^n))\cap C([0,\infty),\mathcal{S}(\mathbb{R}^{n}))$. Then
$$
u\doteq\mathrm{W}_{0,\natural}u_0+\int\limits_0^\natural\mathrm{W}_{\sharp,\natural}f\in\mathcal{G}^\mho(C^\infty(\mathbb{R}_+,\mathcal{S}(\mathbb{R}^n))\,\cap\,C([0,+\infty),\mathcal{S}'(\mathbb{R}^n))),
$$
solves the Cauchy problem
\begin{equation}\label{CauchyvwSc}
\begin{cases}
	\partial_tu-\mathcal{L}_tu=f,\\
	\left.u\right|_0=u_0.
\end{cases}
\end{equation}
Moreover, if $u_0\ge0$ and $f\ge0$ then $u\ge0$.
\end{proposition}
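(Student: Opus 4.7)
The plan is to proceed entirely at the level of representatives, leveraging Proposition \ref{pr3} for each $\epsilon$ and then transporting the result to the quotient. Pick representatives $(u_{0,\epsilon})_{\epsilon\in(0,1)}$ of $u_0$, $(f_\epsilon)_{\epsilon\in(0,1)}$ of $f$, and $(a_\epsilon)_{\epsilon\in(0,1)}$ of $a$. For each fixed $\epsilon\in(0,1)$, $a_\epsilon$ satisfies the classical smoothness/positive-definiteness hypotheses, so Proposition \ref{pr3} provides a classical solution $u_\epsilon\in C^\infty(\mathbb{R}_+,\mathcal{S}(\mathbb{R}^n))\cap C([0,\infty),\mathcal{S}'(\mathbb{R}^n))$ of the $\epsilon$-regularised Cauchy problem, given explicitly by $u_\epsilon(t)=\mathrm{W}^\epsilon_{0,t}u_{0,\epsilon}+\int_0^t\mathrm{W}^\epsilon_{s,t}f_\epsilon(s)\,ds$, where $\mathrm{W}^\epsilon_{s,t}$ denotes the heat semigroup attached to $a_\epsilon$.

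Next, I would identify $u=[u_\epsilon]_{\epsilon\in(0,1)}$ with $\mathrm{W}_{0,\natural}u_0+\int_0^\natural\mathrm{W}_{\sharp,\natural}f$ in $\mathcal{G}^\mho$. That this class indeed lies in the target space $\mathcal{G}^\mho(C^\infty(\mathbb{R}_+,\mathcal{S}(\mathbb{R}^n))\cap C([0,+\infty),\mathcal{S}'(\mathbb{R}^n)))$, i.e.\ that the family $(u_\epsilon)$ is $\mho$-moderate and that the class is independent of the chosen representatives, follows directly from Proposition \ref{pr9}: the two solution maps are continuous $\widetilde{\mathbb{F}}^\mho$-linear operators between the corresponding Colombeau extensions, and sharp-topology continuity sends moderate nets to moderate nets and negligible differences to negligible differences.

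To verify that $u$ solves the very weak Cauchy problem (\ref{CauchyvwSc}), I would translate both sides into their representative formulation via (\ref{VWDtDef})--(\ref{VWLtDef}). For each $\epsilon$, Proposition \ref{pr3} yields the pointwise identity $\partial_tu_\epsilon-\langle\partial_x,a_\epsilon\partial_x\rangle u_\epsilon=f_\epsilon$ together with $\lim_{t\to 0+}u_\epsilon(t)=u_{0,\epsilon}$ in $\mathcal{S}'(\mathbb{R}^n)$; passing to equivalence classes and applying the definition of the extended restriction map $\cdot|_0$ upgrades these to the very weak identities $\partial_tu-\mathcal{L}_tu=f$ and $u|_0=u_0$ in $\mathcal{G}^\mho$, using the fact that an $\epsilon$-wise equality between moderate nets is \emph{a fortiori} a negligible difference.

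For the non-negativity claim I would use that, by definition of $\mathcal{G}^\mho(\mathcal{S}'_+(\mathbb{R}^n))$ and of the corresponding non-negative cone in $\mathcal{G}^\mho(C^\infty(\mathbb{R}_+,\mathcal{S}(\mathbb{R}^n))\cap C([0,\infty),\mathcal{S}(\mathbb{R}^n)))$, one can select representatives with $u_{0,\epsilon}\ge 0$ and $f_\epsilon\ge 0$; the last assertion of Proposition \ref{pr3} then gives $u_\epsilon\ge 0$ for every $\epsilon\in(0,1)$, hence $u\ge 0$ in $\mathcal{G}^\mho$. The main conceptual subtlety is that the operators $\mathrm{W}^\epsilon_{s,t}$ themselves carry $\epsilon$-dependence inherited from $a_\epsilon$, so Proposition \ref{pr9} must be read as an assertion about the Colombeau-level object induced by the family $(\mathrm{W}^\epsilon_{s,t})$ rather than a single classical operator extended to $\mathcal{G}^\mho$; once this identification is made explicit, all algebraic manipulations go through $\epsilon$-wise, and the continuity estimates established in Proposition \ref{pr9} supply the uniform-in-$\epsilon$ control required to certify moderateness of the output.
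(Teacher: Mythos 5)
Your proposal is correct and follows essentially the same route as the paper's own proof: the paper likewise works with representatives $u_{0,\epsilon}$ and $f_\epsilon$, obtains the membership $u\in\mathcal{G}^\mho(C^\infty(\mathbb{R}_+,\mathcal{S}(\mathbb{R}^n))\cap C([0,+\infty),\mathcal{S}'(\mathbb{R}^n)))$ from Proposition \ref{pr9}, verifies the equation and the initial condition $\epsilon$-componentwise via Proposition \ref{pr3} together with the definitions (\ref{VWDtDef})--(\ref{VWLtDef}), and gets non-negativity from the positivity of the kernel applied to non-negative representatives. The only point where you go beyond the paper is the closing remark on the $\epsilon$-dependence of $\mathrm{W}^{\epsilon}_{s,t}$ through $a_\epsilon$: the paper simply applies the componentwise reading you describe (writing $\mathcal{L}^{\epsilon}_t$ and $\mathrm{W}^{\epsilon}_{0,\natural}u_0\doteq\mathrm{W}_{0,\natural}u_{0,\epsilon}$) and never establishes any uniform-in-$\epsilon$ operator estimates, so you should not attribute such control to Proposition \ref{pr9} (its bounds have constants depending on the kernel, hence on the coefficient), although nothing in your argument, read at the paper's level of rigor, actually relies on that stronger claim.
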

\begin{proof}
Since $u_0 \in \mathcal{G}^\mho(\mathcal{S}'(\mathbb{R}^n))$, it follows from Proposition \ref{pr9} that
$$
\mathrm{W}_{0,\natural}u_0 \in \mathcal{G}^\mho\big(C^\infty(\mathbb{R}_+, \mathcal{S}(\mathbb{R}^n)) \cap C([0,+\infty), \mathcal{S}'(\mathbb{R}^n))\big).
$$
Similarly, since $ f \in \mathcal{G}^\mho(C^\infty(\mathbb{R}_+, \mathcal{S}(\mathbb{R}^n))\cap C([0,\infty),\mathcal{S}(\mathbb{R}^{n}))$, we again obtain from Proposition \ref{pr9} that
$$
\int_0^\natural \mathrm{W}_{\sharp,\natural}f \in \mathcal{G}^\mho\big(C^\infty(\mathbb{R}_+, \mathcal{S}(\mathbb{R}^n)) \cap C([0,+\infty), \mathcal{S}'(\mathbb{R}^n))\big).
$$
Therefore,
$$
u \in \mathcal{G}^\mho\big(C^\infty(\mathbb{R}_+, \mathcal{S}(\mathbb{R}^n)) \cap C([0,+\infty), \mathcal{S}'(\mathbb{R}^n))\big).
$$
Now, we aim to show that $u$ satisfies the Cauchy problem \eqref{CauchyvwSc}. To this end, let $u_0 \in \mathcal{G}^\mho(\mathcal{S}'(\mathbb{R}^n))$. This means that
$
u_0 \in \mathcal{M}^\mho(\mathcal{S}'(\mathbb{R}^n)) / \mathcal{N}^\mho(\mathcal{S}'(\mathbb{R}^n)).$
Therefore,
\begin{align}\label{u0colombeu}
	u_0 = [u_{0,\epsilon}]_{\epsilon \in (0,1)}  
	\doteq \{u_{0,\epsilon}\}_{\epsilon\in(0,1)} \mod \mathcal{N}^\mho(\mathcal{S}'(\mathbb{R}^n)),
\end{align}
where $\{u_{0,\epsilon}\}_{\epsilon\in(0,1)} \in \mathcal{M}^\mho(\mathcal{S}'(\mathbb{R}^n))$, which by the definition of moderateness means that $u_{0,\epsilon} \in \mathcal{S}'(\mathbb{R}^n)$ for every $\epsilon\in(0,1)$.
With a similar reasoning, since $ f \in \mathcal{G}^\mho(C^\infty(\mathbb{R}_+, \mathcal{S}(\mathbb{R}^n))\cap C([0,\infty),\mathcal{S}(\mathbb{R}^{n})))$, we write
$$
f = [f_\epsilon]_{\epsilon \in (0,1)} \doteq \{f_\epsilon\}_{\epsilon\in(0,1)} \mod \mathcal{N}^\mho(C^\infty(\mathbb{R}_+, \mathcal{S}(\mathbb{R}^n))\cap C([0,\infty),\mathcal{S}(\mathbb{R}^{n}))),
$$
where $\{f_\epsilon\}_{\epsilon\in(0,1)} \in \mathcal{M}^\mho(C^\infty(\mathbb{R}_+, \mathcal{S}(\mathbb{R}^n))\cap C([0,\infty),\mathcal{S}(\mathbb{R}^{n})))$, which means that $
f_\epsilon \in C^\infty(\mathbb{R}_+, \mathcal{S}(\mathbb{R}^n)) \cap C([0,\infty),\mathcal{S}(\mathbb{R}^{n}))
$
for every $\epsilon\in(0,1)$.
Furthermore, from
$
\mathrm{W}_{0,\natural}u_0 \in \mathcal{G}^\mho\big(C^\infty(\mathbb{R}_+, \mathcal{S}(\mathbb{R}^n)) \cap C([0,+\infty), \mathcal{S}'(\mathbb{R}^n))\big),
$
we have
$$
\mathrm{W}_{0,\natural}u_0=[\mathrm{W}^{\epsilon}_{0,\natural}u_0]_{\epsilon\in(0,1)} \doteq \{\mathrm{W}^\epsilon_{0,\natural}u_0\}_{\epsilon\in(0,1)}\mod \mathcal{N}^\mho\big(C^\infty(\mathbb{R}_+, \mathcal{S}(\mathbb{R}^n)) \cap C([0,+\infty),\mathcal{S}'(\mathbb{R}^n))\big),
$$
where $\mathrm{W}^\epsilon_{0,\natural}u_0 \doteq \mathrm{W}_{0,\natural}u_{0,\epsilon} \in C^\infty(\mathbb{R}_+, \mathcal{S}(\mathbb{R}^n)) \cap C([0,+\infty),\mathcal{S}'(\mathbb{R}^n))$ for every $\epsilon\in(0,1)$.
In addition, using the fact that
$
\int_0^\natural \mathrm{W}_{\sharp,\natural}f \in \mathcal{G}^\mho\big(C^\infty(\mathbb{R}_+, \mathcal{S}(\mathbb{R}^n)) \cap C([0,+\infty), \mathcal{S}'(\mathbb{R}^n))\big),
$
it follows that
$$
\int_0^\natural \mathrm{W}_{\sharp,\natural}f=\left[\int_0^{\natural,\epsilon} \mathrm{W}_{\sharp,\natural}f\right]_{\epsilon\in(0,1)} \doteq \left\{\int_0^{\natural,\epsilon} \mathrm{W}_{\sharp,\natural}f\right\}_{\epsilon\in(0,1)} \mod \mathcal{N}^\mho\big(C^\infty(\mathbb{R}_+, \mathcal{S}(\mathbb{R}^n)) \cap C([0,+\infty), \mathcal{S}'(\mathbb{R}^n))\big),
$$
where
$$
\int_0^{\natural,\epsilon} \mathrm{W}_{\sharp,\natural}f \doteq \int_0^\natural \mathrm{W}_{\sharp,\natural}f_\epsilon \in C^\infty(\mathbb{R}_+, \mathcal{S}(\mathbb{R}^n)) \cap C([0,+\infty), \mathcal{S}'(\mathbb{R}^n)), ~ \forall \epsilon\in(0,1).
$$
For $u=\mathrm{W}_{0,\sharp}u_0 + \int_0^\natural \mathrm{W}_{\sharp,\natural}f$, it follows that
$$
u_\epsilon = \mathrm{W}_{0,\natural}u_{0,\epsilon} + \int_0^\natural \mathrm{W}_{\sharp,\natural}f_\epsilon \mod \mathcal{N}^\mho\big(C^\infty(\mathbb{R}_+, \mathcal{S}(\mathbb{R}^n)) \cap C([0,+\infty), \mathcal{S}'(\mathbb{R}^n))\big), ~ \forall \epsilon\in(0,1),
$$
where
$$
\mathrm{W}_{0,\natural}u_{0,\epsilon} + \int_0^\natural \mathrm{W}_{\sharp,\natural}f_\epsilon\in C^\infty(\mathbb{R}_+, \mathcal{S}(\mathbb{R}^n)) \cap C([0,+\infty), \mathcal{S}'(\mathbb{R}^n)),~ \forall \epsilon\in(0,1).
$$
It now follows from the definitions \eqref{VWDtDef} and \eqref{VWLtDef}, together with Proposition \ref{pr8}, that
$$
\partial_t u =[\partial_t u_\epsilon]_{\epsilon\in(0,1)}\doteq \{\partial_t u_\epsilon\}_{\epsilon\in(0,1)}  \mod \mathcal{N}^\mho(C^\infty(\mathbb{R}_+, \mathcal{S}(\mathbb{R}^n))\cap C([0,\infty),\mathcal{S}(\mathbb{R}^{n}))),
$$
and
\begin{align*}
	\mathcal{L}_t u &=[\mathcal{L}^\epsilon_t u_\epsilon]_{\epsilon\in(0,1)}\doteq \{\mathcal{L}^\epsilon_t u_\epsilon\}_{\epsilon\in(0,1)}  \mod \mathcal{N}^\mho(C^\infty(\mathbb{R}_+, \mathcal{S}(\mathbb{R}^n))\cap C([0,\infty),\mathcal{S}(\mathbb{R}^{n}))) \\
	&= \{\langle \partial_x, a_\epsilon \partial_x \rangle u_\epsilon \}_{\epsilon\in(0,1)} \mod \mathcal{N}^\mho(C^\infty(\mathbb{R}_+, \mathcal{S}(\mathbb{R}^n))\cap C([0,\infty), \mathcal{S}(\mathbb{R}^{n}))),
\end{align*}
where $\partial_t u_\epsilon \in C^\infty(\mathbb{R}_+, \mathcal{S}(\mathbb{R}^n))\cap C([0,\infty), \mathcal{S}(\mathbb{R}^{n}))$ and $\mathcal{L}^\epsilon_t u_\epsilon \in C^\infty(\mathbb{R}_+, \mathcal{S}(\mathbb{R}^n))\cap C([0,\infty), \mathcal{S}(\mathbb{R}^{n}))$ for every $\epsilon\in(0,1)$.
Thus, using Proposition \ref{pr3}, we obtain
\begin{align*}
	\partial_t u - \mathcal{L}_t u &= \{\partial_t u_\epsilon - \mathcal{L}^\epsilon_t u_\epsilon\}_{\epsilon\in(0,1)}\mod \mathcal{N}^\mho(C^\infty(\mathbb{R}_+, \mathcal{S}(\mathbb{R}^n))\cap C([0,\infty),\mathcal{S}(\mathbb{R}^{n})))\\ 
	&= \{f_\epsilon\}_{\epsilon\in(0,1)} \mod \mathcal{N}^\mho(C^\infty(\mathbb{R}_+, \mathcal{S}(\mathbb{R}^n))\cap C([0,\infty),\mathcal{S}(\mathbb{R}^{n}))).
\end{align*}
Hence, we have constructed a net of solutions $\{u_\epsilon\}_{\epsilon \in (0,1)}$ to the Cauchy problem \eqref{CauchyvwSc}.

To establish the non-negativity of the solution \( u \in \mathcal{G}^\mho\big(C^\infty(\mathbb{R}_+, \mathcal{S}(\mathbb{R}^n)) \cap C([0,+\infty), \mathcal{S}'(\mathbb{R}^n))\big) \), we follow the same reasoning as in Proposition \ref{pr3}.
Here, the solution is given by
$$
u = \left[ \mathrm{W}_{0,\natural}u_{0,\epsilon} + \int_0^\natural \mathrm{W}_{\sharp,\natural}f_\epsilon \right]_{\epsilon \in (0,1)}.
$$
Assuming  $ u_{0,\epsilon} \geq 0 $, $f_\epsilon \geq 0 $ for every $\epsilon \in (0,1)$, and using the positivity of our heat kernel, it follows that each $ u_\epsilon(t) \geq 0 $. Hence, the net $ \{u_\epsilon\}_{\epsilon\in(0,1)} $ is non-negative.
Therefore, the very weak solution $ u$ is non-negative. 
\end{proof}

\subsection{The very weak $L^q$-Cauchy problem}

In this section we will set up the very weak formulation of the $L^q$-well-posedness of the Cauchy problem. This will involve restricting initial data to $u_0\in L^q(\mathbb{R}^n)$ and relaxing the source term to $f\in C^\infty(\mathbb{R}_+,L^q(\mathbb{R}^n))\cap C([0,\infty),L^{q}(\mathbb{R}^{n}))$. The initial conditions at $t=0$ should be understood as a limit as $t\to0+$ in the topology of $L^q(\mathbb{R}^n)$. As per Proposition \ref{pr5}, once the solution $u\in C^\infty(\mathbb{R}_+,W^{q,\infty}(\mathbb{R}^n))$ satisfies the initial condition
$$
\lim\limits_{t\to0+}u(t)=u_0\in L^q(\mathbb{R}^n),\quad q\in[1,+\infty],
$$
we can interpret this as
$$
u\in C^\infty(\mathbb{R}_+,W^{q,\infty}(\mathbb{R}^n))\,\cap\,C([0,+\infty),L^q(\mathbb{R}^n)),
$$
and
$$
\left.u\right|_0=u_0.
$$
The space $C^\infty(\mathbb{R}_+,W^{q,\infty}(\mathbb{R}^n))\,\cap\,C([0,+\infty),L^q(\mathbb{R}^n))$ will be taken with the union of the families of seminorms of $C^\infty(\mathbb{R}_+,W^{q,\infty}(\mathbb{R}^n))$ and $C([0,+\infty),L^q(\mathbb{R}^n))$, respectively. With this system of seminorms, we will consider the Colombeau extension
$$
\mathcal{G}^\mho(C^\infty(\mathbb{R}_+,W^{q,\infty}(\mathbb{R}^n))\,\cap\,C([0,+\infty),L^q(\mathbb{R}^n))).
$$
The operation of restriction to (or evaluation at) $t=0$ defines a linear continuous surjection
$$
\left.\phantom{A}\right|_0:C^\infty(\mathbb{R}_+,W^{q,\infty}(\mathbb{R}^n))\,\cap\,C([0,+\infty),L^q(\mathbb{R}^n))\to L^q(\mathbb{R}^n),
$$
which, by Corollary \ref{VWContCorr}, can be extended to
$$
\left.\phantom{A}\right|_0:\mathcal{G}^\mho(C^\infty(\mathbb{R}_+,W^{q,\infty}(\mathbb{R}^n))\,\cap\,C([0,+\infty),L^q(\mathbb{R}^n)))\to\mathcal{G}^\mho(L^q(\mathbb{R}^n)).
$$
This allows us to set up initial conditions as
$$
\left.u\right|_0=u_0,\quad u\in\mathcal{G}^\mho(C^\infty(\mathbb{R}_+,W^{q,\infty}(\mathbb{R}^n))\,\cap\,C([0,+\infty),L^q(\mathbb{R}^n))),\quad u_0\in\mathcal{G}^\mho(L^q(\mathbb{R}^n)).
$$
By Proposition \ref{pr4}, the heat semigroup operators $\{\mathrm{W}_{s,t}\}_{(s,t)\in\bar\Delta}$ satisfy
$$
\mathrm{W}_{s,t}\in\mathcal{B}(L^q(\mathbb{R}^n),W^{q,\infty}(\mathbb{R}^n)),\quad\forall(s,t)\in\bar\Delta,\quad q\in[1,+\infty],
$$
and by Corollary \ref{VWContCorr} they can be promoted to
$$
\mathrm{W}_{s,t}\in\mathcal{B}(\mathcal{G}^\mho(L^q(\mathbb{R}^n)),\mathcal{G}^\mho(W^{q,\infty}(\mathbb{R}^n))),\quad\forall(s,t)\in\bar\Delta,\quad q\in[1,+\infty].
$$
As before, the homogeneous and inhomogeneous solution maps can be now introduced.

\begin{proposition} For any $q\in[1,+\infty]$, the solution maps, homogeneous
\begin{equation}\label{homogeneoussolution'1}
	\mathrm{W}_{0,\natural}:L^q(\mathbb{R}^n)\longrightarrow C^\infty(\mathbb{R}_+,W^{q,\infty}(\mathbb{R}^n))\,\cap\,C([0,+\infty),L^q(\mathbb{R}^n)),
\end{equation}
and inhomogeneous
\begin{equation}\label{inhomogeneoussolution'2}
	\int\limits_0^\natural\mathrm{W}_{\sharp,\natural}:C^\infty(\mathbb{R}_+,W^{q,\infty}(\mathbb{R}^n))\,\cap\,C([0,+\infty),L^q(\mathbb{R}^n))\longrightarrow C^\infty(\mathbb{R}_+,W^{q,\infty}(\mathbb{R}^n))\,\cap\,C([0,+\infty),L^q(\mathbb{R}^n)),
\end{equation}
defined by
$$
\mathrm{W}_{0,\natural}h(t)=\mathrm{W}_{0,t}h,\quad\forall t\in[0,+\infty),\quad\forall h\in L^q(\mathbb{R}^n),
$$
and
$$
\int\limits_0^\natural\mathrm{W}_{\sharp,\natural}h(t)=\int\limits_0^t\mathrm{W}_{s,t}h(s)ds,\quad\forall t\in[0,+\infty),\quad\forall h\in C^\infty(\mathbb{R}_+,W^{q,\infty}(\mathbb{R}^n))\cap\,C([0,+\infty),L^q(\mathbb{R}^n)),
$$
respectively, extend to
\begin{equation}\label{homogeneoussolutionoperator'1}
	\mathrm{W}_{0,\natural}\in\mathcal{B}(\mathcal{G}^\mho(L^q(\mathbb{R}^n)),\mathcal{G}^\mho(C^\infty(\mathbb{R}_+,W^{q,\infty}(\mathbb{R}^n))\,\cap\,C([0,+\infty),L^q(\mathbb{R}^n)))),
\end{equation}
and
\begin{align}\label{inhomogeneoussolutionoperator'2}
	\int\limits_0^\natural\mathrm{W}_{\sharp,\natural}\in\mathcal{B}(\mathcal{G}^\mho(C^\infty(\mathbb{R}_+,W^{q,\infty}(\mathbb{R}^n))\cap\,C([0,+\infty),L^q(\mathbb{R}^n))),\mathcal{G}^\mho(C^\infty(\mathbb{R}_+,W^{q,\infty}(\mathbb{R}^n))\,\cap\,C([0,+\infty),L^q(\mathbb{R}^n)))).
\end{align}
\end{proposition}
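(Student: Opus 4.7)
The plan is to mirror the strategy of Proposition \ref{pr9} and reduce the assertions \eqref{homogeneoussolutionoperator'1} and \eqref{inhomogeneoussolutionoperator'2} to Corollary \ref{VWContCorr}. This reduces the entire proof to verifying that the underlying classical maps \eqref{homogeneoussolution'1} and \eqref{inhomogeneoussolution'2} are linear and continuous between the indicated locally convex spaces; linearity is immediate from the bilinearity of convolution and the integral, so only continuity must be analyzed. I would organize it as four separate continuity checks, matching the two factors of each target space: (i) $\mathrm{W}_{0,\natural}:L^q(\mathbb{R}^n)\to C^\infty(\mathbb{R}_+,W^{q,\infty}(\mathbb{R}^n))$, (ii) $\mathrm{W}_{0,\natural}:L^q(\mathbb{R}^n)\to C([0,+\infty),L^q(\mathbb{R}^n))$, (iii) $\int_0^\natural\mathrm{W}_{\sharp,\natural}:C^\infty(\mathbb{R}_+,W^{q,\infty}(\mathbb{R}^n))\cap C([0,+\infty),L^q(\mathbb{R}^n))\to C^\infty(\mathbb{R}_+,W^{q,\infty}(\mathbb{R}^n))$, and (iv) $\int_0^\natural\mathrm{W}_{\sharp,\natural}:C^\infty(\mathbb{R}_+,W^{q,\infty}(\mathbb{R}^n))\cap C([0,+\infty),L^q(\mathbb{R}^n))\to C([0,+\infty),L^q(\mathbb{R}^n))$, each tested against the seminorms $\|\cdot\|_{q,j,\ell,K}$ and $\|\cdot\|_{q,b}$ of Remark \ref{notation2}.

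For (i) and (ii) the key tool is Young's convolution inequality, together with the fact that derivatives commute with the convolution against $\mathcal{W}(\cdot;0,t)$ (Remarks \ref{partialtimes} and \ref{partialtimesx}, applicable here since $L^q(\mathbb{R}^n)\subset\mathcal{S}'(\mathbb{R}^n)$). Case (i) reduces to $\sup_{t\in K}\|\partial_t^j\partial_x^\ell\mathcal{W}(\cdot;0,t)\|_1\cdot\|h\|_q\lesssim_{j,\ell,K}\|h\|_q$, while case (ii) follows from the identity $\|\mathcal{W}(\cdot;0,t)\|_1=1$ of Remark \ref{normpkernel}, which gives $\|\mathrm{W}_{0,t}h\|_q\le\|h\|_q$; continuity at $t=0$ is already provided by the argument in Proposition \ref{pr5} via density of $\mathcal{S}(\mathbb{R}^n)$ in $L^q(\mathbb{R}^n)$.

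For (iii) I would first apply the Leibniz-type identity of Remark \ref{Leibniz} to move $t$-derivatives inside the integral at the cost of a boundary term $\gamma\partial_t^{\gamma-1}h(t)$, then move $x$-derivatives inside the convolution as in Remark \ref{changeorderforx}, and finally estimate by Young's inequality to obtain something of the shape
$$
\left\|\int_0^\natural\mathrm{W}_{\sharp,\natural}h\right\|_{q,\gamma,\ell,K}\lesssim_{\gamma,\ell,K}\|h\|_{q,\gamma-1,\ell,K}+\sup_{s\in[0,\max K]}\|h(s)\|_q.
$$
Case (iv) is the simplest: Young's inequality combined with $\|\mathcal{W}(\cdot;s,t)\|_1=1$ immediately yields $\left\|\int_0^t\mathrm{W}_{s,t}h(s)ds\right\|_q\le\int_0^t\|h(s)\|_q\,ds\le b\,\|h\|_{q,b}$.

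The main obstacle I anticipate lies in step (iii). Although the computation looks formally parallel to case (iii) of Proposition \ref{pr9}, here the output topology is the coarser one of $W^{q,\infty}(\mathbb{R}^n)$ rather than $\mathcal{S}(\mathbb{R}^n)$, so the justification for differentiating under the integral sign must be redone in the $L^q$-valued setting rather than pointwise in $x$. One needs that $(s,t)\mapsto\|\partial_x^\ell\partial_t^\gamma\mathcal{W}(\cdot;s,t)\|_1$ is continuous on $\Delta$ and locally integrable in $s$ up to the diagonal $s=t$, which follows from $\mathcal{W}\in\mathcal{S}(\mathbb{R}^n)\otimes C^\infty(\Delta)$ and the explicit Gaussian form of the kernel, but requires careful tracking near the diagonal on compact $K\Subset\mathbb{R}_+$. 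Once (i)--(iv) are in place, Corollary \ref{VWContCorr} produces the two operator extensions \eqref{homogeneoussolutionoperator'1}--\eqref{inhomogeneoussolutionoperator'2} without further work.
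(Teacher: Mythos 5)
Your proposal follows essentially the same route as the paper's own proof: the same reduction to Corollary \ref{VWContCorr} via the four continuity checks (i)--(iv) against the seminorms of Remark \ref{notation2}, with Young's inequality, the identity $\|\mathcal{W}(\cdot;0,t)\|_1=1$, the Leibniz-type identity of Remark \ref{Leibniz}, and the continuity of $(s,t)\mapsto\|\partial_x^\ell\partial_t^j\mathcal{W}(\cdot;s,t)\|_1$ on compact subsets, yielding exactly the estimates the paper derives (including the bound $j\|h\|_{q,j-1,\ell,K}+C_{j,\ell,K}\|h\|_{q,[\max K]+1}$ in case (iii) and $b\|h\|_{q,b}$ in case (iv)). The diagonal issue you flag in (iii) is handled in the paper by the same continuity-on-$\Delta$ argument you propose, so no further work is needed.
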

\begin{proof}
To have the extensions \eqref{homogeneoussolutionoperator'1} and \eqref{inhomogeneoussolutionoperator'2}, similar to the argument in Proposition \ref{pr9}, referring to  Corollary \ref{VWContCorr},  we need to establish the continuity and linearity of the following maps:
\begin{itemize}
\item [i.]
\begin{align}\label{i1}
	\mathrm{W}_{0,\natural}&:L^{q}(\mathbb{R}^n)\longrightarrow C^\infty(\mathbb{R}_+,W^{q,\infty}(\mathbb{R}^n)),\notag\\
	&\mathrm{W}_{0,\natural}h(t)=\mathrm{W}_{0,t}h,\quad\forall t\in[0,+\infty),\quad\forall h\in L^{q}(\mathbb{R}^n),
\end{align}
and 
\item[ii.]
\begin{align}\label{ii1}			\mathrm{W}_{0,\natural}&: 	L^{q}(\mathbb{R}^n)\longrightarrow	C([0,+\infty),L^{q}(\mathbb{R}^n)),\notag\\
	&\mathrm{W}_{0,\natural}h(t)=\mathrm{W}_{0,t}h,\quad\forall t\in[0,+\infty),\quad\forall h\in L^{q}(\mathbb{R}^n),
\end{align}
and
\item [iii.]
\begin{align}\label{iii1}	&\int\limits_0^\natural\mathrm{W}_{\sharp,\natural}:C^\infty(\mathbb{R}_+,W^{q,\infty}(\mathbb{R}^n))\cap C([0,+\infty),L^q(\mathbb{R}^n))\longrightarrow C^\infty(\mathbb{R}_+,W^{q,\infty}(\mathbb{R}^n)),\notag\\	&\int\limits_0^\natural\mathrm{W}_{\sharp,\natural}h(t)=\int\limits_0^t\mathrm{W}_{s,t}h(s)ds,\quad\forall t\in[0,+\infty),\quad\forall h\in C^\infty(\mathbb{R}_+,W^{q,\infty}(\mathbb{R}^n))\cap C([0,+\infty),L^q(\mathbb{R}^n)),
\end{align}
as well as 
\item[iv.]
\begin{align}\label{iiii1}	&\int\limits_0^\natural\mathrm{W}_{\sharp,\natural}:C^{\infty}(\mathbb{R}_{+}, W^{q,\infty}(\mathbb{R}^{n}))\cap C([0,+\infty),L^q(\mathbb{R}^n))\longrightarrow C([0,+\infty),L^{q}(\mathbb{R}^n)),\notag\\	&\int\limits_0^\natural\mathrm{W}_{\sharp,\natural}h(t)=\int\limits_0^t\mathrm{W}_{s,t}h(s)ds,\quad\forall t\in[0,+\infty),\quad\forall h\in C^{\infty}(\mathbb{R}_{+}, W^{q,\infty}(\mathbb{R}^{n}))\cap C([0,+\infty),L^q(\mathbb{R}^n)).
\end{align}
\end{itemize} 
Linearity of the operators \eqref{i1}, \eqref{ii1}, \eqref{iii1}, and \eqref{iiii1} is immediate. To prove the continuity of the mapping \eqref{i1},
let $K\subset\mathbb{R}_+$ be a compact set and $q\in[1,\infty]$, $j\in\mathbb{N}_0$ as well as $\ell\in\mathbb{N}_0^n$. Then, since $h\in L^q(\mathbb{R}^n)$,
considering Remark \ref{notation2} and Young's inequality, we have
\begin{align}
	\|\mathrm{W}_{0,\natural}h\|_{q,j,\ell,K} &= \sup_{t\in K}\|\partial^{j}_{t}\partial_x^{\ell}(\mathrm{W}_{0,\natural}h(t))\|_{q}=\sup_{t\in K}\|\partial^{j}_{t}\partial^{\ell}_{x}\mathrm{W}_{0,t}h\|_{q}
	=\sup_{t\in K}\|\partial^{j}_{t}\partial^{\ell}_{x}(	\mathcal{W}(\cdot;0,t)*h)\|_{q}\notag\\&=	\sup_{t\in K}\|(\partial^{j}_{t}\partial^{\ell}_{x}\mathcal{W}(\cdot;0,t))*h)\|_{q}\leq \sup_{t\in K}\|\partial^{j}_{t}\partial^{\ell}_{x}\mathcal{W}(\cdot;0,t)\|_{1}\|h\|_{q}\lesssim_{j,\ell,K}\|h\|_q.
\end{align}
To prove \eqref{ii1}, let $b\in\mathbb{N}$ and $h\in L^q(\mathbb{R}^n)$. Then, by Remark \ref{notation2}, one has
\begin{align}\label{tnotzero}
	\|\mathrm{W}_{0,\natural}h\|_{q,b}&=\sup_{t\in[0,b]}\left\| \left( \mathrm{W}_{0,\natural}h\right) (t)\right\|_{q}=	\sup_{t\in[0,b]}\left\| \mathrm{W}_{0,t}h\right\| _{q}=\sup_{t\in[0,b]}\left\| \mathcal{W}(\cdot;0,t)*h\right\| _{q}\\
	&\leq 	\sup_{t\in[0,b]}\left\|\mathcal{W}(\cdot;0,t) \right\| _{1}\left\| h\right\| _{q}=\left\| h\right\| _{q}\notag.
\end{align}
To prove \eqref{iii1},  let $K\subset\mathbb{R}_+$ be a compact set and $q\in[1,\infty]$, $j\in\mathbb{N}_0$ as well as $\ell\in\mathbb{N}_0^n$. Then, since $h\in C^\infty(\mathbb{R}_+,W^{q,\infty}(\mathbb{R}^n))\cap C([0,+\infty),L^q(\mathbb{R}^n))$,
considering Remark \ref{notation2} and Young's inequality, we have
\begin{align}\label{wqinfty}
	&\left\|\int\limits_0^\natural\mathrm{W}_{\sharp,\natural}h\right\|_{q,j,\ell,k}=	\sup_{t\in K}\left\|\partial^{j}_{t}\partial_{x}^{\ell}	\int\limits_0^t\mathrm{W}_{s,t}h(s)ds\right\|_q\notag\\
	&=\sup_{t\in K}\left\|\partial^{\ell}_{x}\partial^{j}_{t}	\int\limits_0^t\mathcal{W}(\cdot;s,t)*h(s)ds\right\|_{q}\notag\\
	&\leq j\sup_{t\in K}\left\|\partial^{\ell}_{x}\partial^{j-1}_{t}h(t)\right\|_{q}+\sup_{t\in K}	\left\| \int\limits_0^t \partial^{\ell}_{x}\partial^{j}_{t}\left( \mathcal{W}(\cdot;s,t)*h(s)\right) ds\right\|_{q}\notag\\
	&= j\sup_{t\in K}\left\|\partial^{\ell}_{x}\partial^{j-1}_{t}h(t)\right\|_{q}+\sup_{t\in K}\left\|\int\limits_0^t \left(\partial^{\ell}_{x}\partial^{j}_{t} \mathcal{W}(\cdot;s,t)\right)*h(s) ds\right\|_{q}\notag\\
	&\leq j\sup_{t\in K}\left\|\partial^{\ell}_{x}\partial^{j-1}_{t}h(t)\right\|_{q}+\sup_{t\in K}	\int\limits_0^t \left\|\left(\partial^{\ell}_{x}\partial^{j}_{t} \mathcal{W}(\cdot;s,t)\right)*h(s) \right\|_{q}ds\notag\\
	&\leq j\sup_{t\in K}\left\|\partial^{\ell}_{x}\partial^{j-1}_{t}h(t)\right\|_{q}+\sup_{t\in K}	\int\limits_0^t \left\|\partial^{\ell}_{x}\partial^{j}_{t} \mathcal{W}(\cdot;s,t)\right\|_{1}\left\|h(s) \right\|_{q}ds\notag\\
	&\leq j\sup_{t \in K} \left\|\partial^{\ell}_{x}\partial^{j-1}_{t}h(t)\right\|_{q}+ \sup_{s \in [0,[\max{K}]+1]}\left\|h(s) \right\|_{q}\sup_{t\in K}\int\limits^{t}_{0}\left\|\partial^{\ell}_{x}\partial^{j}_{t} \mathcal{W}(\cdot;s,t)\right\|_{1}ds,\notag\\
	&=j\left\| h\right\| _{q,j-1,\ell,K}+C_{j,\ell,K}\left\| h\right\|_{q,[\max{K}]+1}
\end{align}
where in the last line $C_{j,\ell,K}\doteq\sup\limits_{t\in K}\int\limits^{t}_{0}\left\|\partial^{\ell}_{x}\partial^{j}_{t} \mathcal{W}(\cdot;s,t)\right\|_{1}ds<\infty$ holds because  $\mathcal{W}\in\mathcal{S}(\mathbb{R}^n)\otimes C^\infty(\Delta)$, so the function $(s,t)\mapsto\left\|\partial^{\ell}_{x}\partial^{j}_{t} \mathcal{W}(\cdot;s,t)\right\|_{1}$ is continuous for $\forall (s,t)\in \Delta$. Therefore, for each fixed $t\in\mathbb{R}_{+}$, the integrand is continuous in $s$, and integrable over $[0,t]$ and the resulting integral depends continuously on $t$. Since $K\subset \mathbb{R}_{+}$ is compact, the supremum over $t\in K$  is finite.

Now, to prove \eqref{iiii1}, let $b\in\mathbb{N}$ and  $h\in C^\infty(\mathbb{R}_+,W^{q,\infty}(\mathbb{R}^n))\cap C([0,\infty),L^q(\mathbb{R}^n))$. Then, by Remark \ref{notation2}, one can see 
\begin{align}\label{finallq}
	\left\|\int\limits_0^\natural\mathrm{W}_{\sharp,\natural}h\right\|_{q,b}&= \sup_{t\in[0,b]}\left\| \left( 	\int\limits_0^\natural\mathrm{W}_{\sharp,\natural}h\right) (t)\right\| _{q}
	=\sup_{t\in[0,b]}\left\| \int\limits_0^t\mathrm{W}_{s,t}h(s)ds\right\| _{q}\notag\\
	&\leq \sup_{t\in[0,b]}\int\limits_0^t\left\| \mathrm{W}_{s,t}h(s)\right\| _{q}ds=\sup_{t\in[0,b]}\int\limits_0^t\left\| \mathcal{W}{(\cdot;s,t)}*h(s)\right\| _{q}ds\notag\\
	&\leq \sup_{t\in[0,b]}\int\limits_0^t\left\| \mathcal{W}{(\cdot;s,t)}\right\| _{1}\left\| h(s)\right\|_{q}ds= \sup_{t\in[0,b]}\int\limits_0^t\left\| h(s)\right\|_{q}ds\notag\\
	&\leq b\sup_{s\in[0,b]}\left\| h(s)\right\| _{q}=b\left\| h\right\|_{q,b}.
\end{align}
\end{proof}

With these solutions maps, we state and prove the very weak analogue of Proposition \ref{pr5}.

\begin{proposition}\label{pr12} Let $u_0\in\mathcal{G}^\mho(L^q(\mathbb{R}^n))$ and $f\in\mathcal{G}^\mho(C^\infty(\mathbb{R}_+,\mathrm{W}^{q,\infty}(\mathbb{R}^n))\cap C([0,\infty),L^{q}(\mathbb{R}^{n})))$, for every $q\in[1,+\infty]$. Then
$$
u\doteq\mathrm{W}_{0,\natural}u_0+\int\limits_0^\natural\mathrm{W}_{\sharp,\natural}f\in\mathcal{G}^\mho(C^\infty(\mathbb{R}_+,W^{q,\infty}(\mathbb{R}^n))\,\cap\,C([0,+\infty),L^q(\mathbb{R}^n))),
$$
solves the Cauchy problem
$$
\begin{cases}
\partial_tu-\mathcal{L}_tu=f,\\
\left.u\right|_0=u_0.
\end{cases}
$$
If $u_0\ge0$ and $f\ge0$ then $u\ge0$.
\end{proposition}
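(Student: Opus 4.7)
The plan is to mimic the argument of the previous very weak proposition (for Schwartz data), but with the $L^q$--solution maps and the classical $L^q$--well-posedness result (Proposition \ref{pr5}) replacing their Schwartz counterparts. First I would verify that the candidate $u$ lies in the correct Colombeau space: since $u_0\in\mathcal{G}^\mho(L^q(\mathbb{R}^n))$ and $f\in\mathcal{G}^\mho(C^\infty(\mathbb{R}_+,W^{q,\infty}(\mathbb{R}^n))\cap C([0,\infty),L^q(\mathbb{R}^n)))$, the two extensions \eqref{homogeneoussolutionoperator'1} and \eqref{inhomogeneoussolutionoperator'2} established in the previous proposition immediately give
$$
\mathrm{W}_{0,\natural}u_0,\ \int\limits_0^\natural\mathrm{W}_{\sharp,\natural}f\ \in\ \mathcal{G}^\mho\bigl(C^\infty(\mathbb{R}_+,W^{q,\infty}(\mathbb{R}^n))\cap C([0,+\infty),L^q(\mathbb{R}^n))\bigr),
$$
so $u$ belongs to this space as well.

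Next I would unfold representatives. Pick $u_0=[u_{0,\epsilon}]_{\epsilon\in(0,1)}$ with $u_{0,\epsilon}\in L^q(\mathbb{R}^n)$, $f=[f_\epsilon]_{\epsilon\in(0,1)}$ with $f_\epsilon\in C^\infty(\mathbb{R}_+,W^{q,\infty}(\mathbb{R}^n))\cap C([0,\infty),L^q(\mathbb{R}^n))$, and $a=[a_\epsilon]_{\epsilon\in(0,1)}$ with each $a_\epsilon\in C^\infty(\mathbb{R}_+,\mathrm{GL}(n))\cap C([0,\infty),\mathrm{GL}(n))$; accordingly set
$$
u_\epsilon\doteq\mathrm{W}_{0,\natural}u_{0,\epsilon}+\int\limits_0^\natural\mathrm{W}_{\sharp,\natural}f_\epsilon,\quad\forall\epsilon\in(0,1),
$$
where $\mathrm{W}_{s,t}$ here is built from $a_\epsilon$. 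By Proposition \ref{pr5}, each $u_\epsilon$ lies in $C^\infty(\mathbb{R}_+,W^{q,\infty}(\mathbb{R}^n))\cap C([0,+\infty),L^q(\mathbb{R}^n))$, solves $\partial_t u_\epsilon-\mathcal{L}_t^\epsilon u_\epsilon=f_\epsilon$ in the classical sense, and satisfies $\lim_{t\to0+}u_\epsilon(t)=u_{0,\epsilon}$ in $L^q(\mathbb{R}^n)$.

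Passing to equivalence classes modulo $\mathcal{N}^\mho$, the definitions \eqref{VWDtDef} and \eqref{VWLtDef} (transported to the $L^q$--setting via the obvious analogue of Proposition \ref{pr8}), together with the continuity of the restriction map $\left.\phantom{A}\right|_0$ on the Colombeau extension, yield
$$
\partial_t u-\mathcal{L}_tu=[\partial_t u_\epsilon-\mathcal{L}_t^\epsilon u_\epsilon]_{\epsilon\in(0,1)}=[f_\epsilon]_{\epsilon\in(0,1)}=f,\qquad\left.u\right|_0=[u_\epsilon(0)]_{\epsilon\in(0,1)}=[u_{0,\epsilon}]_{\epsilon\in(0,1)}=u_0.
$$
Finally, non-negativity follows representative-wise: if $u_0\ge0$ and $f\ge0$ in the sense of the cones $\mathcal{G}^\mho(L^q(\mathbb{R}^n)_+)$ and $\mathcal{G}^\mho(\cdots_+)$, one may choose representatives with $u_{0,\epsilon}\ge0$ and $f_\epsilon\ge0$ for every $\epsilon\in(0,1)$, and then the last clause of Proposition \ref{pr5} gives $u_\epsilon\ge0$ for every $\epsilon$, whence $u\ge0$.

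The main obstacle I anticipate is purely bookkeeping rather than analysis: one must check that $\partial_t$ and $\mathcal{L}_t$ are genuinely continuous $\widetilde{\mathbb{F}}^\mho$-linear operators on the Colombeau extension $\mathcal{G}^\mho(C^\infty(\mathbb{R}_+,W^{q,\infty}(\mathbb{R}^n))\cap C([0,+\infty),L^q(\mathbb{R}^n)))$ (the $L^q$--variant of Proposition \ref{pr8}), and that the product $a_\epsilon\partial_x^2 u_\epsilon$ in $\mathcal{L}_t^\epsilon u_\epsilon$ is moderate with respect to the chosen seminorm system on $C^\infty(\mathbb{R}_+,W^{q,\infty}(\mathbb{R}^n))$. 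Once this is in place, the rest of the argument is a verbatim transposition of the Schwartz--case proof just completed, with Proposition \ref{pr5} supplying the per-$\epsilon$ solvability.
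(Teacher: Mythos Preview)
Your proposal is correct and follows essentially the same approach as the paper: unfold representatives, invoke Proposition~\ref{pr5} for each $\epsilon$ to obtain $\partial_t u_\epsilon-\mathcal{L}_t^\epsilon u_\epsilon=f_\epsilon$ and $u_\epsilon(0)=u_{0,\epsilon}$, then pass to equivalence classes using the continuity of $\partial_t$, $\mathcal{L}_t$ and $\left.\phantom{A}\right|_0$ on the Colombeau extension, with non-negativity handled representative-wise via the heat kernel positivity. The paper's proof does exactly this, and the ``bookkeeping obstacle'' you flag (that $\partial_t$ and $\mathcal{L}_t$ extend to $\mathcal{B}(\mathcal{G}^\mho(C^\infty(\mathbb{R}_+,W^{q,\infty}(\mathbb{R}^n))))$ via Corollary~\ref{VWContCorr}) is precisely the one point the paper spells out explicitly inside the proof.
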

\begin{proof}
	As we know the assumption $u_{0}=[u_{0,\epsilon}]_{\epsilon\in(0,1)}\in \mathcal{G}^\mho(L^q(\mathbb{R}^n))$ implies the following:
\begin{equation}\label{u0net}
	u_{0}=[u_{0,\epsilon}]_{\epsilon\in(0,1)}
	\doteq \{u_{0,\epsilon}\}_{\epsilon\in(0,1)}\mod\mathcal{N}^\mho(L^q(\mathbb{R}^n)).
\end{equation}
Similarly, the assumption \begin{equation}\label{fnet}
	f\in\mathcal{G}^\mho(C^\infty(\mathbb{R}_+,W^{q,\infty}(\mathbb{R}^n))\cap C([0,\infty), L^{q}(\mathbb{R}^{n}))),\end{equation} implies that
\begin{equation}\label{VWsourceterm}
	f=[f_{\epsilon}]_{\epsilon\in (0,1)}\doteq \{f_{\epsilon}\}_{\epsilon\in(0,1)}\mod\mathcal{N}^\mho(C^\infty(\mathbb{R}_+,L^q(\mathbb{R}^n))\cap C([0,\infty),L^{q}(\mathbb{R}^{n}))).
\end{equation}
Let	$u\doteq\mathrm{W}_{0,\natural}u_0+\int\limits_0^\natural\mathrm{W}_{\sharp,\natural}f$ which it leads us to $[u_{\epsilon}]_{\epsilon\in(0,1)}=\left[\mathrm{W}^{\epsilon}_{0,\natural}u_0+\int\limits_0^{\natural,\epsilon}\mathrm{W}_{\sharp,\natural}f\right]_{\epsilon\in(0,1)}$, while  $\mathrm{W}^{\epsilon}_{0,\natural}u_0\doteq\mathrm{W}_{0,\natural}u_{0,\epsilon}$ and $\int\limits_0^{\natural,\epsilon}\mathrm{W}_{\sharp,\natural}f\doteq\int\limits_0^{\natural}\mathrm{W}_{\sharp,\natural}f_{\epsilon}$, for every $\epsilon\in(0,1)$.
Now, since
$$
\partial_t:  C^\infty(\mathbb{R}_+,W^{q,\infty}(\mathbb{R}^n)) \longrightarrow  C^\infty(\mathbb{R}_+,W^{q,\infty}(\mathbb{R}^n)),
$$
and
$$
\mathcal{L}_t: C^\infty(\mathbb{R}_+,W^{q,\infty}(\mathbb{R}^n))\longrightarrow C^\infty(\mathbb{R}_+,W^{q,\infty}(\mathbb{R}^n)),$$
are linear and continuous operators, Corollary \ref{VWContCorr} ensures
the extension of the operators $\partial_{t}$ and $\mathcal{L}_t$  with formulae \eqref{VWDtDef} and \eqref{VWLtDef} respectively such that $\partial_t\in\mathcal{B}(\mathcal{G}^\mho(C^\infty(\mathbb{R}_+,W^{q,\infty}(\mathbb{R}^n))))$ and $\mathcal{L}_t\in\mathcal{B}(\mathcal{G}^\mho(C^\infty(\mathbb{R}_+,W^{q,\infty}(\mathbb{R}^n))))$.
Thus, it is immediately obtained that
\begin{equation}\label{VWdt}
	\mathcal{L}_tu=\{\mathcal{L}^{\epsilon}_tu_{\epsilon}\}_{\epsilon\in(0,1)}\mod\mathcal{N}^\mho(C^\infty(\mathbb{R}_+,W^{q,\infty}(\mathbb{R}^n))\cap C([0,\infty),L^{q}(\mathbb{R}^{n}))),
\end{equation} 
and 
\begin{equation}\label{VWLtx}
	\partial_{t}u=\{\partial_{t}u_{\epsilon}\}_{\epsilon\in(0,1)}\mod \mathcal{N}^\mho(C^\infty(\mathbb{R}_+,W^{q,\infty}(\mathbb{R}^n))\cap C([0,\infty),L^{q}(\mathbb{R}^{n}))),
\end{equation}
where $u=[u_{\epsilon}]_{\epsilon\in(0,1)}\in \mathcal{G}^\mho(C^\infty(\mathbb{R}_+,W^{q,\infty}(\mathbb{R}^n))\cap C([0,\infty),L^{q}(\mathbb{R}^{n})))$. Therefore, applying Proposition \ref{pr5} and \eqref{VWsourceterm}, we can see
\begin{align*}
	\partial_{t}u-\mathcal{L}_tu&=\{\partial_{t}u_{\epsilon}\}_{\epsilon\in(0,1)}-\{\mathcal{L}^{\epsilon}_tu_{\epsilon}\}_{\epsilon\in(0,1)}\mod\mathcal{N}^\mho(C^\infty(\mathbb{R}_+,W^{q,\infty}(\mathbb{R}^n))\cap C([0,\infty),L^{q}(\mathbb{R}^{n})))\\
	&=\{f_{\epsilon}\}_{\epsilon\in(0,1)}\mod\mathcal{N}^\mho(C^\infty(\mathbb{R}_+,W^{q,\infty}(\mathbb{R}^n))\cap C([0,\infty),L^{q}(\mathbb{R}^{n})))=f.
\end{align*}
To establish the non-negativity of the solution $u \in \mathcal{G}^\mho\big(C^\infty(\mathbb{R}_+, W^{q,\infty}(\mathbb{R}^n)) \cap C([0,+\infty), L^q(\mathbb{R}^n))\big)$, we proceed analogously to Proposition~\ref{pr3}. The solution is represented as
$$
u = \left[ \mathrm{W}_{0,\natural}^\epsilon u_{0,\epsilon} + \int_0^\natural \mathrm{W}_{\sharp,\natural}^\epsilon f_\epsilon \right]_{\epsilon \in (0,1)}.
$$
Under the assumptions $ u_{0,\epsilon} \geq 0$,  $f_\epsilon \geq 0$ for all $ \epsilon \in (0,1)$, and using the positivity of the heat kernel, we deduce that each $ u_\epsilon \geq 0$. Hence, the very weak solution $u$ is non-negative.
\end{proof}

The uniqueness of the very weak solution of the $L^q$-Cauchy problem takes the following form.

\begin{proposition}
For $q\in(1,+\infty)$, every solution
$$
u\in\mathcal{G}^\mho(C^\infty(\mathbb{R}_+,W^{q,\infty}(\mathbb{R}^n))\,\cap\,C([0,+\infty),L^q(\mathbb{R}^n))),
$$
of the Cauchy problem
$$
\begin{cases}
	\partial_tu-\mathcal{L}_tu=0,\\
	\left.u\right|_0=0.
\end{cases}
$$
is identically null, $u=0$.
\end{proposition}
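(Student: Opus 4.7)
The plan is to take an arbitrary representative $\{u_\epsilon\}_{\epsilon \in (0,1)}$ of $u$ and show that it is $\mho$-negligible in the target space, so that $u = 0$ in $\mathcal{G}^\mho$. Unpacking the very weak Cauchy problem produces $\mho$-negligible nets $v_\epsilon \in L^q(\mathbb{R}^n)$ and $f_\epsilon \in C^\infty(\mathbb{R}_+,W^{q,\infty}(\mathbb{R}^n)) \cap C([0,+\infty),L^q(\mathbb{R}^n))$ such that $\left.u_\epsilon\right|_0 = v_\epsilon$ and $\partial_t u_\epsilon - \langle\partial_x, a_\epsilon \partial_x\rangle u_\epsilon = f_\epsilon$ for all $\epsilon \in (0,1)$. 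Since $q \in (1,+\infty)$ and each $a_\epsilon$ satisfies the classical smoothness and positive-definiteness hypotheses, the uniqueness part of Proposition \ref{pr5} furnishes the Duhamel representation
$$u_\epsilon(t) = \mathrm{W}_{0,t}^\epsilon v_\epsilon + \int_0^t \mathrm{W}_{s,t}^\epsilon f_\epsilon(s)\,ds, \quad \forall\, t \in [0,+\infty).$$

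I would then estimate the two families of seminorms defining the topology of the target space separately. For the $\|\cdot\|_{q,b}$ seminorms of $C([0,+\infty),L^q(\mathbb{R}^n))$, the classical bound \eqref{normestimatesimple} immediately gives $\|u_\epsilon\|_{q,b} \leq \|v_\epsilon\|_q + b\,\|f_\epsilon\|_{q,b}$, which is $\mathfrak{o}[\varsigma_\ell(\epsilon)]_{\epsilon\to0}$ for every $\ell \in \Upsilon$ by the negligibility of $v_\epsilon$ and $f_\epsilon$. For the compact-interval seminorms $\|\cdot\|_{q,j,\ell,K}$ of $C^\infty(\mathbb{R}_+,W^{q,\infty}(\mathbb{R}^n))$, I would differentiate the Duhamel formula as in the computation \eqref{wqinfty}, producing an estimate of the shape
$$\|u_\epsilon\|_{q,j,\ell,K} \lesssim C^\epsilon_{0,j,\ell,K}\,\|v_\epsilon\|_q + j\,\|f_\epsilon\|_{q,j-1,\ell,K} + C^\epsilon_{j,\ell,K}\,\|f_\epsilon\|_{q,[\max K]+1},$$
where $C^\epsilon_{j,\ell,K} \doteq \sup_{t\in K}\int_0^t \|\partial_t^j \partial_x^\ell \mathcal{W}^\epsilon(\cdot;s,t)\|_1\,ds$ and $C^\epsilon_{0,j,\ell,K}\doteq \sup_{t\in K}\|\partial_t^j \partial_x^\ell \mathcal{W}^\epsilon(\cdot;0,t)\|_1$ are $\epsilon$-dependent kernel constants.

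The principal obstacle is the $\epsilon$-dependence of these kernel constants. Using Lemma \ref{upperboundderivative} on the Fourier side, together with the explicit Gaussian structure of $\mathcal{W}^\epsilon$, the mixed derivatives $\partial_t^j \partial_x^\ell \mathcal{W}^\epsilon(\cdot;s,t)$ are polynomial expressions in the entries of $a_\epsilon(\cdot)$ and $[A_\epsilon(t)-A_\epsilon(s)]^{-1}$ multiplying a fast-decaying Gaussian; their $L^1$-norms are therefore $\mho$-moderate as a consequence of the assumed moderateness of $\{a_\epsilon\}$, and so are $C^\epsilon_{j,\ell,K}$ and $C^\epsilon_{0,j,\ell,K}$. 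The standard fact that the product of an $\mho$-moderate net with an $\mho$-negligible one is again $\mho$-negligible (a direct consequence of the closure axiom \eqref{ScaleCond2} on the asymptotic scale) then forces each term in the above estimate to be $\mathfrak{o}[\varsigma_\ell(\epsilon)]$ for every $\ell \in \Upsilon$. Combining this with the $\|\cdot\|_{q,b}$-bound gives $\{u_\epsilon\}_{\epsilon \in (0,1)} \in \mathcal{N}^\mho$, hence $u = 0$ in $\mathcal{G}^\mho$, completing the proof.
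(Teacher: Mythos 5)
Your overall strategy is the textbook very weak uniqueness argument and is more scrupulous than the paper's own proof: you correctly unpack the hypotheses into nets $v_\epsilon=u_\epsilon|_0$ and $f_\epsilon=\partial_t u_\epsilon-\langle\partial_x,a_\epsilon\partial_x\rangle u_\epsilon$ that are only $\mho$-negligible (not zero), invoke the classical $L^q$-uniqueness of Proposition \ref{pr5} at each $\epsilon$ (which is indeed where $q\in(1,+\infty)$ enters) to get the Duhamel representation of $u_\epsilon$, and the $C([0,+\infty),L^q(\mathbb{R}^n))$-part of the estimate is sound, because by \eqref{normestimatesimple} the relevant operator norms are controlled by $\|\mathcal{W}^\epsilon(\cdot;s,t)\|_1=1$, uniformly in $\epsilon$.

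The gap is the step where you claim that the kernel constants $C^\epsilon_{j,\ell,K}=\sup_{t\in K}\int_0^t\|\partial_t^j\partial_x^\ell\mathcal{W}^\epsilon(\cdot;s,t)\|_1\,ds$ (and $C^\epsilon_{0,j,\ell,K}$) are $\mho$-moderate ``as a consequence of the assumed moderateness of $\{a_\epsilon\}$''. This does not follow. By the computations in Proposition \ref{pr1} and Lemma \ref{upperboundderivative}, the mixed derivatives of $\mathcal{W}^\epsilon$ are polynomials in the entries of $a_\epsilon$ \emph{and of} $[A_\epsilon(t)-A_\epsilon(s)]^{-1}$ times the Gaussian, normalized by $\det[A_\epsilon(t)-A_\epsilon(s)]^{-1/2}$; their $L^1$-norms are therefore governed by a \emph{lower} bound on $\lambda_{\min}(A_\epsilon(t)-A_\epsilon(s))$, about which moderateness of $\{a_\epsilon\}$ (sup-bounds on entries and their derivatives) says nothing. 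For instance, with $n=1$ and $a_\epsilon\equiv e^{-1/\epsilon}$ the net $\{a_\epsilon\}$ is moderate for any scale, yet $\|\partial_x\mathcal{W}^\epsilon(\cdot;s,t)\|_1\asymp e^{1/(2\epsilon)}(t-s)^{-1/2}$ is not moderate for the usual scale, and then ``moderate $\times$ negligible $=$ negligible'' via \eqref{ScaleCond2} cannot be applied. Closing this step requires an extra hypothesis (e.g.\ moderateness of $\{[A_\epsilon(t)-A_\epsilon(s)]^{-1}\}$ or a scale-controlled positive lower bound on $\lambda_{\min}(a_\epsilon)$) that the proposition does not assume, so negligibility of the $\|\cdot\|_{q,j,\ell,K}$ seminorms of $u_\epsilon$ is not established. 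Note that the paper's own proof takes a different and much shorter route that never meets this obstacle: it represents the solution through the solution maps of Proposition \ref{pr12} with the zero representatives $u_{0,\epsilon}=0$, $f_\epsilon=0$ and applies Proposition \ref{pr5} componentwise to conclude $u_\epsilon=0$ identically, so no $\epsilon$-dependent kernel bounds are needed; it is precisely your (more honest) reduction to merely negligible data that creates the need for moderate kernel constants, and that need cannot be met from the stated hypotheses.
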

\begin{proof}
With similar argument as in \eqref{VWdt} and \eqref{VWLtx} and by considering Proposition \ref{pr5} with the source term $f=0$ and initial value $u_{0}=0$ as well as \eqref{solution1}, one gets
\begin{align*}
	\partial_{t}u-\mathcal{L}_tu&=
	\{\partial_{t}u_{\epsilon}\}_{\epsilon\in(0,1)}-\{\mathcal{L}^{\epsilon}_tu_{\epsilon}\}_{\epsilon\in(0,1)}\mod\mathcal{N}^\mho(C^\infty(\mathbb{R}_+,W^{q,\infty}(\mathbb{R}^n))\cap C([0,\infty),L^{q}(\mathbb{R}^{n})))\\
	&=\{0\}_{\epsilon\in(0,1)}\mod\mathcal{N}^\mho(C^\infty(\mathbb{R}_+,W^{q,\infty}(\mathbb{R}^n))\cap C([0,\infty),L^{q}(\mathbb{R}^{n})))=[0]_{\epsilon\in(0,1)}.
\end{align*}
As discussed in Proposition \ref{pr12}, we know that the $L^{q}$ very weak solutions of the Cauchy problem in Proposition \ref{pr12} are as follows:
$$[u_{\epsilon}]_{\epsilon\in(0,1)}=\left[\mathrm{W}^{\epsilon}_{0,\natural}u_0+\int\limits_0^{\natural,\epsilon}\mathrm{W}_{\sharp,\natural}f\right]_{\epsilon\in(0,1)},$$ while  $\mathrm{W}^{\epsilon}_{0,\natural}u_0\doteq\mathrm{W}_{0,\natural}u_{0,\epsilon}$ and $\int\limits_0^{\natural,\epsilon}\mathrm{W}_{\sharp,\natural}f\doteq\int\limits_0^{\natural}\mathrm{W}_{\sharp,\natural}f_{\epsilon}$, for every $\epsilon\in(0,1)$. Assuming Proposition \ref{pr5} with the source term $f=0$ and initial value $u_{0}=0$, as well as \eqref{u0net} and \eqref{VWsourceterm}, it follows that
\begin{align}
	&[u_{\epsilon}]_{\epsilon\in(0,1)}=\left[\mathrm{W}^{\epsilon}_{0,\natural}u_0+\int\limits_0^{\natural,\epsilon}\mathrm{W}_{\sharp,\natural}f \right]_{\epsilon\in(0,1)}=\left[\mathrm{W}_{0,\natural}u^{\epsilon}_0+\int\limits_0^{\natural}\mathrm{W}_{\sharp,\natural}f_{\epsilon}\right]_{\epsilon\in(0,1)}\notag\\
	&=\{\mathrm{W}_{0,\natural}u_{0,\epsilon}\}_{\epsilon\in(0,1)}+\left\{\int\limits_0^{\natural}\mathrm{W}_{\sharp,\natural}f_{\epsilon}\right\}_{\epsilon\in(0,1)}\mod \mathcal{N}^\mho(C^\infty(\mathbb{R}_+,W^{q,\infty}(\mathbb{R}^n))\,\cap\,C([0,+\infty),L^q(\mathbb{R}^n))))\notag\\
	&=\{0\}_{\epsilon\in(0,1)} \mod \mathcal{N}^\mho(C^\infty(\mathbb{R}_+,W^{q,\infty}(\mathbb{R}^n))\,\cap\,C([0,+\infty),L^q(\mathbb{R}^n))))=[0]_{\epsilon\in(0,1)}.
\end{align}
Now, the proof is complete.
\end{proof}
\section*{Acknowledgements}
Z. Avetisyan and M. Ruzhansky acknowledge the support of  the Methusalem programme of the Ghent University Special Research Fund (BOF) (Grant number 01M01021). Z. Avetisyan's work is funded by the FWO Senior Research Grant G022821N.
Z. Keyshams and M. Mikaeili Nia acknowledge the support by the Higher Education and Science Committee, in the frames of the
project \textnumero~25FAST-1A006. Z. Keyshams acknowledges the support by the Higher Education and Science Committee of the Republic of Armenia, scientific project \textnumero~10-3/23/2PostDoc-2. M. Mikaeili Nia is grateful for the support from the Higher Education and Science Committee of the Republic of Armenia, scientific project \textnumero~10-3/23/2PostDoc-1. 


\begin{thebibliography}{99}

\bibitem{ARST21} A. Altybay, M. Ruzhansky, M. Sebih, N. Tokmagambetov
{\em The heat equation with strongly singular potentials}
Appl. Math. Comp. 399, 2021, pages 126006-1--126006-15.

\bibitem{ARST21a} A. Altybay, M. Ruzhansky, M. E. Sebih, N. Tokmagambetov
{\em Fractional Klein-Gordon equation with singular mass}
Chaos, Solitons and Fractals. 143, 2021, pages 110579-1--110579-6.

\bibitem{ARST21b} A. Altybay, M. Ruzhansky, M. E. Sebih, N. Tokmagambetov
{\em Fractional Schr\"{o}dinger equations with singular potentials of higher-order}
Rep. Math. Phys. 87, 2021, pages 129--144.

\bibitem{AvVe13} Z. Avetisyan, R. Verch
{\em Explicit harmonic and spectral analysis in Bianchi I-VII type cosmologies}
Class. Quant. Grav. 30 (15), 2013, pages 155006-1--155006-33.

\bibitem{ChRT22} M. Chatzakou, M. Ruzhansky,  N. Tokmagambetov
{\em Fractional Klein-Gordon equation with singular mass. II. hypoelliptic case}
Complex Var. Elliptic Equ. 67 (3), 2022, pages 615--632.

\bibitem{Evans} L. C. Evans,  Partial differential equations. Second edition. Graduate Studies in Mathematics, 19. American Mathematical Society, Providence, RI, 2010.

\bibitem{Gar05} C. Garetto, {\em Topological structures in Colombeau algebras: topological C-modules and duality theory}, Acta Appl Math. 88 (1), 2005, pages 81--123.



\bibitem{GaRu15} C. Garetto, M. Ruzhansky
{\em Hyperbolic second order equations with non-regular time dependent coefficients}
Arch. Ration. Mech. Anal. 217, 2015, pages 113--154.

\bibitem{Gar20} C. Garetto
{\em On the wave equation with multiplicities and space-dependent irregular coefficients} 
Trans. Amer. Math. Soc. 374, 2021, pages 3131--3176

\bibitem{javier}J. Duoandikoetxea, Fourier analysis, Graduate Studies in Mathematics, 29. American Mathematical Society, Providence, RI, 2001.

\bibitem{folland} G.B. Folland, Real Analysis: Modern Techniques and Their Applications, 2nd ed., John Wiley and Sons, Inc., New York, 1999.

\bibitem{Ruzhansky} M. Ruzhansky, V. Turunen, Pseudo-Differential Operators and Symmetries: Background Analysis and Advanced Topics, Birkhauser, Basel, 2010.

\bibitem{RY20} M. Ruzhansky, N. Yessirkegenov
{\em Very weak solutions to hypoelliptic wave equations} 
J. Differential Equations. 268 (5), 2020, pages 2063--2088.

\bibitem{Schwartz} L. Schwartz. {\em Sur limpossibilite de la multiplication des distributions} Comptes Rendus Hebdomadaires des Seances de L Academie des Sciences, 239 (15), 1954, pages 847--848.

\bibitem{Stra} W. A. Strauss, Partial differential equations. An introduction. 2nd ed., John Wiley and Sons, Ltd., Chichester, 2008.

\end{thebibliography}
\end{document}